\newcommand{\ph}[2]{{\left({#1}\right)}_{#2}}
\newcommand{\gf}[1]{\Gamma{\left({#1}\right)}}
\renewcommand*{\bar}{\overline}
\newcommand{\gfp}[1]{\Gamma_p{\left({#1}\right)}}
\newcommand{\biggfp}[1]{\Gamma_p{\bigl({#1}\bigr)}}
\def\imod#1{\allowbreak\mkern2mu({\operator@font mod}\,\,#1)}
\def\jmod#1{\allowbreak\mkern5mu({\operator@font mod}\,\,#1)}
\def\I{\mathrm{i}}
\theoremstyle{plain}
\newtheorem{theorem}{Theorem}[section]
\newtheorem{lemma}[theorem]{Lemma}
\newtheorem{prop}[theorem]{Proposition}
\newtheorem{cor}[theorem]{Corollary}
\theoremstyle{definition}
\newtheorem{defi}[theorem]{Definition}
\numberwithin{equation}{section}
\def\imod#1{\allowbreak\mkern5mu({\operator@font mod}\,\,#1)}
\begin{document}

\title[Splitting Hypergeometric Functions]{Splitting Hypergeometric Functions\\over Roots of Unity}

\author{Dermot M\lowercase{c}Carthy}
\address{Dermot M\lowercase{c}Carthy, Department of Mathematics \& Statistics\\
Texas Tech University\\
Lubbock, TX 79410-1042\\
USA}
\email{dermot.mccarthy@ttu.edu}

\author{Mohit Tripathi}
\address{Mohit Tripathi, Department of Mathematics \& Statistics\\
Texas Tech University\\
Lubbock, TX 79410-1042\\
USA}
\email{mohit.tripathi@ttu.edu}

\subjclass[2010]{11T24, 11S80, 33E50, 33C20, 11F11, 11G05}

\begin{abstract}
We examine hypergeometric functions in the finite field, $p$-adic and classical settings.
In each setting, we prove a formula which splits the hypergeometric function into a sum of lower order functions whose arguments differ by roots of unity.
We provide multiple applications of these results, including new reduction and summation formulas for finite field hypergeometric functions, along with classical analogues; evaluations of special values of these functions which apply in both the finite field and $p$-adic settings; and new relations to Fourier coefficients of modular forms. 
\end{abstract}

\maketitle


\section{Introduction}\label{sec_Intro}
Finite field hypergeometric functions were originally defined by Greene \cite{G, G2} as analogues of classical hypergeometric series.
Functions of this type were also introduced by Katz \cite{Ka} about the same time.
Greene's work includes an extensive catalogue of transformation and summation formulas for these functions, mirroring those in the classical case.
These functions also have a nice character sum representation and so the transformation and summation formulas can be interpreted as relations to simplify and evaluate complex character sums \cite{EG, GS}.
Using character sums to count the number of solutions to equations over finite fields is a long established practice \cite{DH, HL, Ko4, W} and finite field hypergeometric functions also naturally lend themselves to this endeavor \cite{AO, BKS, F2, FLRST, Go, K, L2, McC12, O}.
Following the modularity theorem, and, by then, known links between finite field hypergeometric functions and elliptic curves, many authors began examining links between these functions and Fourier coefficients of modular forms \cite{DM, DM2, E, FOP, FM, Ki, LTYZ, McC5, MP, M, P, RV}. More recently, finite field hypergeometric functions have played a central role in the theory of hypergeometric motives, which has led to increased interest in the functions and their properties \cite{AGLT, BCM, DKSSVW, RoRV}.

While hypergeometric functions over finite fields were originally defined by Greene, we will use a normalized version defined by the first author \cite{McC6, McC12}. Throughout, let $q=p^r$ be a prime power.  Let $\mathbb{F}_q$ be the finite field with $q$ elements, and let $\widehat{\mathbb{F}_q^*}$ be the group of multiplicative characters of $\mathbb{F}_q^*$.  We extend the domain of $\chi \in \widehat{\mathbb{F}^{*}_{q}}$ to $\mathbb{F}_{q}$ by defining $\chi(0):=0$ (including for the trivial character $\varepsilon$) and denote $\bar{\chi}$ as the inverse of $\chi$. We denote by $\varphi$ the character of order two in $\widehat{\mathbb{F}_q^*}$, when $q$ is odd. 
More generally, for ${k>2}$ a positive integer, we let $\chi_k\in\widehat{\mathbb{F}_q^*}$ denote a character of order $k$ when $q \equiv 1 \imod{k}$. 
Let $\theta$ be a fixed non-trivial additive character of $\mathbb{F}_q$, and for $\chi \in \widehat{\mathbb{F}^{*}_{q}}$ define the Gauss sum $g(\chi):= \sum_{x \in \mathbb{F}_q} \chi(x) \theta(x)$.
We define the finite field hypergeometric function as follows. 
\begin{defi}[\cite{McC6, McC12}]\label{def_F} 
For $A_1, A_2, \dotsc, A_m, B_1, B_2 \dotsc, B_m \in \widehat{\mathbb{F}_q^{*}}$ and $\lambda \in \mathbb{F}_q$,
\begin{equation*}\label{def_HypFnFF}
{_{m}F_{m}} {\biggl( \begin{array}{cccc} 
A_1, & A_2, & \dotsc, & A_m \\
B_1, & B_2, & \dotsc, & B_m \end{array}
\Big| \; \lambda \biggr)}_{q}\\
:= \frac{-1}{q-1}  \sum_{\chi \in \widehat{\mathbb{F}_q^{*}}}
\prod_{i=1}^{m} \frac{g(A_i \chi)}{g(A_i)} \frac{g(\bar{B_i \chi})}{g(\bar{B_i})}
 \chi(-1)^{m}
 \chi(\lambda).
\end{equation*}
\end{defi}

In recent years, the second author, with Barman and others, \cite{Mohit-1,TB-RNT-2018, TB-RNT-2020, TB-RMS-2021, BST} have developed the theory of the finite field Appell functions. They establish several product and summation formulas for these functions, which has also led to new relations for ${_{m}F_{m}}$, as defined above. 
One such relation can be derived from recent work of the second author and Meher \cite{TM}.
\begin{theorem}\cite{TM}\label{thm_Mohit}
For $A, C \in \widehat{\mathbb{F}_q^{*}}$ such that $A^2 \neq \varepsilon$ and $C^2 = \varepsilon$,
\begin{equation*}
{_{2}F_{2}} {\biggl( \begin{array}{cc} 
A^2, & \bar{A}^2 \\[1pt]
\varepsilon, & \varepsilon \end{array}
\Big| \; \lambda \biggr)}_{q}
+
{_{2}F_{2}} {\biggl( \begin{array}{cc} 
A^2, & \bar{A}^2 \\[1pt]
\varepsilon, & \varepsilon \end{array}
\Big| -\lambda \biggr)}_{q}
=
{_{4}F_{4}} {\biggl( \begin{array}{cccc} 
A, & \varphi A, & \bar{A}, & \varphi \bar{A}\\[1pt]
\varepsilon, & \varphi, & C, & \varphi C \end{array}
\Big| \; \lambda^2 \biggr)}_{q}.
\end{equation*}
\end{theorem}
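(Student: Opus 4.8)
The plan is to prove the identity by writing both sides as single sums of Gauss sums via Definition~\ref{def_F} and checking they agree termwise after a reindexing. I would first expand the left-hand side: since $\chi(-1)^{2}=1$, the $_2F_2$ at argument $\mu$ equals $\frac{-1}{q-1}\sum_{\chi}\frac{g(A^{2}\chi)\,g(\bar{A}^{2}\chi)\,g(\bar{\chi})^{2}}{g(A^{2})\,g(\bar{A}^{2})\,g(\varepsilon)^{2}}\,\chi(\mu)$. Taking $\mu=\lambda$ and $\mu=-\lambda$ and adding, the $\chi$-summand acquires a factor $1+\chi(-1)$, so it survives (doubled) exactly when $\chi(-1)=1$. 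Since $q$ is odd, $\chi(-1)=1$ precisely for the squares in $\widehat{\mathbb{F}_q^{*}}$, and $\psi\mapsto\psi^{2}$ is a two-to-one map of $\widehat{\mathbb{F}_q^{*}}$ onto that subgroup (kernel $\{\varepsilon,\varphi\}$); thus the doubling and the two-to-one count cancel, and with $\psi^{2}(\lambda)=\psi(\lambda^{2})$ the left-hand side becomes
\[
\frac{-1}{q-1}\sum_{\psi\in\widehat{\mathbb{F}_q^{*}}}\frac{g(A^{2}\psi^{2})\,g(\bar{A}^{2}\psi^{2})\,g(\bar{\psi}^{2})^{2}}{g(A^{2})\,g(\bar{A}^{2})\,g(\varepsilon)^{2}}\,\psi(\lambda^{2}).
\]

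Next I would expand the right-hand side. Because $\varphi^{2}=\varepsilon$ and (by hypothesis) $C^{2}=\varepsilon$, each bottom parameter $\varepsilon,\varphi,C,\varphi C$ is self-inverse, so the numerator of the $\psi$-th term, after using $\psi(-1)^{4}=1$, is
\[
g(A\psi)\,g(\varphi A\psi)\cdot g(\bar{A}\psi)\,g(\varphi\bar{A}\psi)\cdot g(\bar{\psi})\,g(\varphi\bar{\psi})\cdot g(C\bar{\psi})\,g(\varphi C\bar{\psi}),
\]
and the denominator is $g(A)\,g(\varphi A)\cdot g(\bar{A})\,g(\varphi\bar{A})\cdot g(\varepsilon)\,g(\varphi)\cdot g(C)\,g(\varphi C)$. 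Every factor here is a pair of shape $g(\rho)\,g(\varphi\rho)$, so the key move is the Hasse--Davenport product relation $g(\rho)\,g(\varphi\rho)=\bar{\rho}(4)\,g(\rho^{2})\,g(\varphi)$, applied to all eight pairs. The four $g(\varphi)$ factors upstairs cancel the four downstairs; the residual roots of unity $\bar{\rho}(4)$ multiply to $1$ because $A(4)\bar{A}(4)=1$, $\psi(4)\bar{\psi}(4)=1$, and $C(4)=C(2)^{2}=(C^{2})(2)=1$; and $C^{2}=\varepsilon$ reduces $g\bigl((C\bar{\psi})^{2}\bigr)$ to $g(\bar{\psi}^{2})$ and $g(C^{2})$ to $g(\varepsilon)$. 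What is left is precisely the displayed expression for the left-hand side, which proves the theorem.

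Both expansions are mechanical; the one step demanding genuine care is the constant bookkeeping following the Hasse--Davenport substitutions, and that is exactly where the hypotheses enter: $q$ must be odd so that $\varphi$ exists and $2,4$ are invertible, and $C^{2}=\varepsilon$ is what yields $\bar{C}=C$, $C(4)=1$, and the collapse of the two $C$-pairs. The remaining hypothesis $A^{2}\neq\varepsilon$ is inherited from \cite{TM} and is not needed for this argument; it merely prevents the $_4F_4$ on the right from degenerating to a function of lower order.
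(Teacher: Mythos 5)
Your proposal is correct and follows essentially the same route as the paper: the paper obtains Theorem \ref{thm_Mohit} as the $n=2$, $m=2$ case of Theorem \ref{thm_F_Main} (via Corollary \ref{cor_F_n=2}), whose proof likewise combines the $\pm1$ orthogonality (your factor $1+\chi(-1)$, which is Proposition \ref{prop_Orth} with $\zeta_2=-1$), the two-to-one reindexing $\chi=\psi^2$, and the Hasse--Davenport product formula with the $\bar{\rho}(4)$ constants cancelling. The only cosmetic difference is that you apply Hasse--Davenport to collapse the ${}_4F_4$ side downward rather than to build it up from the ${}_2F_2$ side, and your observation that $A^2\neq\varepsilon$ is not needed agrees with the paper's unrestricted Corollary \ref{cor_F_n=2}.
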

\noindent
The main purpose of this paper is to generalize Theorem \ref{thm_Mohit} in multiple directions, using more direct methods. 
Specifically, we generalize Theorem \ref{thm_Mohit} from $_2F_2$ to $_mF_m$ for any $m$, allowing any character arguments, without restrictions, where the sum is over all roots of unity, not just $\pm1$, times $\lambda$.
We then extend these results in the $p$-adic setting.
We also prove a classical hypergeometric series analogue.
All these results will be outlined in Section \ref{sec_Results}.
In Section \ref{sec_Apps}, we will provide multiple applications of these generalized results, including new reduction and summation formulas for finite field hypergeometric functions, along with classical analogues; evaluations of special values of these functions which apply in both the finite field and $p$-adic settings; and new relations to Fourier coefficients of modular forms. 
After outlining some preliminaries in Section \ref{sec_Prelim}, we will prove our main results in Section \ref{sec_Proofs}.
Section \ref{sec_GF} contains a comprehensive discussion on the relationship between our finite field and $p$-adic hypergeometric functions, and, in particular, how this relationship is affected if the parameters, or some subset thereof, are ``\emph{defined over $\mathbb{Q}$}''. 


\section{Statement of Main Results}\label{sec_Results}
\subsection{Finite Field Setting}\label{subsec_Res_FF}
Our first result generalizes Theorem \ref{thm_Mohit} from $_2F_2$ to $_mF_m$ for any $m$, allowing any character arguments, without restrictions, where the sum is over all roots of unity, not just $\pm1$, times $\lambda$. 
\begin{theorem}\label{thm_F_Main}
Let $n$ be a positive integer and let $q \equiv 1 \imod n$ be a prime power. Let $\zeta_n$ be a primitive $n$-th root of unity in $\mathbb{F}_q^{*}$ and let $\chi_n \in \widehat{\mathbb{F}_q^{*}}$ denote a character of order $n$. Then for $A_1, A_2, \dotsc, A_m, B_1, B_2 \dotsc, B_m \in \widehat{\mathbb{F}_q^{*}}$,
\begin{multline*}
\sum_{l=0}^{n-1}
{_{m}F_{m}} {\biggl( \begin{array}{cccc} A_1^n, & A_2^n, & \dotsc, & A_m^n \\[1pt]
 B_1^n, & B_2^n, & \dotsc, & B_m^n \end{array}
\Big| \; \zeta_n^l \cdot \lambda \biggr)}_{q}\\
=
{_{nm}F_{nm}} {\biggl( \begin{array}{c} 
A_i \,\chi_n^l \, : 1 \leq i \leq m, 0 \leq l \leq n-1  \\[1pt]
B_i \, \chi_n^l  \, : 1 \leq i \leq m, 0 \leq l \leq n-1 \end{array}
\Big| \; \lambda^n \biggr)}_{q}.
\end{multline*}
\end{theorem}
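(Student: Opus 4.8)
The plan is to work directly from the character-sum definition and exploit the classical evaluation of the sum $\sum_{l=0}^{n-1}\chi_n^{lk}$ over a complete residue system, which detects the condition $n\mid k$. First I would expand the left-hand side: each summand ${_{m}F_{m}}(\dotsm\mid \zeta_n^l\lambda)_q$ is $\frac{-1}{q-1}\sum_{\chi}\prod_i \frac{g(A_i^n\chi)}{g(A_i^n)}\frac{g(\bar B_i^n\bar\chi)}{g(\bar B_i^n)}\chi(-1)^m\chi(\zeta_n^l\lambda)$. Summing over $l$ from $0$ to $n-1$, the only $l$-dependence is in the factor $\chi(\zeta_n^l)=\chi(\zeta_n)^l$, so $\sum_{l=0}^{n-1}\chi(\zeta_n)^l$ equals $n$ if $\chi(\zeta_n)=1$ and $0$ otherwise. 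Now $\chi(\zeta_n)=1$ precisely when $\chi$ is an $n$-th power of a character (equivalently $\chi$ is trivial on the subgroup $\langle\zeta_n\rangle$), i.e.\ $\chi=\psi^n$ for some $\psi\in\widehat{\mathbb F_q^*}$; moreover each such $\chi$ has exactly $n$ preimages $\psi$ under $\psi\mapsto\psi^n$. Thus the LHS becomes $\frac{-1}{q-1}\sum_{\psi}\prod_i \frac{g(A_i^n\psi^n)}{g(A_i^n)}\frac{g(\bar B_i^n\bar\psi^n)}{g(\bar B_i^n)}\psi^n(-1)^m\psi^n(\lambda)$, where now $\psi$ ranges over all of $\widehat{\mathbb F_q^*}$ (the factor $n$ from the inner sum cancels the $n$-fold multiplicity of the substitution $\chi=\psi^n$).

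Next I would convert each Gauss-sum ratio via the Hasse--Davenport product relation. The key identity is
\[
g(\psi^n)\;=\;\frac{g(\psi)\,g(\psi\chi_n)\,g(\psi\chi_n^2)\cdots g(\psi\chi_n^{n-1})}{g(\chi_n)\,g(\chi_n^2)\cdots g(\chi_n^{n-1})}\,\psi^{n}(n)^{-1}\quad\text{(up to the standard normalizing constant)},
\]
and applying it with $\psi$ replaced by $A_i\psi$ (so $(A_i\psi)^n=A_i^n\psi^n$) gives $g(A_i^n\psi^n)$ as a product of $g(A_i\psi\chi_n^l)$ for $0\le l\le n-1$, divided by the $\psi$-independent constant $\prod_{l=1}^{n-1}g(\chi_n^l)$ and a power-of-$n$ factor; similarly the denominator $g(A_i^n)$ expands with $\psi=\varepsilon$. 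The crucial bookkeeping point is that \emph{all} the $\psi$-independent constants — the $\prod g(\chi_n^l)$'s and the power-of-$n$ Hasse--Davenport factors — appear identically in numerator and denominator across the $A_i$'s and (with inverses/conjugates) across the $\bar B_i$'s, and the powers of $n$ telescope using $\prod_{l}(A_i\chi_n^l)(n)=A_i^n(n)=(A_i(n))^n$. After cancellation, $\prod_i \frac{g(A_i^n\psi^n)}{g(A_i^n)}\frac{g(\bar B_i^n\bar\psi^n)}{g(\bar B_i^n)}$ becomes exactly $\prod_{i=1}^m\prod_{l=0}^{n-1}\frac{g(A_i\chi_n^l\psi)}{g(A_i\chi_n^l)}\frac{g(\overline{B_i\chi_n^l\psi})}{g(\overline{B_i\chi_n^l})}$.

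Finally I would reassemble: replacing $\psi^n(-1)^m$ by $\psi(-1)^{nm}$ and $\psi^n(\lambda)$ by $\psi(\lambda^n)$, the expression $\frac{-1}{q-1}\sum_{\psi}\bigl[\prod_{i,l}\tfrac{g(A_i\chi_n^l\psi)}{g(A_i\chi_n^l)}\tfrac{g(\overline{B_i\chi_n^l\psi})}{g(\overline{B_i\chi_n^l})}\bigr]\psi(-1)^{nm}\psi(\lambda^n)$ is, by Definition \ref{def_F} with $nm$ upper and $nm$ lower parameters $\{A_i\chi_n^l\}$ and $\{B_i\chi_n^l\}$, precisely the right-hand side ${_{nm}F_{nm}}(\dotsm\mid\lambda^n)_q$. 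I expect the main obstacle to be the careful treatment of the Hasse--Davenport product formula: pinning down the exact normalizing constants (including the $g(\chi_n^l)$ products, the $\psi(n)$-type factors, and any sign/$q$ factors in the chosen normalization of $g$) and verifying that every $\psi$-independent factor truly cancels between numerator and denominator. A secondary subtlety is the degenerate behavior of Gauss sums at the trivial character (where $g(\varepsilon)=-1$ in this normalization and $\chi(0)=0$ conventions interact with $\lambda=0$ or $\zeta_n^l\lambda=0$); I would handle $\lambda=0$ as a trivial separate case and otherwise note that the Gauss-sum identities hold uniformly because the potentially problematic factors cancel in the ratios.
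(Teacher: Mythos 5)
Your proposal is correct and follows essentially the same route as the paper: expand by Definition \ref{def_F}, use the root-of-unity orthogonality relation (Proposition \ref{prop_Orth}) to restrict to characters $\chi=\psi^n$ with the factor $n$ absorbed by the $n$-fold multiplicity of $\psi\mapsto\psi^n$, and then apply the Hasse--Davenport product formula (Theorem \ref{thm_HD}) to split each Gauss-sum ratio. One small correction to your bookkeeping: the Hasse--Davenport factors $\psi^{n}(n)$ surviving in the ratios $g(A_i^n\psi^n)/g(A_i^n)$ are not $\psi$-independent; they cancel because each upper-parameter ratio contributes $\psi^{n}(n)$ while each lower-parameter ratio contributes $\bar{\psi}^{n}(n)$, and there are equally many ($m$) of each.
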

\noindent
We also have the following converse result.
\begin{theorem}\label{thm_F_Main_Converse}
Let $n$ be a positive integer and let $q \equiv 1 \imod n$ be a prime power. Let $\chi_n \in \widehat{\mathbb{F}_q^{*}}$ denote a character of order $n$. Let $A_1, A_2, \dotsc, A_m, B_1, B_2 \dotsc, B_m \in \widehat{\mathbb{F}_q^{*}}$. If $\lambda \in \mathbb{F}_q$ is not an $n$-th power, then
\begin{equation*}
{_{nm}F_{nm}} {\biggl( \begin{array}{c} 
A_i \,\chi_n^l \, : 1 \leq i \leq m, 0 \leq l \leq n-1  \\[1pt]
B_i \, \chi_n^l  \, : 1 \leq i \leq m, 0 \leq l \leq n-1 \end{array}
\Big| \; \lambda \biggr)}_{q}=
0.
\end{equation*}
\end{theorem}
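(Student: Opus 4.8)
The plan is to argue directly from Definition~\ref{def_F}, exploiting the cyclic symmetry that is built into the parameter list $\{A_i\chi_n^l\}$, $\{B_i\chi_n^l\}$. Unwinding the definition, the quantity in question equals
\[
\frac{-1}{q-1}\sum_{\chi\in\widehat{\mathbb{F}_q^*}} c(\chi)\,\chi(\lambda),
\qquad
c(\chi):=\chi(-1)^{nm}\prod_{i=1}^{m}\prod_{l=0}^{n-1}\frac{g(A_i\chi_n^l\chi)}{g(A_i\chi_n^l)}\cdot\frac{g(\bar{B_i\chi_n^l\chi})}{g(\bar{B_i\chi_n^l})}.
\]
Since every Gauss sum is nonzero ($g(\varepsilon)=-1$, and $|g(\chi)|=\sqrt q$ otherwise), $c(\chi)$ is a well-defined nonzero complex number for each $\chi$, so there are no issues with the denominators.

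First I would show that $c$ is invariant under the substitution $\chi\mapsto\chi\chi_n$. Because $\chi_n^n=\varepsilon$, multiplying $\chi$ by $\chi_n$ merely advances the index $l$ cyclically in each inner product $\prod_{l=0}^{n-1}g(A_i\chi_n^l\chi)$ and $\prod_{l=0}^{n-1}g(\bar{B_i\chi_n^l\chi})$, leaving these products unchanged, while the denominators $\prod_{l}g(A_i\chi_n^l)$ and $\prod_{l}g(\bar{B_i\chi_n^l})$ do not involve $\chi$ at all. The only remaining factor is the sign $\chi(-1)^{nm}$, for which I would check that $\chi_n(-1)^{nm}=1$: the value $\chi_n(-1)$ has order dividing $\gcd(2,n)$, so $\chi_n(-1)=1$ when $n$ is odd, while $\chi_n(-1)=\pm1$ with $nm$ even when $n$ is even (the case of even $q$ being trivial). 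Hence $c(\chi\chi_n)=c(\chi)$ for all $\chi$.

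Granting this, I would reindex the sum via the bijection $\chi\mapsto\chi\chi_n$ of $\widehat{\mathbb{F}_q^*}$:
\[
\sum_{\chi} c(\chi)\,\chi(\lambda)
=\sum_{\chi} c(\chi\chi_n)\,(\chi\chi_n)(\lambda)
=\chi_n(\lambda)\sum_{\chi} c(\chi)\,\chi(\lambda),
\]
so that $\bigl(1-\chi_n(\lambda)\bigr)\,{}_{nm}F_{nm}(\cdots\mid\lambda)_q=0$. Since $\chi_n$ has exact order $n$, its kernel is precisely the subgroup $(\mathbb{F}_q^*)^n$ of $n$-th powers; as $\lambda$ is not an $n$-th power (in particular $\lambda\neq0$), we get $\chi_n(\lambda)\neq1$, and dividing by $1-\chi_n(\lambda)$ gives the claim. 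There is no serious obstacle here; the two points needing a moment's care are the identity $\chi_n(-1)^{nm}=1$ (so that the sign character does not break the $\chi_n$-invariance of $c$) and the translation of ``$\lambda$ is not an $n$-th power'' into ``$\chi_n(\lambda)\neq1$''. Equivalently, the argument can be packaged more conceptually: $c$ is constant on the cosets of $\langle\chi_n\rangle$, so the hypergeometric value is a linear combination of the character sums $\sum_{l=0}^{n-1}\chi_n^l(\lambda)$, each of which equals $n$ or $0$ according as $\lambda$ is or is not an $n$-th power — this is the same coset-grouping idea that underlies Theorem~\ref{thm_F_Main}.
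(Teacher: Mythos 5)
Your proposal is correct and follows essentially the same route as the paper's own proof: both expand via Definition~\ref{def_F}, exploit the invariance of the Gauss-sum products under the substitution $\chi\mapsto\chi\chi_n$ to obtain ${_{nm}F_{nm}}(\cdots\mid\lambda)_q=\chi_n(\lambda)\cdot{_{nm}F_{nm}}(\cdots\mid\lambda)_q$, and conclude from $\chi_n(\lambda)\neq1$ when $\lambda$ is not an $n$-th power. The only difference is that you spell out the details the paper leaves implicit (the cyclic shift of the $l$-index and the check that $\chi_n(-1)^{nm}=1$), both of which you verify correctly.
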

\noindent
When $n=2$, the following corollaries are immediate.
\begin{cor}\label{cor_F_n=2}
For $q$ an odd prime power,
\begin{multline*}
{_{m}F_{m}} {\biggl( \begin{array}{cccc} 
A_1^2, & A_2^2, & \dotsc, & A_m^2 \\[1pt]
B_1^2, & B_2^2, & \dotsc, & B_m^2 \end{array}
\Big| \; \lambda \biggr)}_{q}
+
{_{m}F_{m}} {\biggl( \begin{array}{cccc}
A_1^2, & A_2^2, & \dotsc, & A_m^2 \\[1pt]
B_1^2, & B_2^2, & \dotsc, & B_m^2 \end{array}
\Big|  -\lambda \biggr)}_{q}\\
=
{_{2m}F_{2m}} {\biggl( \begin{array}{ccccccc} 
A_1, & \varphi A_1, & A_2, & \varphi A_2, & \dotsc, & A_m, & \varphi A_m\\[1pt]
B_1, & \varphi B_1, & B_2, & \varphi B_2, & \dotsc, & B_m, & \varphi B_m \end{array}
\Big| \; \lambda^2 \biggr)}_{q}.
\end{multline*}
\end{cor}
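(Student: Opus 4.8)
The plan is to obtain Corollary~\ref{cor_F_n=2} as the special case $n=2$ of Theorem~\ref{thm_F_Main}, so essentially all the work is already done. First I would observe that the hypothesis $q \equiv 1 \imod 2$ in Theorem~\ref{thm_F_Main} is precisely the statement that $q$ is odd, matching the hypothesis of the corollary. Next, since $\mathbb{F}_q^*$ is cyclic of even order it contains a \emph{unique} element of order two, namely $-1$; hence the primitive second root of unity $\zeta_2$ of Theorem~\ref{thm_F_Main} is forced to be $-1$. Likewise $\widehat{\mathbb{F}_q^*}$ contains a unique character of order two, namely $\varphi$, so the choice $\chi_2 = \varphi$ is forced as well. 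Thus the apparent ambiguity in the choices of $\zeta_n$ and $\chi_n$ disappears when $n=2$.

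With these identifications in hand, I would rewrite both sides of Theorem~\ref{thm_F_Main} with $m$, $A_i$, $B_i$ in place of their $n$-th powers and with $n=2$. The left-hand side becomes the sum over $l \in \{0,1\}$ of ${_mF_m}$ with numerator parameters $A_1^2,\dots,A_m^2$ and denominator parameters $B_1^2,\dots,B_m^2$, evaluated at $\zeta_2^{\,0}\lambda = \lambda$ and $\zeta_2^{\,1}\lambda = -\lambda$ — exactly the left-hand side of the corollary. On the right-hand side, the numerator parameter list $\{A_i\,\chi_2^l : 1\le i\le m,\ 0\le l\le 1\}$ equals $\{A_i,\ \varphi A_i : 1\le i\le m\}$, and similarly for the denominator, while $\lambda^n = \lambda^2$.

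The one bookkeeping point I would address explicitly is that the corollary displays these $2m$ parameters in the interleaved order $A_1,\varphi A_1,A_2,\varphi A_2,\dots,A_m,\varphi A_m$, whereas Theorem~\ref{thm_F_Main} produces them grouped as $A_1,A_1\varphi,\dots$; but ${_{2m}F_{2m}}$ is unchanged under permuting its numerator parameters among themselves and its denominator parameters among themselves, as is immediate from Definition~\ref{def_F} since there the product over $i$ is over an unordered index set. Hence the two right-hand sides coincide and the corollary follows. I do not anticipate any genuine obstacle: the mathematical content lies entirely in Theorem~\ref{thm_F_Main}, and the proof of the corollary is just the record of the substitution $n=2$ together with the two uniqueness observations above.
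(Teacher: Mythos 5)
Your proposal is correct and matches the paper, which derives Corollary \ref{cor_F_n=2} as the immediate specialization $n=2$ of Theorem \ref{thm_F_Main}, with $\zeta_2=-1$, $\chi_2=\varphi$, and the harmless reordering of parameters exactly as you describe.
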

\noindent
Taking $m=2$, $A_1=A, A_2=\bar{A}, B_1=B_2=\varepsilon$ in Corollary \ref{cor_F_n=2} recovers Theorem \ref{thm_Mohit}.
\begin{cor}\label{cor_F_n=2_Converse}
Let $q$ be an odd prime power. If $\lambda \in \mathbb{F}_q$ is not a square, then
\begin{equation*}
{_{2m}F_{2m}} {\biggl( \begin{array}{ccccccc} 
A_1, & \varphi A_1, & A_2, & \varphi A_2, & \dotsc, & A_m, & \varphi A_m\\[1pt]
B_1, & \varphi B_1, & B_2, & \varphi B_2, & \dotsc, & B_m, & \varphi B_m \end{array}
\Big| \;\lambda \biggr)}_{q}
=0.
\end{equation*}
\end{cor}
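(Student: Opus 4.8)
The quickest route is to observe that the corollary is precisely the $n=2$ case of Theorem~\ref{thm_F_Main_Converse}. The plan is therefore simply to specialize: taking $n=2$, the congruence $q\equiv 1\imod{2}$ is the assumption that $q$ is odd, a character of order $2$ is $\varphi$, and the condition that $\lambda$ is not an $n$-th power is exactly the condition that $\lambda$ is a non-square. With these identifications the numerator parameter multiset $\{A_i\,\chi_2^l:1\le i\le m,\ 0\le l\le 1\}$ becomes $A_1,\varphi A_1,\dots,A_m,\varphi A_m$, and likewise for the $B_i$, matching the statement verbatim. So the corollary follows at once.

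If instead one wants a self-contained proof --- which is just the $n=2$ instance of the argument behind Theorem~\ref{thm_F_Main_Converse} --- I would expand the left-hand side $S$ using Definition~\ref{def_F} and note that the sign factor $\chi(-1)^{2m}$ equals $1$, giving
\[
S=\frac{-1}{q-1}\sum_{\chi\in\widehat{\mathbb{F}_q^{*}}}\;
\prod_{i=1}^{m}\frac{g(A_i\chi)\,g(\varphi A_i\chi)}{g(A_i)\,g(\varphi A_i)}\cdot\frac{g(\overline{B_i\chi})\,g(\overline{\varphi B_i\chi})}{g(\overline{B_i})\,g(\overline{\varphi B_i})}\cdot\chi(\lambda).
\]
I would then substitute $\chi\mapsto\varphi\chi$, which is a bijection of $\widehat{\mathbb{F}_q^{*}}$. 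Because $\varphi^2=\varepsilon$, this substitution swaps $g(A_i\chi)$ with $g(\varphi A_i\chi)$ and $g(\overline{B_i\chi})$ with $g(\overline{\varphi B_i\chi})$ for every $i$, so the entire product of Gauss-sum quotients is unchanged and the normalizing constants do not involve $\chi$; the only net effect is $\chi(\lambda)\mapsto\varphi(\lambda)\,\chi(\lambda)$. Since $\lambda$ is a non-square, $\varphi(\lambda)=-1$, so the substitution carries $S$ to $-S$, whence $S=0$.

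I do not anticipate any real obstacle. The only points that warrant a moment's care are that the product of Gauss-sum quotients must be invariant \emph{as a whole} --- which works because $A_i$ is paired with $\varphi A_i$ for the \emph{same} index $i$, exactly as the parameters are arranged --- and the degenerate case $\lambda=0$, which does not occur here because $0=0^2$ is a square (and in any event $\chi(0)=0$ would kill every term).
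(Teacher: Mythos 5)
Your proposal is correct and matches the paper exactly: the paper obtains Corollary~\ref{cor_F_n=2_Converse} as the immediate $n=2$ specialization of Theorem~\ref{thm_F_Main_Converse}, which is your primary route. Your self-contained argument (the substitution $\chi\mapsto\varphi\chi$ forcing $S=\varphi(\lambda)S=-S$) is also just the $n=2$ instance of the paper's own proof of that theorem, so there is nothing genuinely different to compare.
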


Let $\mathbb{F}_q^{*} = \langle T \rangle$ and $A_i, B_i \in \widehat{\mathbb{F}_q^{*}}$. 
Then $A_i = T^{a_i (q-1)}$ and $B_i = T^{b_i(q-1)}$ for some $a_i, b_i \in \mathbb{Q}$, such that $a_i (q-1), b_i (q-1) \in \mathbb{Z}$. 
Many applications of $_mF_m(\{A_i\}; \{B_i\} \mid \lambda)_q$ require fixed $a_i, b_i$. 
For fixed $a_i, b_i$, if we consider $_mF_m(\{A_i\}; \{B_i\} \mid \lambda)_q$ to be a function of $q$, then the domain of this function is all $q \equiv 1 \imod d$, where $d$ is the least common denominator of of the elements in $\{a_i\} \cup \{b_i\}$. The first author has defined a function in the $p$-adic setting which extends $_mF_m(\{A_i\}; \{B_i\} \mid \lambda)_q$, and whose domain is all $q$ relatively prime to $d$. We use this function to extend the domain of all results stated above.


\subsection{$p$-adic Setting}\label{subsec_Res_padic}

Let $\mathbb{Z}_p$ denote the ring of $p$-adic integers and let $\gfp{\cdot}$ denote Morita's $p$-adic gamma function. Let $\omega$ denote the Teichm\"{u}ller character of $\mathbb{F}_q$ (see Section \ref{sec_Prelim} for full details) with $\bar{\omega}$ denoting its character inverse. 
For $x \in \mathbb{Q}$, we let  $\left\lfloor x \right\rfloor$ denote the greatest integer less than or equal to $x$ and $\langle x \rangle$ denote the fractional part of $x$, i.e. $\langle x \rangle = x- \left\lfloor x \right\rfloor$.

\begin{defi}\cite[Definition 5.1]{McC7}\label{def_Gq}
Let $q=p^r$ for $p$ an odd prime. Let $\lambda \in \mathbb{F}_q$, $m \in \mathbb{Z}^{+}$ and $a_i, b_i \in \mathbb{Q} \cap \mathbb{Z}_p$, for $1 \leq i \leq m$.
Then define  
\begin{multline*}
{_{m}G_{m}}
\biggl[ \begin{array}{cccc} 
a_1, & a_2, & \dotsc, & a_m \\[2pt]
b_1, & b_2, & \dotsc, & b_m \end{array}
\Big| \; \lambda \; \biggr]_q
: = \frac{-1}{q-1}  \sum_{j=0}^{q-2} 
(-1)^{jm}\;
\bar{\omega}^j(\lambda)\\
\times 
\prod_{i=1}^{m} 
\prod_{k=0}^{r-1} 
\frac{\biggfp{\langle (a_i -\frac{j}{q-1} )p^k \rangle}}{\biggfp{\langle a_i p^k \rangle}}
\frac{\biggfp{\langle (-b_i +\frac{j}{q-1}) p^k \rangle}}{\biggfp{\langle -b_i p^k\rangle}}
(-p)^{-\lfloor{\langle a_i p^k \rangle -\frac{j p^k}{q-1}}\rfloor -\lfloor{\langle -b_i p^k\rangle +\frac{j p^k}{q-1}}\rfloor}.
\end{multline*}
\end{defi}
\noindent
We note that the value of ${_{m}G_{m}}[\cdots]$ depends only on the fractional part of the $a_i$ and $b_i$ parameters, and is invariant if we change the order of the parameters. For fixed  parameters $\{a_i\}$ and $\{b_i\}$, ${_{m}G_{m}}[\cdots]_q$ is defined at any odd prime power $q=p^r$ where $p$ is relatively prime to the denominators of the $a_i$'s and $b_i$'s, i.e., all $a_i, b_i \in \mathbb{Z}_p$.

${_{m}G_{m}}[\cdots]_q$ extends $_mF_m(\cdots)_q$ via the following relation.
\begin{lemma}[c.f. \cite{McC7} Lemma 3.3]\label{lem_G_to_F}
For an odd prime power $q$, let $A_i, B_i \in \widehat{\mathbb{F}_q^{*}}$ be given by $\bar{\omega}^{a_i(q-1)}$ and $\bar{\omega}^{b_i(q-1)}$ respectively, where $\omega$ is the Teichm\"{u}ller character of $\mathbb{F}_q$. Then
\begin{equation*}
{_{m}F_{m}} {\biggl( \begin{array}{cccc} A_1, & A_2, & \dotsc, & A_m \\
 B_1, & B_2, & \dotsc, & B_m \end{array}
\Big| \; \lambda \biggr)}_{q}
=
{_{m}G_{m}}
\biggl[ \begin{array}{cccc} a_1, & a_2, & \dotsc, & a_m \\
 b_1, & b_2, & \dotsc, & b_m \end{array}
\Big| \; \lambda^{-1} \; \biggr]_q.
\end{equation*}
\end{lemma}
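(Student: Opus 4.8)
The plan is to unfold ${}_{m}F_{m}$ via Definition \ref{def_F}, rewrite each Gauss sum using the Gross--Koblitz formula, and verify that the result matches the right-hand side of Definition \ref{def_Gq} term by term with $\lambda$ replaced by $\lambda^{-1}$; this is in essence the computation behind \cite[Lemma 3.3]{McC7}. Since ${}_{m}F_{m}$ is independent of the choice of non-trivial additive character, I may take $\theta$ to be the one compatible with a fixed root $\pi$ of $x^{p-1}+p=0$. As $\omega$ generates $\widehat{\mathbb{F}_q^{*}}$, I would reindex the sum in Definition \ref{def_F} by $\chi=\omega^{j}$, $0\le j\le q-2$. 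Since $\omega(-1)=-1$ for odd $p$, this gives $\chi(-1)^{m}=(-1)^{jm}$ and $\chi(\lambda)=\omega^{j}(\lambda)=\bar\omega^{j}(\lambda^{-1})$, i.e., exactly the factors $(-1)^{jm}$ and $\bar\omega^{j}(\lambda^{-1})$ in ${}_{m}G_{m}[\,\cdots\mid\lambda^{-1}\,]_q$. Moreover, with $A_i=\bar\omega^{a_i(q-1)}$ and $B_i=\bar\omega^{b_i(q-1)}$, one has $A_i\chi=\bar\omega^{(a_i-j/(q-1))(q-1)}$, $\overline{B_i\chi}=\bar\omega^{(-b_i+j/(q-1))(q-1)}$ and $\overline{B_i}=\bar\omega^{-b_i(q-1)}$, whose Teichm\"{u}ller exponents are precisely the parameters appearing inside the $\Gamma_p$'s of Definition \ref{def_Gq}.

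Next I would apply the Gross--Koblitz formula in the form $g(\bar\omega^{N})=-\pi^{(p-1)\sum_{k=0}^{r-1}\langle p^{k}N/(q-1)\rangle}\prod_{k=0}^{r-1}\gfp{\langle p^{k}N/(q-1)\rangle}$, valid for every integer $N$ (the $\pi$-exponent being the base-$p$ digit sum of $N$ reduced modulo $q-1$; the degenerate case $N\equiv 0$ is included because $\gfp{0}=1$). Inserting this into the $2m$ numerator and $2m$ denominator Gauss sums, the $2m$ leading signs $-1$ cancel, the $\Gamma_p$-products become exactly those of Definition \ref{def_Gq}, and it remains to collect the powers of $\pi$. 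Using $\langle z\rangle=z-\lfloor z\rfloor$ and $\sum_{k=0}^{r-1}p^{k}=(q-1)/(p-1)$, the ratio $g(A_i\chi)/g(A_i)$ contributes $\pi$-exponent $-j+(p-1)\sum_{k}\bigl(\lfloor a_i p^{k}\rfloor-\lfloor(a_i-j/(q-1))p^{k}\rfloor\bigr)$, while $g(\overline{B_i\chi})/g(\overline{B_i})$ contributes $j+(p-1)\sum_{k}\bigl(\lfloor -b_i p^{k}\rfloor-\lfloor(-b_i+j/(q-1))p^{k}\rfloor\bigr)$. Summing over $1\le i\le m$, the terms $\pm mj$ cancel, and the remaining exponent is $p-1$ times $\sum_{i,k}\bigl(\lfloor a_i p^{k}\rfloor-\lfloor(a_i-j/(q-1))p^{k}\rfloor+\lfloor -b_i p^{k}\rfloor-\lfloor(-b_i+j/(q-1))p^{k}\rfloor\bigr)$, which, via $\lfloor\langle a_i p^{k}\rangle-jp^{k}/(q-1)\rfloor=\lfloor(a_i-j/(q-1))p^{k}\rfloor-\lfloor a_i p^{k}\rfloor$ and its analogue for $b_i$, equals $p-1$ times the exponent of $-p$ in Definition \ref{def_Gq}. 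Replacing $\pi^{p-1}$ by $-p$ then produces exactly ${}_{m}G_{m}[\,\{a_i\};\{b_i\}\mid\lambda^{-1}\,]_q$.

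The step I expect to be the main obstacle is this last piece of $\pi$-power bookkeeping: every other step is a formal substitution, but reconciling the Gross--Koblitz exponents with the floor-function exponents of Definition \ref{def_Gq} uses both $\sum_{k}p^{k}=(q-1)/(p-1)$ (to kill the stray $\pm mj$ pooled from the $2m$ ratios) and the floor identity above, and one must also track the reductions modulo $q-1$ hidden in the Teichm\"{u}ller exponents. Given the close parallel with \cite[Lemma 3.3]{McC7}, an equally valid route is to cite that lemma directly, observing that the indexing $\chi=\omega^{j}$ used here (in place of $\bar\omega^{j}$) is precisely what turns $\lambda$ into $\lambda^{-1}$.
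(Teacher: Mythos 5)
Your proposal is correct and follows essentially the same route as the paper: reindex the character sum by $\chi=\omega^j$, apply the Gross--Koblitz formula to each ratio $g(A_i\chi)/g(A_i)$ and $g(\bar{B_i\chi})/g(\bar{B_i})$, and match the resulting $\Gamma_p$-products and $(-p)$-exponents (via $\pi_p^{p-1}=-p$, $\sum_k p^k=\tfrac{q-1}{p-1}$ and the floor identity) with Definition \ref{def_Gq} evaluated at $\lambda^{-1}$. The paper compresses exactly this bookkeeping into equation (\ref{proof_lem_G_to_F_for1}) and a citation of \cite[Lemma 3.3]{McC7} for the $q=p$ case, so your more detailed exponent computation is just the "straightforward" step made explicit.
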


Following \cite{BCM}, we say the parameters $(\{a_i\}, \{b_i\})$ are \emph{defined over $\mathbb{Q}$} if there exist positive integers $p_1, p_2, \dots, p_t$ and $q_1, q_2, \dots, q_s$, with $t$ and $s$ minimal, such that
$$\prod_{i=1}^{m} \frac{x- e^{2 \pi \I a_i}}{x- e^{2 \pi \I b_i}} = \frac{\prod_{i=1}^{t} x^{p_i} - 1}{\prod_{i=1}^{s} x^{q_i} - 1}.$$
Let $D(x) := \gcd({\prod_{i=1}^{t} x^{p_i} - 1},{\prod_{i=1}^{s} x^{q_i} - 1})$, of degree $\delta$, have zeros $\exp \big(\frac{2 \pi \I \, c_h}{q-1}\bigr)$ for $1 \leq h \leq \delta$. 
Let $\mathfrak{s}(c)$ denote the multiplicity of the zero $\exp \bigl(\frac{2 \pi \I \, c}{q-1}\bigr)$.
We note that $m+\delta = \sum_{i=1}^{t} p_i = \sum_{i=1}^{s}q_i$ and we define 
$$M:=\frac{\prod_{i=1}^{t}{p_i}^{p_i}}{\prod_{i=1}^{s}{q_i}^{q_i}}.$$

\begin{theorem}\label{thm_GQ}
Let $q=p^r$ for $p$ an odd prime. If $(\{a_i\}, \{b_i\})$ are defined over $\mathbb{Q}$ with corresponding exponents $(\{p_i : 1 \leq i \leq t \}, \{q_i  : 1 \leq i \leq s \})$, then
\begin{multline*}
{_{m}G_{m}}
\biggl[ \begin{array}{cccc} a_1, & a_2, & \dotsc, & a_m \\[2pt]
 b_1, & b_2, & \dotsc, & b_m \end{array}
\Big| \; \lambda \; \biggr]_q
= 
\frac{-1}{q-1}  \sum_{j=0}^{q-2} 
(-1)^{j(m+\delta)}\;
q^{-\mathfrak{s}(0)+\mathfrak{s}(j)}\;
\bar{\omega}^j(M \cdot \lambda)\\
\times
\prod_{k=0}^{r-1} 
\prod_{i=1}^{t}  
\biggfp{\langle \tfrac{-j p_i}{q-1} p^k \rangle}
(-p)^{-\lfloor{ \tfrac{-j p_i}{q-1} p^k} \rfloor}
\prod_{i=1}^{s}  
\biggfp{\langle \tfrac{j q_i}{q-1} p^k \rangle}
(-p)^{-\lfloor{ \tfrac{j q_i}{q-1} p^k} \rfloor}.
\end{multline*}
\end{theorem}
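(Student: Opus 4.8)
The plan is to transform the defining sum in Definition~\ref{def_Gq} term by term in $j$. The hypothesis that $(\{a_i\},\{b_i\})$ is defined over $\mathbb{Q}$ amounts, once unwound, to a statement about how the parameter multisets complete---using the zeros of $D$---to full sets of roots of the polynomials $x^{p_i}-1$ and $x^{q_i}-1$; each such root set can then be collapsed by a Hasse--Davenport--type product relation, while the leftover $D$-zero factors collapse by the reflection formula for $\gfp{\cdot}$. Concretely, comparing zeros and poles in $\prod_{i=1}^{m}\frac{x-e^{2\pi\I a_i}}{x-e^{2\pi\I b_i}}=\frac{\prod_{i}(x^{p_i}-1)}{\prod_{i}(x^{q_i}-1)}$ and using the definitions of $D(x)$ and of the $c_h$ (listed with multiplicity, so that there are $\delta=\deg D$ of them), one obtains, modulo $1$, the multiset identity
\[
\{a_1,\dots,a_m\}\ \uplus\ \bigl\{\tfrac{c_h}{q-1}:1\le h\le\delta\bigr\}\;=\;\biguplus_{i=1}^{t}\bigl\{\tfrac{l}{p_i}:0\le l\le p_i-1\bigr\},
\]
together with its analogue involving the $b_i$, the $q_i$ and the same $c_h$. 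Introduce now the ``$a$-half'' and ``$b$-half'' of a summand,
\[
\Pi(a):=\prod_{k=0}^{r-1}\frac{\gfp{\langle(a-\tfrac{j}{q-1})p^k\rangle}}{\gfp{\langle ap^k\rangle}}\,(-p)^{-\lfloor\langle ap^k\rangle-\frac{jp^k}{q-1}\rfloor},\qquad
\Pi'(b):=\prod_{k=0}^{r-1}\frac{\gfp{\langle(-b+\tfrac{j}{q-1})p^k\rangle}}{\gfp{\langle -bp^k\rangle}}\,(-p)^{-\lfloor\langle -bp^k\rangle+\frac{jp^k}{q-1}\rfloor},
\]
each of which depends only on the residue of its argument modulo $1$. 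Then Definition~\ref{def_Gq} reads ${_{m}G_{m}}[\cdots]_q=\frac{-1}{q-1}\sum_{j=0}^{q-2}(-1)^{jm}\,\bar{\omega}^{j}(\lambda)\,\prod_{i=1}^{m}\Pi(a_i)\,\prod_{i=1}^{m}\Pi'(b_i)$, and the multiset identity lets us replace $\prod_{i=1}^{m}\Pi(a_i)$ by $\bigl(\prod_{i=1}^{t}\prod_{l=0}^{p_i-1}\Pi(l/p_i)\bigr)\big/\bigl(\prod_{h=1}^{\delta}\Pi(c_h/(q-1))\bigr)$ and, likewise, $\prod_{i=1}^{m}\Pi'(b_i)$ in terms of the $l/q_i$ and the same $c_h$.

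Next, for each $i$, evaluate $\prod_{l=0}^{p_i-1}\Pi(l/p_i)$ and $\prod_{l=0}^{q_i-1}\Pi'(l/q_i)$ using the $p$-adic form of the Hasse--Davenport product relation (obtained from the classical Gauss-sum relation via the Gross--Koblitz formula; it applies for every $q=p^r$ with $p\nmid p_i$, since $\gcd(p,p_i)=1$ makes $l\mapsto lp^k$ permute $\mathbb{Z}/p_i\mathbb{Z}$). Each such product collapses to the single factor $\prod_{k=0}^{r-1}\gfp{\langle\tfrac{-jp_i}{q-1}p^k\rangle}(-p)^{-\lfloor\frac{-jp_i}{q-1}p^k\rfloor}$---respectively its $q_i$-analogue $\prod_{k=0}^{r-1}\gfp{\langle\tfrac{jq_i}{q-1}p^k\rangle}(-p)^{-\lfloor\frac{jq_i}{q-1}p^k\rfloor}$---times an explicit monomial in $p_i$ (resp.\ $q_i$) and a power of $-1$; here one must be careful in the degenerate slots, where a participating character is trivial, but these contribute only signs and units (the constant Gauss-sum products that would otherwise be left over cancel). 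Over all $i$, the $p_i$- and $q_i$-monomials assemble into $\bar{\omega}^{j}(M)$, which together with the surviving $\bar{\omega}^{j}(\lambda)$ produces $\bar{\omega}^{j}(M\cdot\lambda)$.

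It remains to treat the leftover factors $\Pi(c_h/(q-1))\,\Pi'(c_h/(q-1))$. By the reflection formula $\gfp{x}\gfp{1-x}=\pm1$, together with $\langle y\rangle+\langle-y\rangle\in\{0,1\}$ and $\gfp{0}=1$---equivalently, by the $p$-adic analogue of the Gauss-sum identity $g(\chi)g(\bar{\chi})=\chi(-1)q$---each of these products is a sign times a power of $q$, the exponent being nonzero exactly when $c_h\equiv j$ or $c_h\equiv0\pmod{q-1}$; summing over $h$ and accounting for the division yields precisely the factor $q^{-\mathfrak{s}(0)+\mathfrak{s}(j)}$. Finally, collecting all the residual signs produced above together with the original $(-1)^{jm}$, and using $m+\delta=\sum_{i}p_i=\sum_{i}q_i$, they combine to $(-1)^{j(m+\delta)}$. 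Substituting everything back into the $j$-sum yields the asserted formula.

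The analytic inputs---the Gross--Koblitz formula, the Hasse--Davenport product relation, and the reflection formula for $\gfp{\cdot}$---are all standard, so the genuine work, and the place where slips are easiest, is the exponent and sign bookkeeping: pinning down, for each $i$ and each $0\le k\le r-1$, exactly when the characters entering the product relation become trivial and how this affects the monomial and the sign; checking that the many $(-p)^{-\lfloor\cdots\rfloor}$ factors produced recombine into precisely the floor-exponents appearing on the right-hand side; and verifying that the $c_h$-contributions add up to $q^{-\mathfrak{s}(0)+\mathfrak{s}(j)}$ while all the residual signs telescope to $(-1)^{j(m+\delta)}$. The subtlest point is the case $\gcd(p_i,q-1)>1$, in which $\bar{\omega}^{l(q-1)/p_i}$ no longer has exact order $p_i$, so that one is genuinely forced to work with the $p$-adic, rather than the naive finite-field, form of the Hasse--Davenport relation throughout.
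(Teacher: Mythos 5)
Your outline follows the paper's own proof almost step for step: the same multiset decomposition (parameters together with the zeros of $D$ fill out the full root sets of the $x^{p_i}-1$ and $x^{q_i}-1$), the same collapse of each block $\prod_{h=0}^{p_i-1}\Pi(h/p_i)$ via the $p$-adic product formula for $\gfp{\cdot}$ (the paper invokes Gross--Koblitz's multiplication theorem directly, which is exactly the ``$p$-adic Hasse--Davenport'' you describe, and you are right that the finite-field form is unavailable when $p_i \nmid q-1$ --- note, though, that the problematic case is $p_i \nmid q-1$, not $\gcd(p_i,q-1)>1$), Hermite's identity for the floor exponents, and $g(\chi)g(\bar{\chi})=\chi(-1)q$ for the integral $c_h$, which is where $q^{-\mathfrak{s}(0)+\mathfrak{s}(j)}$ and $(-1)^{j\delta}$ actually come from.

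The genuine gap is in your treatment of the leftover $D$-zero factors with $c_h\notin\mathbb{Z}$, i.e.\ the cyclotomic factors $\Phi_l$ of $D$ with $l\geq 3$ and $q\not\equiv 1\pmod{l}$. For such $c_h$ there is no character $\bar{\omega}^{c_h}$, so the ``$p$-adic analogue of $g(\chi)g(\bar{\chi})=\chi(-1)q$'' you appeal to does not exist; one can only use the reflection formula, which yields, for each factor, a sign $(-1)^{x_0}$ with $x_0\equiv \langle(\tfrac{t}{l}-\tfrac{j}{q-1})p^k\rangle \pmod p$. Your proposal asserts that these residual signs ``telescope to $(-1)^{j(m+\delta)}$,'' but showing that their product over each Galois orbit $\{t: \gcd(t,l)=1\}$ and over $0\leq k\leq r-1$ is exactly $+1$ is the real technical content of the theorem, not bookkeeping: since $q\not\equiv1\pmod l$ the exponents are not periodic in $k$ with period $r$, and the paper has to pass to an auxiliary extension ($f=rf'$ with $q^{f'}\equiv1\pmod l$), compute $\sum_k\langle a p^k\rangle_0$ modulo $p-1$ via the digit expansion of $(p^f-1)a$ (Lemma \ref{lem_GK0}, Corollary \ref{cor_GK0}), and then prove a dedicated parity lemma for sums of floors over the orbit (Lemma \ref{lem_tl}) to conclude the sign is trivial. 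Without an argument of this kind your proof establishes the formula only up to an undetermined $j$-dependent sign whenever $D$ has a cyclotomic factor $\Phi_l$ with $q\not\equiv1\pmod l$; everything else in your sketch is sound and matches the paper.
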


We now extend the results of Section \ref{subsec_Res_FF}. 
\begin{theorem}\label{thm_G_Main}
Let $n$ be a positive integer and let $q \equiv 1 \imod n$ be an odd prime power. Let $\zeta_n$ be a primitive $n$-th root of unity in $\mathbb{F}_q^{*}$ .
If $(\{na_i\}, \{nb_i\})$ are defined over $\mathbb{Q}$, then
\begin{multline*}
\sum_{l=0}^{n-1}
{_{m}G_{m}}
\biggl[ \begin{array}{cccc} 
n a_1, & n a_2, & \dotsc, & n a_m \\[2pt]
n b_1, & n b_2, & \dotsc, & n b_m \end{array}
\Big| \; \zeta_n^l \cdot \lambda \; \biggr]_q\\
=
{_{nm}G_{nm}}
\biggl[ \begin{array}{c} 
a_i+\frac{l}{n} : 1 \leq i \leq m, 0 \leq l \leq n-1 \\[2pt]
b_i +\frac{l}{n} : 1 \leq i \leq m, 0 \leq l \leq n-1 \end{array}
\Big| \; \lambda^n \; \biggr]_q.
\end{multline*}
Furthermore, if $(\{na_i\}, \{nb_i\})$ are defined over $\mathbb{Q}$ with corresponding exponents $(\{p_i : 1 \leq i \leq t \}, \{q_i  : 1 \leq i \leq s \})$ then 
$(\{a_i+\frac{l}{n}\}, \{b_i+\frac{l}{n}\})$ are defined over $\mathbb{Q}$ with exponents $(\{n p_i  : 1 \leq i \leq t \}, \{n q_i : 1 \leq i \leq s\})$.
\end{theorem}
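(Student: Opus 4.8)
The plan is to derive the identity by applying the explicit evaluation of Theorem~\ref{thm_GQ} to both sides, so the first order of business is to prove the ``furthermore'' clause (since that clause is exactly what makes Theorem~\ref{thm_GQ} applicable to the right-hand side). The key is the elementary factorization $\prod_{l=0}^{n-1}\bigl(x-e^{2\pi\I (a+l/n)}\bigr)=x^{n}-e^{2\pi\I na}$, whose roots are precisely the $n$-th roots of $e^{2\pi\I na}$. Substituting $x\mapsto x^{n}$ in the defining relation $\prod_{i=1}^{m}\tfrac{x-e^{2\pi\I na_i}}{x-e^{2\pi\I nb_i}}=\tfrac{\prod_{i=1}^{t}(x^{p_i}-1)}{\prod_{i=1}^{s}(x^{q_i}-1)}$ and applying the factorization to each factor on the left yields $\prod_{i,l}\tfrac{x-e^{2\pi\I(a_i+l/n)}}{x-e^{2\pi\I(b_i+l/n)}}=\tfrac{\prod_{i}(x^{np_i}-1)}{\prod_{i}(x^{nq_i}-1)}$, so $(\{a_i+\tfrac ln\},\{b_i+\tfrac ln\})$ is represented by $(\{np_i\},\{nq_i\})$. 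Preservation of minimality of $t$ and $s$, the passages $\delta\mapsto n\delta$ and $M\mapsto M^{n}$ (the latter using the balancedness $\sum p_i=\sum q_i$), and --- since $\gcd$ commutes with $y\mapsto y^{n}$ --- the identification of the new denominator polynomial as $D(x^{n})$, with multiplicities $\mathfrak s'(c)=\mathfrak s\bigl(nc\bmod(q-1)\bigr)$ and in particular $\mathfrak s'(0)=\mathfrak s(0)$, all follow from routine cyclotomic bookkeeping (pulling divisors back along $x\mapsto x^{n}$).

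For the identity itself, expand each ${_{m}G_{m}}[\,na_i;nb_i\mid\zeta_n^{l}\lambda\,]_q$ on the left using Theorem~\ref{thm_GQ}. The only $l$-dependence is in the factor $\bar\omega^{j}(\zeta_n^{l})=\bar\omega^{j}(\zeta_n)^{l}$, and since $\omega(\zeta_n)$ is a primitive $n$-th root of unity, $\sum_{l=0}^{n-1}\bar\omega^{j}(\zeta_n)^{l}$ equals $n$ when $n\mid j$ and $0$ otherwise. Hence the left side collapses to $\tfrac{-n}{q-1}\sum_{b=0}^{(q-1)/n-1}(\cdots)$, where the surviving terms are those with $j=nb$. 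Next expand the right side via Theorem~\ref{thm_GQ} using the exponents $(\{np_i\},\{nq_i\})$, $\delta'=n\delta$, $M'=M^{n}$, and $D(x^{n})$ from the first paragraph. Writing the summation index as $j''=\tfrac{q-1}{n}\,a+b$ with $0\le a\le n-1$ and $0\le b\le\tfrac{q-1}{n}-1$, I would verify that the $j''$-th summand is independent of $a$: the character factor since $\bar\omega^{j''}\!\bigl((M\lambda)^{n}\bigr)=\bar\omega^{(q-1)a+bn}(M\lambda)=\bar\omega^{bn}(M\lambda)$; the sign since $q-1$ is even, whence $(-1)^{(q-1)a(m+\delta)}=1$; the power of $q$ since $\mathfrak s'\bigl(\tfrac{q-1}{n}a+b\bigr)=\mathfrak s\bigl((q-1)a+bn\bmod(q-1)\bigr)=\mathfrak s(bn)$; and --- the crux --- the product of $p$-adic Gamma values, because replacing $j''$ by $j''+\tfrac{q-1}{n}$ shifts the arguments $\tfrac{-j''\,np_i}{q-1}p^{k}$ and $\tfrac{j''\,nq_i}{q-1}p^{k}$ by the integers $-p_ip^{k}$ and $q_ip^{k}$, leaving all fractional parts fixed and changing the accumulated power of $-p$ by $\prod_{k}(-p)^{p^{k}(\sum p_i-\sum q_i)}=1$. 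Summing the constant summand over the $n$ values of $a$ supplies a factor $n$, and the right side then matches the collapsed left side term by term. The degenerate case $\lambda=0$ is trivial, as both sides vanish.

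The main obstacle is this last verification: that the $p$-adic Gamma product occurring in Theorem~\ref{thm_GQ} is periodic in the summation index with period $(q-1)/n$. This is precisely where the balancedness $\sum_{i=1}^{t}p_i=\sum_{i=1}^{s}q_i$ (equivalently $m+\delta=\sum p_i$) is used in an essential way, and it demands careful handling of the fractional parts and floor functions. A secondary point needing care is the ``furthermore'' clause --- in particular preservation of minimality and the identification of the new denominator polynomial (hence of the multiplicities $\mathfrak s$) as $D(x^{n})$ --- since it is that clause which licenses the use of Theorem~\ref{thm_GQ} on the right-hand side in the first place.
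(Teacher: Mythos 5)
Your proposal is correct and follows essentially the same route as the paper: establish the ``furthermore'' clause via $\prod_{l=0}^{n-1}(x-e^{2\pi\I(a+l/n)})=x^n-e^{2\pi\I na}$ (with $M\mapsto M^n$, $\delta\mapsto n\delta$, $\mathfrak{s}(nc)=\mathfrak{s}_n(c)$), expand via Theorem \ref{thm_GQ}, collapse one side by orthogonality, and match the two sides using the period-$\frac{q-1}{n}$ invariance of the summand, which rests on $\sum p_i=\sum q_i$ exactly as you indicate. The only cosmetic difference is directional: the paper extends the collapsed left-hand sum from $\frac{q-1}{n}$ terms to the full range $0\leq j\leq q-2$ using this periodicity, whereas you collapse the right-hand sum by writing $j''=\frac{q-1}{n}a+b$; these are the same verification.
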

\noindent
We also have the following converse result, regardless of whether the parameters are defined over $\mathbb{Q}$.
\begin{theorem}\label{thm_G_Main_Converse}
Let $n$ be a positive integer and let $q \equiv 1 \imod n$ be a prime power. 
If $\lambda \in \mathbb{F}_q$ is not an $n$-th power, then
\begin{equation*}
{_{nm}G_{nm}}
\biggl[ \begin{array}{c} 
a_i+\frac{l}{n} : 1 \leq i \leq m, 0 \leq l \leq n-1 \\[2pt]
b_i+\frac{l}{n} : 1 \leq i \leq m, 0 \leq l \leq n-1 \end{array}
\Big| \; \lambda \; \biggr]_q
=0.
\end{equation*}
\end{theorem}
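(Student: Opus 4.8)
The plan is to imitate the proof of the finite-field converse, Theorem~\ref{thm_F_Main_Converse}, working directly from Definition~\ref{def_Gq}. Write $S$ for the sum over $j\in\{0,1,\dots,q-2\}$ defining the function ${}_{nm}G_{nm}[\cdots]_q$ in the statement, with the accompanying product running over $1\le i\le m$, $0\le l\le n-1$ and $0\le k\le r-1$. Since $q\equiv1\imod n$, the integer $N:=\tfrac{q-1}{n}$ is positive, and $j\mapsto j+N$ is a bijection of $\mathbb{Z}/(q-1)\mathbb{Z}$, so this substitution reindexes $S$. Its effect on the summand is to replace $\tfrac{jp^k}{q-1}$ by $\tfrac{jp^k}{q-1}+\tfrac{p^k}{n}$ throughout; equivalently, in each argument of the form $(a_i+\tfrac{l}{n}-\tfrac{j}{q-1})p^k$ or $(-b_i-\tfrac{l}{n}+\tfrac{j}{q-1})p^k$ it performs the index shift $l\mapsto l-1$.

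The heart of the argument is to verify that every factor of the $j$-summand other than $\bar\omega^j(\lambda)$ is unchanged by this substitution. For the product of $p$-adic Gamma functions this is transparent: since $n\cdot\tfrac{p^k}{n}=p^k\in\mathbb{Z}$, each argument $\langle(a_i+\tfrac{l}{n}-\tfrac{j}{q-1})p^k\rangle$ and $\langle(-b_i-\tfrac{l}{n}+\tfrac{j}{q-1})p^k\rangle$ depends only on $l\bmod n$, so $l\mapsto l-1$ merely permutes the $n$ factors of $\prod_{l=0}^{n-1}$ cyclically. The sign $(-1)^{jnm}$ is likewise fixed, picking up $(-1)^{Nnm}=(-1)^{(q-1)m}=1$ because $q$ is odd. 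The delicate factor is the power of $(-p)$, whose exponents involve $\lfloor\cdot\rfloor$. Here I would use that $\langle x-\tfrac{p^k}{n}\rangle$ and $\langle x\rangle-\tfrac{p^k}{n}$ differ by an integer to rewrite each floor appearing after the shift as the corresponding pre-shift floor at index $l-1$ plus an integer correction; summing these corrections over $0\le l\le n-1$ telescopes, yielding $+p^k$ from the $a_i$-type terms and $-p^k$ from the $b_i$-type terms for each fixed $i,k$. These cancel in pairs, so the power of $(-p)$ is invariant.

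Consequently the substitution multiplies the $j$-summand by exactly $\bar\omega^N(\lambda)=\bar\omega^{(q-1)/n}(\lambda)$ while leaving $S$ unchanged, whence $S=\bar\omega^{(q-1)/n}(\lambda)\,S$. Now $\bar\omega^{(q-1)/n}$ is a character of $\mathbb{F}_q^{*}$ of exact order $n$, so its kernel is precisely the subgroup of $n$-th powers; since $\lambda$ is not an $n$-th power it is in particular nonzero, hence $\bar\omega^{(q-1)/n}(\lambda)\ne1$ and therefore $S=0$, as claimed.

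I expect the only genuine obstacle to be the bookkeeping for the $(-p)$-exponents in the second step. In the finite-field proof of Theorem~\ref{thm_F_Main_Converse} the analogous invariance under $\chi\mapsto\chi\chi_n$ is immediate, since Gauss sums are multiplicative and $\chi_n(-1)^{nm}=1$; in the $p$-adic setting one must instead track how the floor functions move and check that the contributions attached to the numerator and denominator parameters cancel. (As a consistency check, when $q$ is moreover $\equiv1$ modulo the common denominator of all the $a_i+\tfrac{l}{n}$ and $b_i+\tfrac{l}{n}$, the identity reduces to Theorem~\ref{thm_F_Main_Converse} through Lemma~\ref{lem_G_to_F}, using that inversion preserves the group of $n$-th powers; the direct argument above is what covers every admissible $q$.)
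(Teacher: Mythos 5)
Your proposal is correct and follows essentially the same route as the paper: expand via Definition \ref{def_Gq}, shift the summation index $j$ by $\frac{q-1}{n}$, observe that the summand is unchanged except for the factor $\bar{\omega}^{j}(\lambda)$, and conclude from $S=\bar{\omega}^{(q-1)/n}(\lambda)\,S$ with $\bar{\omega}^{(q-1)/n}(\lambda)\neq 1$. The paper's proof states the change of variable without detail, whereas you additionally verify the invariance of the $p$-adic gamma factors and the telescoping cancellation of the floor exponents, which is a correct elaboration of the same argument.
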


\noindent
When $n=2$, the following corollaries are immediate.
\begin{cor}\label{cor_G_n=2}
For $q$ an odd prime power and $(\{2a_i\}, \{2b_i\})$ defined over $\mathbb{Q}$,
\begin{multline*}
{_{m}G_{m}}
\biggl[ \begin{array}{cccc} 2 a_1, & 2 a_2, & \dotsc, & 2 a_m \\[2pt]
 2 b_1, & 2 b_2, & \dotsc, & 2 b_m \end{array}
\Big| \; \lambda \; \biggr]_q
+
{_{m}G_{m}}
\biggl[ \begin{array}{cccc} 2 a_1, & 2 a_2, & \dotsc, & 2 a_m \\[2pt]
 2 b_1, & 2 b_2, & \dotsc, & 2 b_m \end{array}
\Big| \; -\lambda \; \biggr]_q\\
=
{_{2m}G_{2m}}
\biggl[ \begin{array}{ccccccc} 
a_1, & a_1 +\frac{1}{2}, & a_2, & a_2 +\frac{1}{2}, & \dotsm, & a_m, & a_m +\frac{1}{2} \\[2pt]
b_1, & b_1 +\frac{1}{2}, & b_2, & b_2 +\frac{1}{2}, & \dotsm, & b_m, & b_m +\frac{1}{2}
 \end{array}
\Big| \; \lambda^2 \; \biggr]_q.
\end{multline*}
\end{cor}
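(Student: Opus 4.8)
The plan is to derive Corollary \ref{cor_G_n=2} directly from Theorem \ref{thm_G_Main} by specializing to $n = 2$; no new computation is needed beyond unpacking notation. First I would note that the hypothesis of the corollary, that $(\{2a_i\},\{2b_i\})$ is defined over $\mathbb{Q}$, is precisely the hypothesis ``$(\{na_i\},\{nb_i\})$ defined over $\mathbb{Q}$'' of Theorem \ref{thm_G_Main} with $n=2$, and that the congruence condition $q \equiv 1 \pmod{n}$ there becomes simply the requirement that $q$ be odd. Hence Theorem \ref{thm_G_Main} applies.

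Next I would identify the relevant root of unity. Since $q$ is odd, $\mathbb{F}_q^{*}$ is cyclic of even order and so contains a unique element of order $2$, namely $-1$; thus any primitive second root of unity $\zeta_2 \in \mathbb{F}_q^{*}$ equals $-1$. Therefore the sum $\sum_{l=0}^{1} {}_{m}G_{m}[\,\cdots \mid \zeta_2^{\,l}\cdot\lambda\,]_q$ on the left-hand side of Theorem \ref{thm_G_Main} is exactly the two-term expression ${}_{m}G_{m}[\,\cdots \mid \lambda\,]_q + {}_{m}G_{m}[\,\cdots \mid -\lambda\,]_q$ appearing in the corollary.

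Then I would flatten the doubly-indexed parameter lists on the right-hand side: as $l$ ranges over $\{0,1\}$, the top parameters $a_i + \tfrac{l}{n} = a_i + \tfrac{l}{2}$ run through $a_i$ and $a_i + \tfrac12$ for each $1 \leq i \leq m$, and similarly for the bottom parameters with $b_i$. By the remark following Definition \ref{def_Gq}, ${}_{2m}G_{2m}[\,\cdots\,]$ is invariant under permutation of its parameters, so this collection may be rewritten in the interleaved order $a_1, a_1+\tfrac12, \dots, a_m, a_m+\tfrac12$ over $b_1, b_1+\tfrac12, \dots, b_m, b_m+\tfrac12$ displayed in the statement, and $\lambda^{n} = \lambda^{2}$ matches the stated argument.

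Since the mathematical substance is carried entirely by Theorem \ref{thm_G_Main}, I do not expect any genuine obstacle; the only things to verify are the two bookkeeping points above --- the identification $\zeta_2 = -1$ in odd characteristic and the reindexing of the parameter list --- together with the permutation-invariance of ${}_{m}G_{m}[\,\cdots\,]$ noted after Definition \ref{def_Gq}. If one preferred an argument not invoking the general theorem, the same identity could instead be obtained by running the $p$-adic analogue of the finite-field manipulation used for Theorem \ref{thm_F_Main}, combined with Lemma \ref{lem_G_to_F} and Theorem \ref{thm_GQ}, but specializing Theorem \ref{thm_G_Main} is by far the most economical route.
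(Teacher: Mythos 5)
Your proposal is correct and matches the paper exactly: the paper states that Corollary \ref{cor_G_n=2} is immediate from Theorem \ref{thm_G_Main} with $n=2$, which is precisely your specialization, including the identifications $\zeta_2=-1$, $q\equiv 1 \pmod 2$ meaning $q$ odd, and the reordering of parameters permitted by the invariance noted after Definition \ref{def_Gq}.
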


\begin{cor}\label{cor_G_n=2_Converse}
Let $q$ be an odd prime power. If $\lambda \in \mathbb{F}_q$ is not a square, then
\begin{equation*}
{_{2m}G_{2m}}
\biggl[ \begin{array}{ccccccc} 
a_1, & a_1 +\frac{1}{2}, & a_2, & a_2 +\frac{1}{2}, & \dotsm, & a_m, & a_m +\frac{1}{2} \\[2pt]
b_1, & b_1 +\frac{1}{2}, & b_2, & b_2 +\frac{1}{2}, & \dotsm, & b_m, & b_m +\frac{1}{2}
 \end{array}
\Big| \; \lambda \; \biggr]_q
=0.
\end{equation*}
\end{cor}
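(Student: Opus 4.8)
The plan is to read off Corollary~\ref{cor_G_n=2_Converse} as the case $n = 2$ of Theorem~\ref{thm_G_Main_Converse}. First, since $q$ is an odd prime power the congruence $q \equiv 1 \imod 2$ is automatic, so the hypothesis on $q$ in Theorem~\ref{thm_G_Main_Converse} is met with $n = 2$. Second, the two parameter displays agree: for $n = 2$ the doubly-indexed family $\{\, a_i + \tfrac{l}{2} : 1 \le i \le m,\ 0 \le l \le 1 \,\}$ is exactly $\{\, a_1,\ a_1 + \tfrac12,\ a_2,\ a_2 + \tfrac12,\ \dots,\ a_m,\ a_m + \tfrac12 \,\}$, and likewise for the lower parameters; since, as recorded just after Definition~\ref{def_Gq}, the value of ${_{m}G_{m}}[\cdots]$ is invariant under permuting its upper parameters among themselves and its lower parameters among themselves, the difference in ordering between the two displays is immaterial. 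Third, ``$\lambda$ is not a square'' is literally ``$\lambda$ is not a $2$nd power,'' the hypothesis of Theorem~\ref{thm_G_Main_Converse} at $n = 2$. Feeding these three observations into that theorem gives that the ${_{2m}G_{2m}}[\cdots]$ appearing in the corollary equals $0$, which is the assertion.

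So there is nothing subtle in the corollary itself; all the content sits in Theorem~\ref{thm_G_Main_Converse}, and I note that that theorem carries no ``defined over $\mathbb{Q}$'' hypothesis, so no side conditions need to be checked. Where I expect the real work to lie is in proving the parent theorem, for which the natural route is to substitute $j \mapsto j + \tfrac{q-1}{n}$ in the defining sum of Definition~\ref{def_Gq} for the $nm$-fold function. This shift sends $\tfrac{j}{q-1} \mapsto \tfrac{j}{q-1} + \tfrac1n$, which cyclically permutes the parameter set $\{\, a_i + \tfrac{l}{n} \,\}$ (and likewise $\{\, b_i + \tfrac{l}{n} \,\}$) modulo $1$; one then checks that the product of $p$-adic Gamma factors together with the attendant $(-p)$-power exponents returns to itself (the latter via a telescoping/carry computation), while the sign $(-1)^{j\cdot nm}$ is unchanged because $q - 1$ is even, so that only the character factor $\bar\omega^{\,j}(\lambda)$ is affected, becoming $\bar\omega^{\,j}(\lambda)\,\bar\omega^{\,(q-1)/n}(\lambda)$. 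Since $\omega^{(q-1)/n}$ has exact order $n$ and $\lambda$ is not an $n$-th power, $\bar\omega^{\,(q-1)/n}(\lambda)$ is a nontrivial root of unity, and the relation $S = \bar\omega^{\,(q-1)/n}(\lambda)\,S$ forces $S = 0$. The one spot demanding genuine care is the invariance of the $(-p)$-power exponents under a shift that is not by an integer; alternatively, when the parameters are additionally \emph{defined over $\mathbb{Q}$}, one could instead deduce the vanishing from the finite-field statement (here Corollary~\ref{cor_F_n=2_Converse}) via Lemma~\ref{lem_G_to_F}.
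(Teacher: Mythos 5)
Your proposal is correct and matches the paper exactly: the paper obtains Corollary~\ref{cor_G_n=2_Converse} precisely as the immediate $n=2$ specialization of Theorem~\ref{thm_G_Main_Converse}, with the same observations that $q$ odd gives $q \equiv 1 \imod 2$, that the parameter lists coincide up to reordering, and that ``not a square'' is ``not a $2$nd power.'' Your sketch of the parent theorem's proof also mirrors the paper's argument, which shifts $j$ by $\frac{q-1}{n}$ in Definition~\ref{def_Gq} to extract the factor $\omega^{\frac{q-1}{n}}(\lambda) \neq 1$ and conclude vanishing.
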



\subsection{Classical Setting}\label{subsec_Res_Classical}
Recall the classical generalized hypergeometric series $_mF_{m}$ defined by
\begin{equation*}
{_mF_{m}} 
\biggl[ \begin{array}{cccc} 
a_1, & a_2, & \dotsc, & a_m \vspace{.05in}\\
b_1, & b_2, & \dotsc, & b_m \end{array}
\Big| \; z \biggr]
:=\sum^{\infty}_{k=0}
\frac{\ph{a_1}{k} \ph{a_2}{k} \dotsm \ph{a_m}{k}}
{\ph{b_1}{k} \ph{b_2}{k} \dotsm \ph{b_m}{k}}
\; {z^k},
\end{equation*}
where $a_i$, $b_i$ and $z$ are complex numbers, with none of the $b_i$ being negative integers or zero, $m$ a positive integer, $\ph{a}{0}:=1$ and $\ph{a}{k} := a(a+1)(a+2)\dotsm(a+k-1)$ for a positive integer $k$. Setting $b_1=1$ recovers the more usual ${_mF_{m-1}}$ definition/notation. 

We have the following classical series analogue of Theorems \ref{thm_F_Main} and \ref{thm_G_Main}.
\begin{theorem}\label{thm_C_Main}
Let $n$ be a positive integer and let $\xi_n \in \mathbb{C}$ be a primitive $n$-th root of unity. 
Then,
\begin{multline*}
\sum_{l=0}^{n-1}
{_mF_{m}} 
\biggl[ \begin{array}{cccc} 
n a_1, & n a_2, & \dotsc, & n a_m \vspace{.05in}\\
n b_1 & n b_2, & \dotsc, & n b_m \end{array}
\Big| \; \xi_n^l \cdot z \biggr]\\
=
n \cdot
{_{nm}F_{nm}} 
{\biggl[ \begin{array}{c} 
a_i+\frac{l}{n} : 1 \leq i \leq m, 0 \leq l \leq n-1 \\[2pt]
b_i +\frac{l}{n} : 1 \leq i \leq m, 0 \leq l \leq n-1 \end{array}
\Big| \; z^n \biggr]}.
\end{multline*}
\end{theorem}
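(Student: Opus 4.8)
The plan is to prove the identity by manipulating the series expansions directly, the key observation being that a root-of-unity average on the left-hand side extracts exactly those terms whose index is divisible by $n$. Concretely, I would first expand the left-hand side termwise:
\[
\sum_{l=0}^{n-1}\sum_{k=0}^{\infty}\frac{\prod_{i=1}^{m}\ph{na_i}{k}}{\prod_{i=1}^{m}\ph{nb_i}{k}}\,\xi_n^{lk}z^k
=\sum_{k=0}^{\infty}\frac{\prod_{i=1}^{m}\ph{na_i}{k}}{\prod_{i=1}^{m}\ph{nb_i}{k}}z^k\sum_{l=0}^{n-1}\xi_n^{lk}.
\]
Since $\sum_{l=0}^{n-1}\xi_n^{lk}=n$ when $n\mid k$ and $0$ otherwise, only the terms with $k=nj$ survive, giving $n\sum_{j=0}^{\infty}\frac{\prod_{i}\ph{na_i}{nj}}{\prod_{i}\ph{nb_i}{nj}}z^{nj}$. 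So the whole theorem reduces to the Pochhammer multiplication identity
\[
\ph{na}{nj}=n^{nj}\prod_{l=0}^{n-1}\ph{a+\tfrac{l}{n}}{j},
\]
which I would apply to each numerator factor $\ph{na_i}{nj}$ and each denominator factor $\ph{nb_i}{nj}$; the powers $n^{nj}$ in numerator and denominator (there are $m$ of each) cancel, and what remains is precisely the $j$-th term of the right-hand $_{nm}F_{nm}$ series with the claimed parameters. Matching the summand then finishes the proof.

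The Gauss/Legendre-type multiplication formula for the Pochhammer symbol is standard, but since it is the engine of the argument I would either cite it or give the one-line verification: writing $\ph{na}{nj}=\frac{\Gamma(na+nj)}{\Gamma(na)}$ and applying the Gauss multiplication theorem $\Gamma(nz)=(2\pi)^{(1-n)/2}n^{nz-1/2}\prod_{l=0}^{n-1}\Gamma(z+\tfrac{l}{n})$ at $z=a+j$ and $z=a$, the $(2\pi)$ and $n^{-1/2}$ factors cancel in the ratio, leaving $n^{nj}\prod_{l=0}^{n-1}\frac{\Gamma(a+\frac{l}{n}+j)}{\Gamma(a+\frac{l}{n})}$ as required. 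One should note that this identity is an identity of rational functions in $a$, so it holds without restriction on the $a_i$, and the $b_i$ hypothesis (no $b_i+\frac{l}{n}$ a non-positive integer, equivalently no $nb_i$ a non-positive integer) is exactly what is needed for both sides to be well-defined.

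The only genuine subtlety — the step I expect to need the most care — is the interchange of the finite $l$-sum with the infinite $k$-sum and, more importantly, the convergence/formal-power-series framing: the rearrangement above is valid as an identity of formal power series in $z$ without any analytic hypothesis, and for $|z|$ small enough (inside the common radius of convergence of all $n+1$ series involved) it is also an identity of analytic functions by absolute convergence, with the general case following by analytic continuation. I would state the result at the level of formal power series (or for $z$ in a suitable neighbourhood of $0$) to avoid any delicate discussion of convergence on the boundary, remarking that since all series here are terminating or convergent in a common disc this costs nothing. No step requires more than bookkeeping once the Pochhammer multiplication identity is in hand.
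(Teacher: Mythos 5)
Your proposal is correct and follows essentially the same route as the paper: expand termwise, use the root-of-unity orthogonality $\sum_{l=0}^{n-1}\xi_n^{lk}$ to extract the terms with $n\mid k$, and then apply the Pochhammer multiplication identity $\ph{na}{nk}=n^{nk}\prod_{l=0}^{n-1}\ph{a+\frac{l}{n}}{k}$ to match the summand of the $_{nm}F_{nm}$ series. The only difference is that you spell out the Gauss-multiplication verification of that identity and the formal-power-series/convergence framing, which the paper leaves implicit.
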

Theorem \ref{thm_C_Main} can be derived from \cite[eqn.~24, p.~440]{PBM}, which is stated without proof.
For completeness, we provide a proof in Section \ref{sec_Proofs}.


\section{Preliminaries}\label{sec_Prelim}

We start by recalling some properties of Gauss and Jacobi sums. See \cite{BEW, IR} for further details, noting that we have adjusted results to take into account $\varepsilon(0)=0$, where $\varepsilon$ is the trivial character. We first note that $g(\varepsilon)=-1$. For $\chi \in \widehat{\mathbb{F}^{*}_{q}}$,
\begin{equation}\label{for_GaussConj}
g(\chi)g(\bar{\chi})=
\begin{cases}
\chi(-1) q & \textup{if } \chi \neq \varepsilon,\\
1 & \textup{if } \chi= \varepsilon.
\end{cases}
\end{equation}

We now state the Hasse-Davenport product formula for Gauss sums.
\begin{theorem}[\cite{BEW} Thm.~11.3.5]\label{thm_HD}
Let $n$ be a positive integer and let $\chi_n \in \widehat{\mathbb{F}_q^{*}}$ be a character of order $n$. For $\psi \in \widehat{\mathbb{F}_q^{*}}$, we have
\begin{equation*}
\prod_{l=0}^{n-1} g(\chi_n^l \psi) = g(\psi^n) \, \psi^{-n}(n)\prod_{l=1}^{n-1} g(\chi_n^l).
\end{equation*}
\end{theorem}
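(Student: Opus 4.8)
The plan is to prove the product formula directly from the definition of the Gauss sum, reducing the $n$-fold product on the left to a single Gauss sum of $\psi^n$ times a $\psi$-independent Jacobi-sum constant. Writing each factor as $g(\chi_n^l\psi)=\sum_{x_l\in\mathbb{F}_q^*}\chi_n^l(x_l)\,\psi(x_l)\,\theta(x_l)$ and multiplying, one gets
\[
\prod_{l=0}^{n-1} g(\chi_n^l\psi)
=\sum_{x_0,\dots,x_{n-1}\in\mathbb{F}_q^*}
\chi_n\!\Bigl(\textstyle\prod_{l} x_l^{\,l}\Bigr)\,
\psi\!\Bigl(\textstyle\prod_l x_l\Bigr)\,
\theta\!\Bigl(\textstyle\sum_l x_l\Bigr).
\]
First I would reorganise this sum according to the $n$-th power map $x\mapsto x^n$, separating the product $u=\prod_l x_l$ from the $n-1$ ``transverse'' ratios of the $x_l$. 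The point of this change of variables is that along the fibres of the power map the additive argument $\sum_l x_l$ is evaluated at $n$ times a single variable; since $\sum_t\rho(t)\theta(ct)=\bar\rho(c)g(\rho)$, summing $\theta(n\,t)$ against a character $\rho$ returns $\bar\rho(n)\,g(\rho)$, and this is precisely the mechanism that manufactures the normalising factor $\psi^{-n}(n)$ while producing the single Gauss sum $g(\psi^n)$.

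The transverse part of the sum is a multivariable Jacobi sum, which I would evaluate using the standard identity $\sum_{t_1+\dots+t_k=1}\alpha_1(t_1)\dotsm\alpha_k(t_k)=g(\alpha_1)\dotsm g(\alpha_k)/g(\alpha_1\dotsm\alpha_k)$ (valid when the $\alpha_i$ and their product are nontrivial), together with the relation $g(\chi)g(\bar\chi)=\chi(-1)q$ from \eqref{for_GaussConj}. After the collapse produces $g(\psi^n)$ and the factor $\psi^{-n}(n)$, the transverse Jacobi sum should telescope to the constant $\prod_{l=1}^{n-1}g(\chi_n^l)$, which is exactly the $\psi$-independent product appearing on the right. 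Here one must note that $\prod_{l=1}^{n-1}\chi_n^l=\chi_n^{n(n-1)/2}=:\eta$, where $\eta=\varepsilon$ for $n$ odd and $\eta=\varphi$ for $n$ even, so the quadratic character has to be tracked carefully when $n$ is even.

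The main obstacle is isolating the normalising factor $\psi^{-n}(n)$ cleanly: it is the finite-field shadow of the $n^{1/2-nz}$ term in Gauss's classical multiplication formula for $\Gamma$, and it emerges only once the additive character is forced onto an argument scaled by $n$. A robust way to pin it down is to first establish the identity up to an unknown multiplicative character $c(\psi)$ in $\psi$ — using that both sides transform identically under the twist $\psi\mapsto\chi_n\psi$ (the left product is visibly invariant since $\chi_n^n=\varepsilon$) and under rescaling the additive character — and then to determine $c(\psi)=\psi^{-n}(n)$ by the $n$-scaling computation above, fixing any residual constant by specialising at $\psi=\varepsilon$, where the formula collapses to the tautology $g(\varepsilon)\prod_{l=1}^{n-1}g(\chi_n^l)=g(\varepsilon)\prod_{l=1}^{n-1}g(\chi_n^l)$ after using $g(\varepsilon)=-1$ and $\varepsilon(n)=1$. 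The remaining care is bookkeeping the degenerate cases in which some $\chi_n^l\psi$ or $\psi^n$ is trivial (so the relevant Gauss sum equals $-1$ and the Jacobi-sum evaluation must be applied in degenerate form); a direct check along the lines of the $\psi=\varepsilon$ case confirms the stated identity persists in each such case.
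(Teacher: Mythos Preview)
The paper does not prove this statement; it is the Hasse--Davenport product formula, quoted from \cite{BEW} Theorem~11.3.5 as a preliminary tool, so there is no proof in the paper to compare against.

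Regarding your sketch on its own merits: the overall architecture (expand the product, change variables to peel off a $g(\psi^n)$ factor, identify the residue as a $\psi$-independent Jacobi-type constant, and pin down the normaliser by specialisation) is the right shape, but the central step does not work as you describe it. You claim that ``along the fibres of the power map the additive argument $\sum_l x_l$ is evaluated at $n$ times a single variable,'' and that summing $\theta(nt)$ against $\psi^n$ then produces $\psi^{-n}(n)\,g(\psi^n)$. But under any natural change of variables --- say $x_l = t\,y_l$ with $\prod_l y_l = 1$, or with $\sum_l y_l = 1$ --- the additive argument becomes $t\sum_l y_l$, not $nt$; the scalar $n$ simply does not appear. (The only locus where $\sum_l x_l = nt$ is the diagonal $x_0=\dots=x_{n-1}=t$, which is a single fibre, not the whole sum.) So the mechanism you describe for manufacturing $\psi^{-n}(n)$ is not there.

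Your fallback strategy also falls short. The invariance under $\psi\mapsto\chi_n\psi$ does show that the ratio of the two sides depends only on $\psi^n$, but $\psi^n$ still ranges over a subgroup of index $n$ in $\widehat{\mathbb{F}_q^*}$, so a single specialisation at $\psi=\varepsilon$ cannot determine a function on that subgroup --- it fixes one value, not the character $\psi\mapsto\psi^{-n}(n)$. The genuine content of the Hasse--Davenport product formula lies exactly in identifying this $\psi$-dependence, and the standard proofs (an inductive Jacobi-sum argument as in \cite{BEW}, or the $p$-adic route via Gross--Koblitz and the $\Gamma_p$ multiplication formula, which is essentially Theorem~\ref{thm_GrossKoblitzMult} in this paper) do real work at this point that your sketch does not supply.
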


The following is a variant of the standard orthogonal relation, but we prove it here for completeness.
\begin{prop}\label{prop_Orth}
Let $n$ be a positive integer and let $q \equiv 1 \imod n$ be a prime power. Let $\zeta_n$ be a primitive $n$-th root of unity in $\mathbb{F}_q^{*}$ and let $\chi \in \widehat{\mathbb{F}_q^{*}}$. Then
\begin{equation*}
\sum_{l=0}^{n-1} \chi(\zeta_n^l)
=
\begin{cases}
n & \textup{if } \chi = \psi^n \textup{ for some } \psi \in \widehat{\mathbb{F}_q^{*}},\\
0 & \textup{otherwise.}
\end{cases}
\end{equation*}
\end{prop}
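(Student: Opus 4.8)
The plan is to evaluate the left-hand side as a finite geometric series whose common ratio is $\chi(\zeta_n)$, and then to reinterpret the condition $\chi(\zeta_n)=1$ as the statement that $\chi$ lies in the subgroup of $n$-th powers of $\widehat{\mathbb{F}_q^{*}}$.

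First I would observe that, since $\zeta_n^{\,n}=1$, we have $\chi(\zeta_n)^{n}=\chi(\zeta_n^{\,n})=\chi(1)=1$, so $\chi(\zeta_n)$ is an $n$-th root of unity in $\mathbb{C}$. Consequently
$$\sum_{l=0}^{n-1}\chi(\zeta_n^l)=\sum_{l=0}^{n-1}\chi(\zeta_n)^{l}=\begin{cases} n & \text{if } \chi(\zeta_n)=1,\\[2pt] \dfrac{\chi(\zeta_n)^{n}-1}{\chi(\zeta_n)-1}=0 & \text{if } \chi(\zeta_n)\neq 1.\end{cases}$$
Thus the proposition reduces to the equivalence: $\chi(\zeta_n)=1$ if and only if $\chi=\psi^n$ for some $\psi\in\widehat{\mathbb{F}_q^{*}}$.

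The implication ``$\Leftarrow$'' is immediate, since $\psi^n(\zeta_n)=\psi(\zeta_n^{\,n})=\psi(1)=1$. For ``$\Rightarrow$'' I would argue at the level of the cyclic group $\mathbb{F}_q^{*}$. Fix a generator $T$ of $\mathbb{F}_q^{*}$, which has order $q-1$ with $n\mid q-1$. Because $\zeta_n$ has multiplicative order exactly $n$, it equals $T^{\,k(q-1)/n}$ for some integer $k$ with $\gcd(k,n)=1$. Writing $\chi(T)=e^{2\pi\I a/(q-1)}$ for an integer $a$, the condition $\chi(\zeta_n)=1$ reads $e^{2\pi\I ak/n}=1$, i.e.\ $n\mid ak$, whence $n\mid a$ because $\gcd(k,n)=1$. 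Setting $a=nc$ and defining $\psi\in\widehat{\mathbb{F}_q^{*}}$ by $\psi(T)=e^{2\pi\I c/(q-1)}$ yields $\psi^n=\chi$. Alternatively, and without choosing $k$: the characters trivial on the unique order-$n$ subgroup $\langle\zeta_n\rangle$ form a subgroup of $\widehat{\mathbb{F}_q^{*}}$ of order $(q-1)/n$, while the $n$-th powers form a subgroup of the same order $(q-1)/n$ which, by ``$\Leftarrow$'', is contained in it, so the two subgroups coincide.

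I do not expect a genuine obstacle here. The only point requiring a little care is the normalization $\zeta_n=T^{\,k(q-1)/n}$ with $\gcd(k,n)=1$, which is just the standard description of the elements of order $n$ in a cyclic group; everything else is routine bookkeeping with geometric series and cyclic-group arithmetic. (Note that Theorem~\ref{thm_HD} is not needed for this proposition.)
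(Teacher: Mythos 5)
Your proof is correct and takes essentially the same route as the paper: both reduce the sum to the dichotomy $\chi(\zeta_n)=1$ versus $\chi(\zeta_n)\neq 1$ (your geometric series is equivalent to the paper's shifting trick) and rest on the equivalence between $\chi(\zeta_n)=1$ and $\chi$ being an $n$-th power in $\widehat{\mathbb{F}_q^{*}}$. The only difference is that the paper asserts the nontrivial direction (if $\chi$ is not an $n$-th power then $\chi(\zeta_n)\neq 1$) without justification, while you supply the short cyclic-group argument for it, which is a harmless and correct addition.
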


\begin{proof}
If $\chi = \psi^n$ then $ \chi(\zeta_n^l) = \psi(\zeta_n^{nl}) = \psi(1) = 1$, and the result follows.
Now assume $\chi$ is not an $n$-th power. Then $\chi(\zeta_n) \neq 1$ and so
\begin{align*}
\chi(\zeta_n) \sum_{l=0}^{n-1} \chi(\zeta_n^l)
=
\sum_{l=0}^{n-1} \chi(\zeta_n^{l+1})
=
\sum_{l=0}^{n-1} \chi(\zeta_n^l)
\end{align*}
implies $\sum_{l=0}^{n-1} \chi(\zeta_n^l)$ must equal zero.
\end{proof}
 
\begin{cor}\label{cor_Orth}
Let $n$ be a positive integer and let $\xi_n \in \mathbb{C}$ be a primitive $n$-th root of unity. 
Then, for a non-negative integer $k$,
\begin{equation*}
\sum_{l=0}^{n-1}
\xi_n^{lk}
=
\begin{cases}
n & \textup{if } k \equiv 0 \imod n,\\
0 & \textup{otherwise.}
\end{cases}
\end{equation*}
\end{cor}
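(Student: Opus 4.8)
The plan is to prove Corollary~\ref{cor_Orth} directly from the definition of a primitive $n$-th root of unity in $\mathbb{C}$, mirroring the argument used for Proposition~\ref{prop_Orth} but in the simpler classical setting where no character arithmetic is needed. First I would fix a non-negative integer $k$ and set $S := \sum_{l=0}^{n-1} \xi_n^{lk} = \sum_{l=0}^{n-1} (\xi_n^k)^l$, recognizing this as a finite geometric sum with ratio $\xi_n^k$.

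The case split is on whether $n \mid k$. If $k \equiv 0 \imod n$, then $\xi_n^k = (\xi_n^n)^{k/n} = 1$, so each summand is $1$ and $S = n$. If $k \not\equiv 0 \imod n$, then since $\xi_n$ is a \emph{primitive} $n$-th root of unity we have $\xi_n^k \neq 1$; hence the geometric-series formula gives $S = \dfrac{\xi_n^{nk} - 1}{\xi_n^k - 1} = \dfrac{(\xi_n^n)^k - 1}{\xi_n^k - 1} = \dfrac{1^k - 1}{\xi_n^k - 1} = 0$. Alternatively, to keep the argument parallel to Proposition~\ref{prop_Orth}, one multiplies $S$ by $\xi_n^k$, reindexes the sum (using $\xi_n^{nk} = 1$ to wrap around), obtains $\xi_n^k S = S$, and concludes $S = 0$ since $\xi_n^k - 1 \neq 0$.

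There is essentially no obstacle here; the only point requiring a moment's care is the justification that $\xi_n^k \neq 1$ when $n \nmid k$, which is precisely the definition of primitivity (the multiplicative order of $\xi_n$ is exactly $n$). I would present the reindexing version for stylistic consistency with the preceding proposition, so the corollary reads as the evident analogue over $\mathbb{C}$ of the finite-field orthogonality relation.
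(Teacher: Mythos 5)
Your proof is correct, but it takes a different route from the paper. You argue directly in $\mathbb{C}$: the sum is a finite geometric series with ratio $\xi_n^k$, which equals $n$ when $n \mid k$ and vanishes otherwise because primitivity forces $\xi_n^k \neq 1$ (or, equivalently, via the multiply-by-$\xi_n^k$-and-reindex trick). The paper instead \emph{deduces} the corollary from Proposition~\ref{prop_Orth}: it picks a prime power $q \equiv 1 \imod n$ with $\mathbb{F}_q^{*} = \langle \alpha \rangle$, chooses a primitive $(q-1)$-st root of unity $\xi_{q-1} \in \mathbb{C}$ with $\xi_{q-1}^{(q-1)/n} = \xi_n$, defines the primitive character $T(\alpha^t) = \xi_{q-1}^t$, and applies the proposition with $\chi = T^k$ and $\zeta_n = \alpha^{(q-1)/n}$; the condition ``$\chi$ is an $n$-th power in $\widehat{\mathbb{F}_q^{*}}$'' then translates exactly to $k \equiv 0 \imod n$ since $T$ has order $q-1$ and $n \mid q-1$. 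The paper's derivation buys uniformity --- the classical orthogonality is exhibited as a specialization of the finite-field relation already proved, so no new argument is needed --- while yours is more elementary and self-contained, avoiding any reference to finite fields for what is after all a purely complex-analytic identity. Either proof is perfectly acceptable; the only point to be careful about in yours, which you do address, is that $\xi_n^k \neq 1$ for $n \nmid k$ is exactly where primitivity is used.
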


\begin{proof}
Let $q\equiv 1 \imod n$, $\mathbb{F}_q^{*} = \langle \alpha \rangle$ and $\xi_{q-1} \in \mathbb{C}$ be a primitive $(q-1)$-st root of unity such that $\xi_{q-1}^{\frac{q-1}{n}} = \xi_n$. Consider the primitive character $T \in \widehat{\mathbb{F}_q^*}$ defined by $T(\alpha^t) = \xi_{q-1}^t$. Then $\alpha^{\frac{q-1}{n}}$ is a primitive $n$-th root of unity in $\mathbb{F}_q^{*}$ and $T(\alpha^{\frac{q-1}{n}})=\xi_n$. Now let $\chi= T^k$ and $\zeta_n=\alpha^{\frac{q-1}{n}}$ in Proposition \ref{prop_Orth} and the result follows.
\end{proof}

We now recall some details about the Teichm\"{u}ller character and the $p$-adic gamma function. For further details, see \cite{Ko, Ko2}.
Let $\mathbb{Z}_p$ denote the ring of $p$-adic integers, $\mathbb{Q}_p$ the field of $p$-adic numbers, $\bar{\mathbb{Q}_p}$ the algebraic closure of $\mathbb{Q}_p$, and $\mathbb{C}_p$ the completion of $\bar{\mathbb{Q}_p}$. 
Let $\mathbb{Z}_q$ be the ring of integers in the unique unramified extension of $\mathbb{Q}_p$ with residue field $\mathbb{F}_q$.
Recall that for each $x \in \mathbb{F}_q^{*}$, there is a unique Teichm\"{u}ller representative ${\omega(x) \in \mathbb{Z}^{\times}_q}$ such that $\omega(x)$ is a $({q-1})$-st root of unity and $\omega(x) \equiv x \imod p$. Therefore, we define the Teichm\"{u}ller character to be the primitive character $\omega:  \mathbb{F}_q^{*}  \rightarrow\mathbb{Z}^{\times}_q$ given by $x \mapsto \omega(x)$, which we extend with $\omega(0):=0$.

Let $p$ be an odd prime.  For $n \in \mathbb{Z}^{+}$ we define the $p$-adic gamma function as
\begin{align*}
\gfp{n} &:= {(-1)}^n \prod_{\substack{0<j<n\\p \nmid j}} j 
\end{align*}
and extend it to all $x \in\mathbb{Z}_p$ by setting $\gfp{0}:=1$ and
$\gfp{x} := \lim_{n \rightarrow x} \gfp{n}$
for $x\neq 0$, where $n$ runs through any sequence of positive integers $p$-adically approaching $x$. 
This limit exists, is independent of how $n$ approaches $x$, and determines a continuous function
on $\mathbb{Z}_p$ with values in $\mathbb{Z}^{*}_p$.
The function satisfies the following product formula.
\begin{theorem}[\cite{GK} Thm.~3.1]\label{thm_GrossKoblitzMult}
If $n\in\mathbb{Z}^{+}$, $p \nmid n$ and $0\leq x <1$ with $(q-1)x \in \mathbb{Z}$, then
\begin{equation}\label{for_pGammaMult}
\prod_{k=0}^{r-1} \prod_{h=0}^{n-1} \gfp{\langle \tfrac{x+h}{n} p^k \rangle} 
= \omega \big(n^{(q-1)x}\bigr)
\prod_{k=0}^{r-1} \gfp{\langle x p^k \rangle} \prod_{h=1}^{n-1} \gfp{\langle \tfrac{h}{n} p^k \rangle}.
\end{equation}
\end{theorem}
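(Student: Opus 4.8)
\medskip
\noindent\emph{Proof sketch.} The plan is to deduce \eqref{for_pGammaMult} from the Hasse--Davenport product relation (Theorem~\ref{thm_HD}) by way of the Gross--Koblitz formula, which expresses the Gauss sum attached to $\bar\omega^{\,a}$ over $\mathbb{F}_q$ as $-\pi^{\,s(a)}\prod_{k=0}^{r-1}\Gamma_p\!\bigl(\langle\tfrac{a p^{k}}{q-1}\rangle\bigr)$, with $\pi^{p-1}=-p$ fixed and $s(a)$ the sum of the base-$p$ digits of $a$. Since the statement only assumes $p\nmid n$, I would first pass to a power $Q=p^{R}$ with $r\mid R$ and $n(q-1)\mid Q-1$; then $\mathbb{F}_{Q}^{*}$ has a character $\chi_n$ of order $n$, and every argument in \eqref{for_pGammaMult} has the form (integer)$/(Q-1)$. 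Using that $x q\equiv x\pmod 1$ (as $(q-1)x\in\mathbb{Z}$), that $\prod_{h=1}^{n-1}\Gamma_p(\langle\tfrac{h}{n}p^{k}\rangle)=\prod_{h=1}^{n-1}\Gamma_p(\tfrac{h}{n})$ for every $k$ (because $p\nmid n$), and that the multiset $\{\langle\tfrac{x+h}{n}p^{k}\rangle:0\le h<n\}$ is invariant under $k\mapsto k+r$, one checks that each of the three products in \eqref{for_pGammaMult} formed over $\mathbb{F}_{Q}$ equals the corresponding product over $\mathbb{F}_{q}$ raised to the power $R/r$. It therefore suffices to establish \eqref{for_pGammaMult} over $\mathbb{F}_{Q}$.

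Over $\mathbb{F}_{Q}$ I would apply Theorem~\ref{thm_HD} with $\psi$ a suitable power of the inverse Teichm\"{u}ller character, chosen so that $\psi^{n}$ corresponds to $x$ and $\chi_n^{l}\psi$ to $\tfrac{x+l}{n}$, and then replace every Gauss sum by its Gross--Koblitz expression. The $\Gamma_p$-factors recombine into precisely the two sides of \eqref{for_pGammaMult} (to the power $R/r$); the factor $\psi^{-n}(n)$ in Theorem~\ref{thm_HD} yields the Teichm\"{u}ller term, which one identifies with $\omega(n^{(q-1)x})$; the overall sign is $(-1)^{n}$ on each side; and what is left is an identity among the exponents of $\pi$ --- that the digit sum of the index of $\psi^{n}$ plus those of the indices of $\chi_n^{1},\dots,\chi_n^{n-1}$ equals the sum of the digit sums of the indices of $\chi_n^{0}\psi,\dots,\chi_n^{n-1}\psi$ --- which follows from how carries propagate in base $p$ under these substitutions.

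The main obstacle is the last step: the above delivers $(\mathrm{LHS})^{R/r}=(\mathrm{RHS})^{R/r}$ for the two unit-valued sides of \eqref{for_pGammaMult}, and one must upgrade this to $\mathrm{LHS}=\mathrm{RHS}$. If $R$ can be taken with $R/r$ prime to $p-1$ this is immediate, as the $(R/r)$-th power map is then injective on $\mathbb{Z}_p^{\times}$; otherwise one must show separately that the root of unity $\mathrm{LHS}/\mathrm{RHS}$ is trivial, e.g.\ by comparing reductions modulo $p$. This awkwardness is an artifact of passing through Gauss sums: the identity also admits a direct proof from the characterizing functional equation of $\Gamma_p$ (continuity, $\Gamma_p(0)=1$, and the two-case recursion for $\Gamma_p(x+1)/\Gamma_p(x)$), and in either approach the substantive content is the same piece of base-$p$ combinatorics --- the behavior of the digit expansion of $(q-1)x$ and its cyclic shifts, together with $(q-1)x\equiv s\bigl((q-1)x\bigr)\pmod{p-1}$ and the fact that $\omega(n)$ has order dividing $p-1$ --- so that is the step I would expect to demand the most care.
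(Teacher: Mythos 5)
The paper itself offers no proof of this statement: it is quoted directly from Gross--Koblitz (\cite{GK}, Thm.~3.1), the only added content being the remark that the result persists when $\omega$ is the Teichm\"{u}ller character of $\mathbb{F}_q$ rather than of $\mathbb{F}_p$. So there is no internal argument to compare yours against, and I assess your sketch on its own merits. The route you choose (Hasse--Davenport plus the Gross--Koblitz formula over an extension $\mathbb{F}_Q$, $Q=p^R$, $r\mid R$, $n(q-1)\mid Q-1$) is viable, and the bookkeeping you describe does check out: the multiset $\{\langle\tfrac{x+h}{n}p^{k}\rangle : 0\le h<n\}$ is invariant under $k\mapsto k+r$ because $xp^{r}=x+(q-1)x$ and $h\mapsto (q-1)x+hq$ permutes $\mathbb{Z}/n\mathbb{Z}$ (using $\gcd(p,n)=1$); the $\pi$-exponents match exactly by Hermite's identity; the signs $(-1)^{n}$ cancel; and the Teichm\"{u}ller factors over $\mathbb{F}_Q$ and $\mathbb{F}_q$ agree to the power $R/r$ since $q\equiv 1 \pmod{p-1}$ and $\omega(n)^{p-1}=1$.

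The genuine gap is the descent step you yourself flag, and neither of your proposed resolutions closes it. First, the favorable case need not exist: $R$ must be a common multiple of $r$ and of $d:=\mathrm{ord}_{n(q-1)}(p)$, so $R/r$ is always a multiple of the fixed integer $\mathrm{lcm}(r,d)/r$, which may share a factor with $p-1$ no matter how $R$ is chosen. For instance $q=p=7$, $n=4$: $d=\mathrm{ord}_{24}(7)=2$ forces $R/r$ even while $p-1=6$ is even, so the injectivity-of-the-power-map argument only gives $(\mathrm{LHS}/\mathrm{RHS})^{2}=1$. Second, the fallback ``compare reductions modulo $p$'' is not a finishing touch but the substantive content: since both sides reduce to products of factorial-type values mod $p$, the needed congruence $\mathrm{LHS}\equiv\mathrm{RHS}\pmod p$ is a Gauss--Wilson-type multiplication congruence carrying essentially the depth of the theorem itself (in particular of its $r=1$ case), and you do not prove it. Note that when $n\mid q-1$ no field extension is needed and your argument is complete; what remains open is exactly the extra generality $p\nmid n$ that the statement asserts. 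A complete proof along your lines must either establish that mod-$p$ congruence independently, or abandon the Gauss-sum detour for the direct argument from the functional equation and continuity of $\Gamma_p$ that you mention only in passing.
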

\noindent We note that in the original statement of Theorem \ref{thm_GrossKoblitzMult} in \cite{GK}, $\omega$ is the Teichm\"{u}ller character of $\mathbb{F}_p^{*}$. However, the result above still holds as $\omega|_{\mathbb{F}_p^{*}}$ is the Teichm\"{u}ller character of $\mathbb{F}_p^{*}$.

The Gross-Koblitz formula, Theorem \ref{thm_GrossKoblitz} below, allows us to relate Gauss sums and the $p$-adic gamma function. Let $\pi_p \in \mathbb{C}_p$ be the fixed root of $x^{p-1}+p=0$ that satisfies ${\pi_p \equiv \zeta_p-1 \imod{{(\zeta_p-1)}^2}}$.
\begin{theorem}[\cite{GK} Thm.~1.7]\label{thm_GrossKoblitz}
For $ j \in \mathbb{Z}$,
\begin{equation*} 
g(\bar{\omega}^j)=-\pi_p^{(p-1) \sum_{k=0}^{r-1} \langle{\frac{jp^k}{q-1}}\rangle} \: \prod_{k=0}^{r-1} \gfp{\langle{\tfrac{jp^k}{q-1}}\rangle}.
\end{equation*}
\end{theorem}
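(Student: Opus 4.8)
The plan is to prove this by Dwork's $p$-adic analytic method: rewrite the character sum defining $g(\bar\omega^j)$ in terms of Teichm\"uller representatives, convert it into the extraction of certain coefficients of a convergent $p$-adic power series, and match the outcome against Morita's $\gfp{\cdot}$. Since $\bar\omega^j$ and each $\langle jp^k/(q-1)\rangle$ depend only on $j \bmod (q-1)$, I may assume $0 \le j \le q-2$. First I would fix the canonical additive character $\theta(x) = \zeta_p^{\operatorname{Tr}_{\mathbb{F}_q/\mathbb{F}_p}(x)}$, normalised so that $\zeta_p \equiv 1 + \pi_p \imod{\pi_p^2}$, and introduce Dwork's splitting function $E(X) = \exp\bigl(\pi_p(X - X^p)\bigr)$. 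The two properties I need are: (i) although $\operatorname{ord}_p \pi_p = 1/(p-1)$ places $\exp$ on the boundary of convergence, the coefficients of $E(X) = \sum_{m \ge 0} c_m X^m$ have $\operatorname{ord}_p c_m$ growing at least linearly in $m$ (indeed $\operatorname{ord}_p c_m = m/(p-1)$ for $m < p$), so $E$ converges on a disk of radius exceeding $1$; and (ii) the splitting identity $\theta(x) = \prod_{k=0}^{r-1} E\bigl(\hat x^{\,p^k}\bigr)$, where $\hat x = \omega(x)$. Property (ii) is exactly what will manufacture the product over $k$ appearing in the statement.

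Next I would substitute this into $g(\bar\omega^j) = \sum_{x \in \mathbb{F}_q^{*}} \hat x^{-j}\, \theta(x)$, expand each factor $E(\hat x^{p^k}) = \sum_{m_k} c_{m_k}\hat x^{m_k p^k}$, and interchange the resulting (absolutely convergent) multiple sum with the sum over $x$. Orthogonality of Teichm\"uller characters, namely $\sum_{x \in \mathbb{F}_q^{*}} \hat x^{\,N} = q-1$ if $(q-1) \mid N$ and $0$ otherwise, then selects precisely the tuples $(m_0,\dots,m_{r-1})$ with $\sum_k m_k p^k \equiv j \imod{q-1}$. Among these, the minimal total degree $\sum_k m_k$ equals the base-$p$ digit sum $s_p(j)$, attained by taking the $m_k$ to be the base-$p$ digits of $j$; combining this with the elementary identity $\sum_{k=0}^{r-1}\langle jp^k/(q-1)\rangle = s_p(j)/(p-1)$ shows that the leading $\pi_p$-power of the surviving sum is $\pi_p^{(p-1)\sum_k \langle jp^k/(q-1)\rangle}$, which already reproduces the Stickelberger valuation of $g(\bar\omega^j)$ and matches the exponent in the theorem.

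The crux is to show that summing over \emph{all} surviving tuples, not merely the leading one, assembles precisely into $-\prod_{k=0}^{r-1} \gfp{\langle jp^k/(q-1)\rangle}$. Since $c_m = \pi_p^m/m!$ for $0 \le m \le p-1$, after factoring out $\pi_p^{(p-1)\sum_k \langle jp^k/(q-1)\rangle}$ the leading contribution is a product of reciprocal factorials of the digits, and the remaining tuples—which mix in the $-\pi_p X^p$ part of $E$—are exactly the higher-order corrections that, via Morita's limit definition of $\gfp{\cdot}$ together with Wilson's theorem (which absorbs the overall factor $q-1$ into the sign $-1$), rebuild the $p$-adic Gamma values. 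Justifying this identification, together with the attendant convergence and interchange of summation, is the main obstacle; it amounts to establishing the Dwork congruences for the coefficients $c_m$ of the splitting function.

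Finally, an alternative I would keep in reserve is to characterise $g(\bar\omega^j)$, as a function of $j$, by the relations it satisfies together with one congruence: the Hasse--Davenport product relation (Theorem \ref{thm_HD}) on the Gauss-sum side mirrors the $\gfp{\cdot}$ multiplication formula (Theorem \ref{thm_GrossKoblitzMult}) on the right, the conjugate relation \eqref{for_GaussConj} mirrors the reflection formula for $\gfp{\cdot}$, and both sides agree at $j = 0$ since $g(\varepsilon) = -1$ while $\gfp{0} = 1$. The catch, and the reason this route still needs the Stickelberger-type congruence extracted above, is that these multiplicative relations alone do not determine $g(\bar\omega^j)$ uniquely; it is the leading $\pi_p$-term that removes the residual ambiguity.
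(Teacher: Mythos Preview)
The paper does not give a proof of this statement: Theorem~\ref{thm_GrossKoblitz} is quoted verbatim from \cite[Thm.~1.7]{GK} and used as a black box (for instance, in the proofs of Lemma~\ref{lem_G_to_F} and Theorem~\ref{thm_GQ}). There is therefore nothing in the paper to compare your proposal against.

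That said, your sketch follows the standard Dwork/Gross--Koblitz route and is broadly correct in outline: the splitting function $E(X)=\exp\bigl(\pi_p(X-X^p)\bigr)$, the identity $\theta(x)=\prod_{k=0}^{r-1}E(\hat x^{\,p^k})$, orthogonality over Teichm\"uller representatives, and the digit-sum identity $\sum_k\langle jp^k/(q-1)\rangle=s_p(j)/(p-1)$ together recover Stickelberger's valuation, and the remaining identification with $\prod_k\gfp{\langle jp^k/(q-1)\rangle}$ is indeed the substantive step. You correctly flag that this last step---showing the full sum over surviving tuples assembles into the $p$-adic Gamma product---is the main obstacle, and you do not actually carry it out; so what you have written is a plan rather than a proof. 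The ``alternative'' you mention (matching functional equations plus one congruence) is also a known strategy, but as you note it is not self-contained without the Stickelberger-type input. If you want to complete the argument, the cleanest route is the one in \cite{GK} itself, which uses the Dwork power series together with Morita's characterisation of $\gfp{\cdot}$; Koblitz's account in \cite{Ko} is also a good reference.
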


The $p$-adic gamma function also satisfies the reflection formula
\begin{equation}\label{for_pGammaOneMinus}
\gfp{x}\gfp{1-x} = {(-1)}^{x_0},
\end{equation}
where $x_0 \in \{1,2, \dotsc, {p}\}$ such that $x_0 \equiv x \pmod {p}$.
Techniques in \cite{GK} allow us to calculate $x_0$ when $x=\langle a p^j \rangle$.
\begin{lemma}[\cite{GK} Lemmas 2.4, 2.5 \& 2.11]\label{lem_GK0}
Let $a \in \mathbb{Q} \cap \mathbb{Z}_p$ with $0 <  a < 1$. 
Let $f \in \mathbb{Z}^{+}$ be such that 
$(p^f-1)a \in \mathbb{Z}^{+}$.
If we write
$$(p^f-1) a = z_f + z_1 \, p +z_2 \, p^2 + \cdots + z_{f-1} \, p^{f-1},$$
for integers $z_i$ with $0\leq z_i < p$,
then
$${\langle a p^j \rangle}_0 = p - z_{f-j},$$
for all $0 \leq j < f$,
and
$$\lfloor a p^j \rfloor = z_{f-j} + z_{f-j+1} \, p + \cdots + z_{f-1} \, p^{j-1},$$
for all $1 \leq j < f$.
Furthermore, 
$$\sum_{i=1}^{f} z_i \equiv (p^f-1) a \pmod{p-1}$$
and
$$\sum_{i=1}^{j} z_{f-i} \equiv \lfloor a p^j \rfloor \pmod{p-1}.$$
\end{lemma}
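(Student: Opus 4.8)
The plan is to reduce every assertion to the (purely periodic) base-$p$ expansion of the rational number $a$. Since $a \in \mathbb{Q}\cap\mathbb{Z}_p$ with $0<a<1$ and $N:=(p^f-1)a\in\mathbb{Z}^{+}$, we have $1\le N\le p^f-2$; write the base-$p$ expansion as $N=\sum_{i=0}^{f-1}d_i p^i$ with $0\le d_i<p$. Comparing with the hypothesis $N = z_f + z_1 p + \dotsb + z_{f-1}p^{f-1}$ identifies $d_i=z_i$ for $1\le i\le f-1$ and $d_0=z_f$, so all three conclusions are really statements about the digits $d_0,\dotsc,d_{f-1}$ read cyclically. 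The one device I would use repeatedly is $p^f\equiv1\imod{p^f-1}$: multiplying $N$ by $p^j$ and reducing modulo $p^f-1$ cyclically permutes the digits, giving
$$N p^j \equiv r_j := \sum_{i=0}^{f-1} d_i\, p^{(i+j)\bmod f}\imod{p^f-1},\qquad 0\le r_j< p^f-1,$$
where the right-hand sum genuinely is the $f$-digit base-$p$ representation of $r_j$ (distinct $i$ give distinct residues $(i+j)\bmod f$, and the value lies in $[0,p^f-2]$ since $N\neq p^f-1$). Consequently $\langle a p^j\rangle = r_j/(p^f-1)$ and $\lfloor a p^j\rfloor = (Np^j-r_j)/(p^f-1)$.

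For the floor formula I would isolate the ``carries.'' The terms of $Np^j=\sum_i d_i p^{i+j}$ with $i+j\ge f$ are exactly those folded back by the reduction mod $p^f-1$; applying $p^{i+j}=p^{i+j-f}(p^f-1)+p^{i+j-f}$ to each of them yields $\lfloor a p^j\rfloor=\sum_{i=f-j}^{f-1}d_i\,p^{i-f+j}=\sum_{\ell=0}^{j-1}d_{f-j+\ell}\,p^\ell$. For $1\le j<f$ the indices $f-j,\dotsc,f-1$ all lie in $\{1,\dotsc,f-1\}$, so $d_{f-j+\ell}=z_{f-j+\ell}$ and we obtain the claimed $z_{f-j}+z_{f-j+1}p+\dotsb+z_{f-1}p^{j-1}$. (The restriction $j<f$ is precisely what keeps the index $f$, i.e. the slot identified with $d_0$, out of this formula.)

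For $\langle a p^j\rangle_0$ the point is that $\langle a p^j\rangle=r_j/(p^f-1)$ is itself a $p$-adic integer (as $p\nmid p^f-1$) and must be reduced mod $p$ in $\mathbb{Z}_p$. Since $p^f-1\equiv-1\imod p$ and $r_j$ has units digit $d_{(f-j)\bmod f}$, we get $\langle a p^j\rangle\equiv -d_{(f-j)\bmod f}\imod p$. In every case $0\le j<f$ one has $d_{(f-j)\bmod f}=z_{f-j}$ (using $d_i=z_i$ when $1\le j<f$, and $d_0=z_f$ when $j=0$), and $p-z_{f-j}\in\{1,\dotsc,p\}$ because $0\le z_{f-j}\le p-1$; by the definition of $(\cdot)_0$ this forces $\langle a p^j\rangle_0=p-z_{f-j}$. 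Finally, the two congruences modulo $p-1$ follow at once from $p\equiv1\imod{p-1}$: $N=\sum_{i=0}^{f-1}d_i p^i\equiv\sum_{i=0}^{f-1}d_i=\sum_{i=1}^{f}z_i$, and $\lfloor a p^j\rfloor=\sum_{\ell=0}^{j-1}z_{f-j+\ell}p^\ell\equiv\sum_{\ell=0}^{j-1}z_{f-j+\ell}=\sum_{i=1}^{j}z_{f-i}\imod{p-1}$.

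The only real obstacle is bookkeeping: keeping the cyclic relabeling $z_f\leftrightarrow d_0$ consistent, correctly identifying which digit of $r_j$ occupies the units place, and respecting the two different index ranges ($0\le j<f$ for the $(\cdot)_0$ statement versus $1\le j<f$ for the floor, where one needs $f-j\ge1$). Since the result is classical and already available from \cite{GK}, I would present the above as a short self-contained verification rather than unwinding the proofs of Lemmas~2.4, 2.5 and 2.11 there.
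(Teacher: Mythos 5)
Your proposal is correct. Note that the paper itself gives no proof of this lemma; it is quoted verbatim from Gross--Koblitz (\cite{GK}, Lemmas 2.4, 2.5, 2.11), so there is nothing internal to compare against, and your digit-cycling argument is in the same spirit as the classical source. The verification is sound: you correctly use $0<a<1$ to get $N=(p^f-1)a\le p^f-2$, which is exactly what guarantees that the cyclically shifted digit sum $r_j$ satisfies $r_j<p^f-1$ and hence equals the reduction of $Np^j$ modulo $p^f-1$; the identification $d_0=z_f$, $d_i=z_i$ for $1\le i\le f-1$ is forced by uniqueness of base-$p$ representation; the floor formula, the computation $\langle ap^j\rangle\equiv -d_{(f-j)\bmod f}\pmod p$ (using $p^f-1\equiv-1$ and the fact that $\langle ap^j\rangle$ lies in $\mathbb{Z}_p$), and the two congruences via $p\equiv1\pmod{p-1}$ all check out, with the index ranges handled correctly.
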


\begin{cor}\label{cor_GK0}
With the notation of Lemma \ref{lem_GK0},
\begin{equation*}
\sum_{k=0}^{r-1} 
{\langle a p^k \rangle}_0
\equiv 
r -(p^f-1)a + \lfloor a p^{f-1} \rfloor - \lfloor a p^{r-1} \rfloor
\pmod{p-1}.
\end{equation*}
\end{cor}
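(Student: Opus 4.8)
The plan is to work modulo $p-1$, where $p\equiv 1$, and to set up a ``transfer'' identity expressing each $\langle a p^{k+1}\rangle_0$ in terms of the integers $\lfloor a p^{k}\rfloor$ and $\lfloor a p^{k+1}\rfloor$, so that the sum telescopes. Concretely, I would first prove that for every integer $k\ge 0$,
\[
\langle a p^{k+1}\rangle_0 \;=\; p\bigl(1+\lfloor a p^{k}\rfloor\bigr)-\lfloor a p^{k+1}\rfloor .
\]
To see this, write $a p^{k}=\lfloor a p^{k}\rfloor+\langle a p^{k}\rangle$, multiply by $p$, and put $d_k:=\lfloor p\,\langle a p^{k}\rangle\rfloor$; since $0\le \langle a p^{k}\rangle<1$ we have $d_k\in\{0,1,\dots,p-1\}$, and then $\lfloor a p^{k+1}\rfloor=p\lfloor a p^{k}\rfloor+d_k$ while $\langle a p^{k+1}\rangle=p\,\langle a p^{k}\rangle-d_k$. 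As $\langle a p^{k}\rangle\in\mathbb{Z}_p$, the term $p\,\langle a p^{k}\rangle$ vanishes modulo $p$, so $\langle a p^{k+1}\rangle\equiv -d_k\pmod p$; since $\langle a p^{k+1}\rangle_0$ is the representative of $\langle a p^{k+1}\rangle$ modulo $p$ lying in $\{1,\dots,p\}$, this forces $\langle a p^{k+1}\rangle_0=p-d_k$, and substituting $d_k=\lfloor a p^{k+1}\rfloor-p\lfloor a p^{k}\rfloor$ gives the displayed identity.

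Next I would sum this identity over $0\le k\le r-2$ and reduce modulo $p-1$, so that every factor of $p$ becomes $1$; the two floor sums telescope, leaving
\[
\sum_{k=1}^{r-1}\langle a p^{k}\rangle_0 \;\equiv\; (r-1)+\lfloor a\rfloor-\lfloor a p^{r-1}\rfloor \;=\;(r-1)-\lfloor a p^{r-1}\rfloor \pmod{p-1},
\]
where $\lfloor a\rfloor=0$ since $0<a<1$.

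It remains to evaluate the $k=0$ term. Here I would use that $(p^{f}-1)a\in\mathbb{Z}$: this gives $\langle a p^{f}\rangle=\langle a+(p^{f}-1)a\rangle=a$, hence $\langle a p^{f}\rangle_0=\langle a\rangle_0$ and $\lfloor a p^{f}\rfloor=(p^{f}-1)a$. Applying the transfer identity with $k=f-1$,
\[
\langle a\rangle_0=\langle a p^{f}\rangle_0=p\bigl(1+\lfloor a p^{f-1}\rfloor\bigr)-(p^{f}-1)a\;\equiv\;1+\lfloor a p^{f-1}\rfloor-(p^{f}-1)a \pmod{p-1};
\]
alternatively this is immediate from Lemma \ref{lem_GK0}, since there $\langle a p^{0}\rangle_0=p-z_f$ and $z_f\equiv(p^{f}-1)a-\sum_{i=1}^{f-1}z_i\equiv(p^{f}-1)a-\lfloor a p^{f-1}\rfloor\pmod{p-1}$. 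Adding the last two displays yields
\[
\sum_{k=0}^{r-1}\langle a p^{k}\rangle_0\;\equiv\; r-(p^{f}-1)a+\lfloor a p^{f-1}\rfloor-\lfloor a p^{r-1}\rfloor\pmod{p-1},
\]
as claimed.

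I expect the only subtle point to be the transfer identity, where one must pin down $\langle a p^{k+1}\rangle_0$ exactly rather than just modulo $p$; this is where the convention that $\langle\cdot\rangle_0\in\{1,\dots,p\}$ matters (a vanishing carry digit $d_k$ must produce the value $p$, not $0$), and one should check the identity holds for all $k\ge 0$, including $k=f-1$ when $f=1$. Everything after that is routine bookkeeping with $p\equiv 1\pmod{p-1}$.
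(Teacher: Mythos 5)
Your proof is correct, and it takes a somewhat different route from the paper's. The paper simply substitutes $\langle a p^k \rangle_0 = p - z_{f-k}$ from Lemma \ref{lem_GK0}, writes $z_f = \sum_{i=1}^{f} z_i - \sum_{i=1}^{f-1} z_{f-i}$, and invokes the two digit-sum congruences of that lemma to land on the right-hand side in three lines; it is a direct computation with the base-$p$ digits of $(p^f-1)a$. You instead prove a self-contained carry identity $\langle a p^{k+1}\rangle_0 = p\bigl(1+\lfloor a p^{k}\rfloor\bigr)-\lfloor a p^{k+1}\rfloor$ (your carry $d_k$ is exactly the digit $z_{f-k-1}$, so in effect you re-derive the relevant piece of Lemma \ref{lem_GK0} rather than quoting it), telescope it modulo $p-1$, and dispose of the $k=0$ term via the periodicity $\langle a p^{f}\rangle = a$, $\lfloor a p^{f}\rfloor = (p^f-1)a$. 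Your handling of the one delicate point is right: since $a \in \mathbb{Q}\cap\mathbb{Z}_p$, the fractional parts lie in $\mathbb{Z}_p$, the congruence defining $x_0$ is $p$-adic, and $p-d_k \in \{1,\dots,p\}$ pins down $\langle a p^{k+1}\rangle_0$ exactly. What your version buys is independence from the digit-sum congruences (only the definition of $x_0$ and $(p^f-1)a \in \mathbb{Z}$ are used), a uniform treatment of $r=1$, and validity for any $r$ without the implicit constraint $r \leq f$ that the paper's indexing $z_{f-k}$, $0 \leq k \leq r-1$, relies on; what the paper's version buys is brevity, since Lemma \ref{lem_GK0} is already on the table and does the bookkeeping for it.
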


\begin{proof}
If $r=1$ then both sides are congruent to $1-z_f$. Let $r>1$.
From Lemma \ref{lem_GK0} we see that
\begin{align*}
\sum_{k=0}^{r-1} 
{\langle a p^k \rangle}_0
&=
rp - \sum_{k=0}^{r-1} z_{f-k}\\
&=
rp - \left( \sum_{i=1}^{f} z_i - \sum_{i=1}^{f-1} z_{f-i} \right) - \sum_{k=1}^{r-1} z_{f-k} \\
&\equiv
r - ((p^f-1) a  - \lfloor a p^{f-1} \rfloor) - \lfloor a p^{r-1} \rfloor
\pmod{p-1}.
\end{align*}
\end{proof}

We will need the following result in the proof of Theorem \ref{thm_GQ}.

\begin{lemma}\label{lem_tl}
Let $l\geq 3$ be an integer. 
Let $p$ be an odd prime with $\gcd(p,l)=1$. 
Let $q=p^r$ such that $q \not\equiv 1 \imod l$.
Let $f^{\prime}$ be a positive integer such that $q^{f^{\prime}} \equiv 1 \imod l$.
Let $f= r f^{\prime}$.
Then for all integers $j$,
\begin{equation*}
\sum_{\substack{1 \leq t < l \\ \gcd(t,l)=1}}
\lfloor \langle \tfrac{t}{l} - \tfrac{j}{q-1} \rangle p^{f-1} \rfloor - \lfloor \langle \tfrac{t}{l} - \tfrac{j}{q-1} \rangle p^{r-1} \rfloor
\equiv
0 \pmod 2.
\end{equation*}
\end{lemma}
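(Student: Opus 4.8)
The plan is to show that the sum, taken over the units $t$ modulo $l$, is even by exhibiting a fixed-point-free involution on the index set $\{\,1\le t<l:\gcd(t,l)=1\,\}$ under which each summand changes parity by an odd amount — or, alternatively, under which pairs of summands have equal total parity, so that the whole sum is a sum of an even number of equal-parity pieces. The natural candidate involution is $t\mapsto l-t$; since $l\ge 3$, this has no fixed point on the set of units mod $l$. So the first step is to analyze how the summand $S(t):=\lfloor \langle \tfrac{t}{l}-\tfrac{j}{q-1}\rangle p^{f-1}\rfloor - \lfloor \langle \tfrac{t}{l}-\tfrac{j}{q-1}\rangle p^{r-1}\rfloor$ behaves under $t\mapsto l-t$, working modulo $2$.

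The key tool is Corollary \ref{cor_GK0} applied to $a=\langle \tfrac{t}{l}-\tfrac{j}{q-1}\rangle$. Note that $(q^{f'}-1)\bigl(\tfrac{t}{l}-\tfrac{j}{q-1}\bigr) = (q^{f'}-1)\tfrac{t}{l} - j\,\tfrac{q^{f'}-1}{q-1}$ is an integer (using $q^{f'}\equiv 1\imod l$ and $q-1\mid q^{f'}-1$), so with $f=rf'$ we have $(p^f-1)a\in\mathbb Z$, and moreover $a\ne 0$ since $q\not\equiv 1\imod l$ forces $\tfrac{t}{l}\ne\tfrac{j}{q-1}$ modulo $1$. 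Then Corollary \ref{cor_GK0} gives
\[
\sum_{k=0}^{r-1}{\langle a p^k\rangle}_0 \equiv r-(p^f-1)a+\lfloor a p^{f-1}\rfloor - \lfloor a p^{r-1}\rfloor \pmod{p-1},
\]
so modulo $2$ (since $p-1$ is even, $p$ odd) we get $S(t)=\lfloor a p^{f-1}\rfloor-\lfloor a p^{r-1}\rfloor \equiv \sum_{k=0}^{r-1}{\langle a p^k\rangle}_0 - r + (p^f-1)a \pmod 2$. Summing over $t$, the term $\sum_t\bigl(-r+(p^f-1)\langle\tfrac{t}{l}-\tfrac{j}{q-1}\rangle\bigr)$ can be handled directly: $(p^f-1)\langle\tfrac{t}{l}-\tfrac{j}{q-1}\rangle = (p^f-1)\tfrac{t}{l} - j\tfrac{p^f-1}{q-1} + (p^f-1)\lfloor\tfrac{j}{q-1}-\tfrac{t}{l}\rfloor$ and one pairs $t$ with $l-t$ (using $\langle x\rangle+\langle -x\rangle=1$ when $x\notin\mathbb Z$) to see the contribution of each pair is $\equiv -2r + (p^f-1) \equiv 0\pmod 2$. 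It remains to show $\sum_t\sum_{k=0}^{r-1}{\langle a_t p^k\rangle}_0$ is even, where $a_t=\langle\tfrac{t}{l}-\tfrac{j}{q-1}\rangle$; here one pairs $t\leftrightarrow l-t$ again and uses the reflection formula \eqref{for_pGammaOneMinus} in the form ${\langle x\rangle}_0 + {\langle -x\rangle}_0 \equiv {\langle x\rangle}_0+{\langle 1-x\rangle}_0 = p+1 \equiv 0\pmod 2$ (valid provided $\langle x\rangle\ne 0$, i.e. $p^k a_t\notin\mathbb Z$, which holds since $q\not\equiv 1\imod l$ together with $\gcd(p,l)=1$ prevents $\tfrac{t}{l}$ from being $p$-adically close to $\tfrac{j}{q-1}$ at the relevant precision). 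Thus each pair contributes an even amount and the whole double sum over $t$ and $k$ is even.

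The main obstacle I anticipate is the bookkeeping to confirm that the relevant fractional parts never vanish — i.e. that $\langle \tfrac{t}{l}-\tfrac{j}{q-1}\rangle p^k$ is never an integer for $0\le k<r$ and every unit $t$ — so that the reflection formula and Corollary \ref{cor_GK0} apply cleanly and the pairing $\langle x\rangle + \langle -x\rangle = 1$ holds with no exceptional case. This follows from $q\not\equiv 1\imod l$ and $\gcd(p,l)=1$: if $p^k\bigl(\tfrac{t}{l}-\tfrac{j}{q-1}\bigr)\in\mathbb Z$ then clearing denominators and reducing mod $l$ forces $p^k t (q-1)\equiv j l\cdot(\text{something})$ in a way that, combined with $\gcd(p^k,l)=1$ and $\gcd(t,l)=1$, would force $l\mid q-1$, a contradiction. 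Once that is pinned down, the parity argument above is essentially a matter of assembling the congruences. A secondary point to be careful about is that $f-1$ and $r-1$ need not be less than $f$ in the naive way Corollary \ref{cor_GK0} is stated — but $r-1<r\le f$ and $f-1<f$, so both floor terms are of the form $\lfloor a p^{k}\rfloor$ with $k<f$, exactly the range covered, and $\lfloor a p^{f-1}\rfloor$ is the top term appearing in the proof of Corollary \ref{cor_GK0}; no extension is needed.
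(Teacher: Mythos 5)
Your reduction via Corollary \ref{cor_GK0} is legitimate (each $a_t=\langle\tfrac{t}{l}-\tfrac{j}{q-1}\rangle$ lies strictly between $0$ and $1$ because $q\not\equiv 1\imod l$, and $(p^f-1)a_t\in\mathbb{Z}$), but the final and crucial step fails. The involution $t\mapsto l-t$ does \emph{not} interact with the reflection formula the way you claim: one has $a_{l-t}=\langle\tfrac{l-t}{l}-\tfrac{j}{q-1}\rangle=\langle-\tfrac{t}{l}-\tfrac{j}{q-1}\rangle$, which is the reflection of $\tfrac{t}{l}+\tfrac{j}{q-1}$, not of $\tfrac{t}{l}-\tfrac{j}{q-1}$; so $\langle a_{l-t}p^k\rangle=1-\langle a_t p^k\rangle$ only when $\tfrac{2j}{q-1}\in\mathbb{Z}$. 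Consequently ${\langle a_t p^k\rangle}_0+{\langle a_{l-t}p^k\rangle}_0$ need not equal $p+1$, and it need not even be even: take $p=q=7$, $l=5$, $j=2$ (so $r=1$, $k=0$); then $a_1=\tfrac{13}{15}$, $a_4=\tfrac{7}{15}$ and ${(a_1)}_0+{(a_4)}_0=6+7=13$, while the other pair gives $1+4=5$. The total over all $t$ is still even, but that global evenness is exactly the content of equation \eqref{thm_GQ_for11} in the paper, which is proved there \emph{using} Lemma \ref{lem_tl} together with Corollary \ref{cor_GK0}; so invoking it would be circular, and your proposed independent proof of it by pairing collapses because of the $j$-shift. (The same shift also invalidates your use of $\langle x\rangle+\langle-x\rangle=1$ when handling the $(p^f-1)\sum_t a_t$ term, though there the conclusion survives since $(p^f-1)(a_t+a_{l-t})\equiv-\tfrac{2j(p^f-1)}{q-1}\equiv 0\pmod 2$; note also that the reflection trick is used in the paper only for the $j$-independent product in \eqref{thm_GQ_for9}, precisely because it breaks down once $j$ is present.)

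For comparison, the paper's proof avoids $p$-adic digits altogether: it strips off the integer parts (writing $\lfloor x\rfloor=x-\langle x\rangle$ and using that $p^{r-1}(p^{f-r}-1)$ is even and $\phi(l)$ is even, as in \eqref{for1a}--\eqref{for1b}), and then shows the remaining quantity $\sum_t\langle(\tfrac{t}{l}-\tfrac{j}{q-1})p^{f-1}\rangle-\langle(\tfrac{t}{l}-\tfrac{j}{q-1})p^{r-1}\rangle$ is identically zero, using \eqref{for2} (that $\langle\tfrac{j}{q-1}p^{f-1}\rangle=\langle\tfrac{j}{q-1}p^{r-1}\rangle$, since $p^{f-r}=q^{f'-1}\equiv 1\imod{q-1}$) and the fact that multiplication by powers of $p$ permutes the units modulo $l$, which gives \eqref{for4} and \eqref{for5}. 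If you want to salvage your strategy, you would need an argument along those lines for the global evenness of $\sum_t\sum_k{\langle a_t p^k\rangle}_0$ that does not pass through Lemma \ref{lem_tl}; the pairwise reflection argument cannot supply it.
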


\begin{proof}
Let $\phi(\cdot)$ denote Euler's totient function. We note that $\phi(l)$ is even, as $l \geq 3$.
For a given $t$, let
$\langle \tfrac{t}{l} - \tfrac{j}{q-1} \rangle =  \tfrac{t}{l} - \tfrac{j}{q-1} + n_{t,j}.$
Note $ n_{t,j} \in \mathbb{Z}$.
So,
\begin{align}\label{for1a}
\notag
\sum_{\substack{1 \leq t < l \\ \gcd(t,l)=1}}
& \lfloor \langle \tfrac{t}{l} - \tfrac{j}{q-1} \rangle p^{f-1} \rfloor - \lfloor \langle \tfrac{t}{l} - \tfrac{j}{q-1} \rangle p^{r-1} \rfloor
\\ & \notag =
\sum_{\substack{1 \leq t < l \\ \gcd(t,l)=1}}
\lfloor  \bigl( \tfrac{t}{l} - \tfrac{j}{q-1} + n_{t,j} \bigr) p^{f-1} \rfloor - \lfloor \bigl( \tfrac{t}{l} - \tfrac{j}{q-1} + n_{t,j} \bigr)  p^{r-1} \rfloor
\\ & \notag =
\sum_{\substack{1 \leq t < l \\ \gcd(t,l)=1}}
\lfloor  \bigl( \tfrac{t}{l} - \tfrac{j}{q-1} \bigr) p^{f-1} \rfloor - \lfloor \bigl( \tfrac{t}{l} - \tfrac{j}{q-1}  \bigr)  p^{r-1} \rfloor 
+
p^{r-1} (p^{f-r} - 1)
\sum_{\substack{1 \leq t < l \\ \gcd(t,l)=1}}
n_{t,j}
\\ & \equiv
\sum_{\substack{1 \leq t < l \\ \gcd(t,l)=1}}
\lfloor  \bigl( \tfrac{t}{l} - \tfrac{j}{q-1} \bigr) p^{f-1} \rfloor - \lfloor \bigl( \tfrac{t}{l} - \tfrac{j}{q-1}  \bigr)  p^{r-1} \rfloor 
\pmod{2}.
\end{align}
as $p^{f-r} - 1$ is even.
Now,
\begin{align}\label{for1b}
\notag
\sum_{\substack{1 \leq t < l \\ \gcd(t,l)=1}}
& \lfloor  \bigl( \tfrac{t}{l} - \tfrac{j}{q-1} \bigr) p^{f-1} \rfloor - \lfloor \bigl( \tfrac{t}{l} - \tfrac{j}{q-1}  \bigr)  p^{r-1} \rfloor 
\\ \notag & =
\sum_{\substack{1 \leq t < l \\ \gcd(t,l)=1}}
\bigl( \tfrac{t}{l} - \tfrac{j}{q-1} \bigr) p^{f-1} 
-
\langle  \bigl( \tfrac{t}{l} - \tfrac{j}{q-1} \bigr) p^{f-1} \rangle 
- 
\bigl( \tfrac{t}{l} - \tfrac{j}{q-1}  \bigr)  p^{r-1}
+
\langle \bigl( \tfrac{t}{l} - \tfrac{j}{q-1}  \bigr)  p^{r-1} \rangle
\\ \notag & =
p^{r-1}  (p^{f-r} - 1)
\underbrace{\sum_{\substack{1 \leq t < l \\ \gcd(t,l)=1}} \tfrac{t}{l}}_{=\frac{\phi(l)}{2} \, \in \, \mathbb{Z}}
-
\, j 
p^{r-1}
\left(\tfrac{q^{f^{\prime}-1}-1}{q-1}\right)
\underbrace{\sum_{\substack{1 \leq t < l \\ \gcd(t,l)=1}} 1}_{=\phi(l) \, \in \, 2 \mathbb{Z}}
\\ \notag & \qquad \qquad \qquad \qquad \qquad \qquad \qquad \quad
-
\sum_{\substack{1 \leq t < l \\ \gcd(t,l)=1}} 
\langle  \bigl( \tfrac{t}{l} - \tfrac{j}{q-1} \bigr) p^{f-1} \rangle 
-
\langle \bigl( \tfrac{t}{l} - \tfrac{j}{q-1}  \bigr)  p^{r-1} \rangle
\\ & \equiv
\sum_{\substack{1 \leq t < l \\ \gcd(t,l)=1}} 
\langle  \bigl( \tfrac{t}{l} - \tfrac{j}{q-1} \bigr) p^{f-1} \rangle 
-
\langle \bigl( \tfrac{t}{l} - \tfrac{j}{q-1}  \bigr)  p^{r-1} \rangle
\pmod{2}.
\end{align}
So, combining (\ref{for1a}) and (\ref{for1b}), it suffices to prove that
\begin{equation*}\label{for1c}
\sum_{\substack{1 \leq t < l \\ \gcd(t,l)=1}} 
\langle  \bigl( \tfrac{t}{l} - \tfrac{j}{q-1} \bigr) p^{f-1} \rangle 
-
\langle \bigl( \tfrac{t}{l} - \tfrac{j}{q-1}  \bigr)  p^{r-1} \rangle
\equiv 0
\pmod{2}.
\end{equation*}
Now,
\begin{multline}\label{for1}
\sum_{\substack{1 \leq t < l \\ \gcd(t,l)=1}} 
\langle  \bigl( \tfrac{t}{l} - \tfrac{j}{q-1} \bigr) p^{f-1} \rangle 
-
\langle \bigl( \tfrac{t}{l} - \tfrac{j}{q-1}  \bigr)  p^{r-1} \rangle
\\ =
\sum_{\substack{1 \leq t < l \\ \gcd(t,l)=1}} 
\left(
\langle  \tfrac{t}{l} \, p^{f-1} \rangle 
-
\langle \tfrac{j}{q-1} \, p^{f-1} \rangle 
+
\delta_{t,j}(f)
\right)
-
\left(
\langle  \tfrac{t}{l} \, p^{r-1} \rangle 
-
\langle \tfrac{j}{q-1} \, p^{r-1} \rangle 
+
\delta_{t,j}(r)
\right),
\end{multline}
where
\begin{equation*}
\delta_{t,j}(a)
=
\begin{cases}
0 & \textup{if } \langle  \tfrac{t}{l} \, p^{a-1} \rangle \geq \langle \tfrac{j}{q-1} \, p^{a-1} \rangle,\\ 
1 & \textup{if } \langle  \tfrac{t}{l} \, p^{a-1} \rangle < \langle \tfrac{j}{q-1} \, p^{a-1} \rangle.
\end{cases}
\end{equation*}
We note that
\begin{equation}\label{for2}
\langle \tfrac{j}{q-1} \, p^{f-1} \rangle
=
\langle j \, \tfrac{q^{f^{\prime}-1}-1}{q-1} \, p^{r-1} + \tfrac{j}{q-1} \,  p^{r-1}  \rangle
 =
\langle \tfrac{j}{q-1} \,  p^{r-1}  \rangle.
\end{equation}
Also, as $\gcd(p,l)=1$, we have that for any non-negative integer $a$,
\begin{equation}\label{for3}
\{t \mid 1 \leq t < l, \gcd(t,l)=1 \} \stackrel{\pmod l}{\equiv} \{ t \, p^a \mid 1 \leq t < l, \gcd(t,l)=1 \}
\end{equation}
and hence
\begin{equation}\label{for4}
\sum_{\substack{1 \leq t < l \\ \gcd(t,l)=1}} 
\langle  \tfrac{t}{l} \, p^{f-1} \rangle 
=
\sum_{\substack{1 \leq t < l \\ \gcd(t,l)=1}} 
\langle  \tfrac{t}{l} \, p^{r-1} \rangle.
\end{equation}
Then, combining (\ref{for2}) and (\ref{for3}) we get that 
\begin{align}\label{for5}
\sum_{\substack{1 \leq t < l \\ \gcd(t,l)=1}} 
\notag 
\delta_{t,j}(f)
& =
| \{t \mid 1 \leq t < l, \gcd(t,l)=1, \langle  \tfrac{t}{l} \, p^{f-1} \rangle < \langle \tfrac{j}{q-1} \, p^{f-1} \rangle \}|
\\ \notag & =
| \{t \mid 1 \leq t < l, \gcd(t,l)=1, \langle  \tfrac{t}{l} \, p^{f-1} \rangle < \langle \tfrac{j}{q-1} \, p^{r-1} \rangle \}|
\\ \notag & =
| \{t \mid 1 \leq t < l, \gcd(t,l)=1, \langle  \tfrac{t}{l} \rangle < \langle \tfrac{j}{q-1} \, p^{r-1} \rangle \}|
\\ \notag & =
| \{t \mid 1 \leq t < l, \gcd(t,l)=1, \langle  \tfrac{t}{l} \, p^{r-1} \rangle < \langle \tfrac{j}{q-1} \, p^{r-1} \rangle \}|
\\ & =
\sum_{\substack{1 \leq t < l \\ \gcd(t,l)=1}} 
\delta_{t,j}(r)
\end{align}
Accounting for (\ref{for2}), (\ref{for4}) and (\ref{for5}) in (\ref{for1}) completes the proof.
\end{proof}


\section{Proofs of Main Results}\label{sec_Proofs}
\subsection{Finite Field Setting}\label{subsec_Proofs_FF}
We first prove Theorems \ref{thm_F_Main} \& \ref{thm_F_Main_Converse}.
\begin{proof}[Proof of Theorem \ref{thm_F_Main}]
We expand the summands by definition and then apply Proposition \ref{prop_Orth} and  Theorem \ref{thm_HD}, as follows:
\begin{align*}
\sum_{l=0}^{n-1}
{_{m}F_{m}} & {\biggl( \begin{array}{cccc} 
A_1^n, & A_2^n, & \dotsc, & A_m^n \\[1pt]
B_1^n, & B_2^n, & \dotsc, & B_m^n \end{array}
\Big| \; \zeta_n^l \cdot \lambda \biggr)}_{q}\\
&=
\frac{-1}{q-1} 
\sum_{l=0}^{n-1}
\sum_{\chi \in \widehat{\mathbb{F}_q^{*}}}
\prod_{i=1}^{m} \frac{g(A_i^n \chi)}{g(A_i^n)} \frac{g(\bar{B_i^n \chi})}{g(\bar{B_i^n})}
\chi(-1)^{m}
\chi(\zeta_n^l \cdot \lambda)\\
&=
\frac{-1}{q-1}  
\sum_{\chi \in \widehat{\mathbb{F}_q^{*}}}
\prod_{i=1}^{m} \frac{g(A_i^n \chi)}{g(A_i^n)} \frac{g(\bar{B_i^n \chi})}{g(\bar{B_i^n})}
\chi(-1)^{m}
\chi(\lambda)
\sum_{l=0}^{n-1}
\chi(\zeta_n^l)\\
&=
\frac{-n}{q-1}  
\sum_{\substack{\chi \in \widehat{\mathbb{F}_q^{*}}\\ \chi=\psi^n}}
\prod_{i=1}^{m} \frac{g(A_i^n \chi)}{g(A_i^n)} \frac{g(\bar{B_i^n \chi})}{g(\bar{B_i^n})}
\chi(-1)^{m}
\chi(\lambda)\\
&=
\frac{-1}{q-1}  
\sum_{\psi \in \widehat{\mathbb{F}_q^{*}}}
\prod_{i=1}^{m} \frac{g(A_i^n \psi^n)}{g(A_i^n)} \frac{g(\bar{B_i^n \psi^n})}{g(\bar{B_i^n})}
\psi(-1)^{nm}
\psi(\lambda^n)\\
&=
\frac{-1}{q-1}  
\sum_{\psi \in \widehat{\mathbb{F}_q^{*}}}
\prod_{i=1}^{m}
\prod_{l=0}^{n-1}
\frac{g(\chi_n^l A_i \psi)}{g(\chi_n^l A_i)} 
\frac{g(\bar{\chi_n^l B_i \psi})}{g(\bar{\chi_n^l B_i})}
\psi(-1)^{nm}
\psi(\lambda^n)\\
&=
{_{nm}F_{nm}} {\biggl( \begin{array}{c} 
A_i \, \chi_n^l \, : 1 \leq i \leq m, 0 \leq l \leq n-1\\[1pt]
B_i \, \chi_n^l \, : 1 \leq i \leq m, 0 \leq l \leq n-1 \end{array}
\Big| \; \lambda^n \biggr)}_{q}.
\end{align*}
\end{proof}

\begin{proof}[Proof of Theorem \ref{thm_F_Main_Converse}]
By Definition \ref{def_F}, we have
\begin{multline}\label{proof_F_Main_Converse_for1}
{_{nm}F_{nm}} {\biggl( \begin{array}{c} 
A_i \,\chi_n^l \, : 1 \leq i \leq m, 0 \leq l \leq n-1  \\[1pt]
B_i \, \chi_n^l  \, : 1 \leq i \leq m, 0 \leq l \leq n-1 \end{array}
\Big| \; \lambda \biggr)}_{q}\\
=
\frac{-1}{q-1}  
\sum_{\chi \in \widehat{\mathbb{F}_q^{*}}}
\prod_{i=1}^{m}
\prod_{l=0}^{n-1}
\frac{g(\chi_n^l A_i \chi)}{g(\chi_n^l A_i)} 
\frac{g(\bar{\chi_n^l B_i \chi})}{g(\bar{\chi_n^l B_i})}
\chi(-1)^{nm}
\chi(\lambda).
\end{multline}
Making the change of variable $\chi \rightarrow \chi\chi_n$ in the right hand side of (\ref{proof_F_Main_Converse_for1}) gives us
\begin{multline*}
{_{nm}F_{nm}} {\biggl( \begin{array}{c} 
A_i \,\chi_n^l \, : 1 \leq i \leq m, 0 \leq l \leq n-1  \\[1pt]
B_i \, \chi_n^l  \, : 1 \leq i \leq m, 0 \leq l \leq n-1 \end{array}
\Big| \; \lambda \biggr)}_{q}\\
=
\chi_n(\lambda) \cdot
{_{nm}F_{nm}} {\biggl( \begin{array}{c} 
A_i \,\chi_n^l \, : 1 \leq i \leq m, 0 \leq l \leq n-1  \\[1pt]
B_i \, \chi_n^l  \, : 1 \leq i \leq m, 0 \leq l \leq n-1 \end{array}
\Big| \; \lambda \biggr)}_{q}.
\end{multline*}
Using the fact that $\chi_n(\lambda)=1$ if and only if $\lambda$ is an $n$-th power, completes the proof.
\end{proof}


\subsection{$p$-adic Setting}\label{subsec_Proofs_padic}
We now prove Lemma \ref{lem_G_to_F} and Theorems \ref{thm_GQ}-\ref{thm_G_Main_Converse}.
\begin{proof}[Proof of Lemma \ref{lem_G_to_F}]
The case when $q=p$ was proven in \cite[Lemma 3.3]{McC7} and the same approach works here. 
If we let $\chi= \omega^j$, $A_i=\bar{\omega}^{a_i(q-1)}$ and $B_i=\bar{\omega}^{b_i(q-1)}$, then it is straightforward to show, using the Gross-Koblitz formula, Theorem \ref{thm_GrossKoblitz}, that
\begin{multline}\label{proof_lem_G_to_F_for1}
\frac{g(A_i \chi)}{g(A_i)}
\frac{g(\bar{B_i \chi})}{g(\bar{B_i})}\\
=
\prod_{k=0}^{r-1} 
\frac{\biggfp{\langle (a_i -\frac{j}{q-1})p^k \rangle}}{\biggfp{\langle a_i p^k \rangle}}
\frac{\biggfp{\langle (-b_i +\frac{j}{q-1}) p^k \rangle}}{\biggfp{\langle -b_i p^k\rangle}}
(-p)^{-\lfloor{\langle a_i p^k \rangle -\frac{j p^k}{q-1}}\rfloor-\lfloor{\langle -b_i p^k\rangle +\frac{j p^k}{q-1}}\rfloor}.
\end{multline}
Substituting for (\ref{proof_lem_G_to_F_for1}) and $\chi= \omega^j$ in Definition \ref{def_Gq} yields the result.
\end{proof}

\begin{proof}[Proof of Theorem \ref{thm_GQ}]
Let $(\{a_i\}, \{b_i\})$ be defined over $\mathbb{Q}$ with corresponding exponents $(\{p_i : 1 \leq i \leq t \}, \{q_i  : 1 \leq i \leq s \})$, i.e.,
$$\prod_{i=1}^{m} \frac{x- e^{2 \pi \I a_i}}{x- e^{2 \pi \I b_i}} = \frac{\prod_{i=1}^{t} x^{p_i} - 1}{\prod_{i=1}^{s} x^{q_i} - 1},$$
with $t$ and $s$ minimal. Recall, $D(x) := \gcd({\prod_{i=1}^{t} x^{p_i} - 1},{\prod_{i=1}^{s} x^{q_i} - 1})$, of degree $\delta$, has zeros $\exp \big(\frac{2 \pi \I \, c_h}{q-1}\bigr)$ for $1 \leq h \leq \delta$ and $\mathfrak{s}(c)$ denotes the multiplicity of the zero $\exp \bigl(\frac{2 \pi \I \, c}{q-1}\bigr)$.
Then
\begin{align}\label{thm_GQ_for1}
\notag 
\prod_{i=1}^{m} &
\frac{\biggfp{\langle (a_i -\frac{j}{q-1} )p^k \rangle}}{\biggfp{\langle a_i p^k \rangle}}
\frac{\biggfp{\langle (-b_i +\frac{j}{q-1}) p^k \rangle}}{\biggfp{\langle -b_i p^k\rangle}}
(-p)^{-\lfloor{\langle a_i p^k \rangle -\frac{j p^k}{q-1}}\rfloor -\lfloor{\langle -b_i p^k\rangle +\frac{j p^k}{q-1}}\rfloor}\\
\notag 
&=
\prod_{i=1}^{t} 
\prod_{h=0}^{p_i-1} 
\frac{\biggfp{\langle (\frac{h}{p_i} -\frac{j}{q-1} )p^k \rangle}}{\biggfp{\langle \frac{h}{p_i} p^k \rangle}}
(-p)^{-\lfloor{\langle \frac{h}{p_i} p^k \rangle -\frac{j p^k}{q-1}}\rfloor}\\
\notag
&\quad \times
\prod_{i=1}^{s} 
\prod_{h=0}^{q_i-1} 
\frac{\biggfp{\langle (-\frac{h}{q_i} + \frac{j}{q-1} )p^k \rangle}}{\biggfp{\langle -\frac{h}{q_i} p^k \rangle}}
(-p)^{-\lfloor{\langle -\frac{h}{q_i}  p^k\rangle +\frac{j p^k}{q-1}}\rfloor}\\
&\quad \times
\prod_{h=1}^{\delta}
\frac{\biggfp{\langle \frac{c_h}{q-1} p^k \rangle}}{\biggfp{\langle (\frac{c_h}{q-1} -\frac{j}{q-1} )p^k \rangle}}
\frac{\biggfp{\langle -\frac{c_h}{q-1} p^k \rangle}}{\biggfp{\langle (-\frac{c_h}{q-1} +\frac{j}{q-1} )p^k \rangle}}
(-p)^{\lfloor{\langle \frac{c_h}{q-1} p^k \rangle -\frac{j p^k}{q-1}}\rfloor + \lfloor{\langle -\frac{c_h}{q-1} p^k\rangle +\frac{j p^k}{q-1}}\rfloor}.
\end{align}
For a given $i$, with $1\leq i \leq t$, choose $l \in \mathbb{Z}$ such that 
$0 \leq l - \frac{j p_i}{q-1} < 1$.
Then, noting that $\{l+h \mid h=0,\dots, p_i-1\} \equiv \{h \mid h=0,\dots,p_i-1\} \imod{p_i}$ and applying Theorem \ref{thm_GrossKoblitzMult} with $n= p_i$ and $x = l - \frac{j p_i}{q-1}$ we get that
\begin{align}\label{thm_GQ_for2}
\notag 
\prod_{k=0}^{r-1} 
\prod_{h=0}^{p_i-1} 
\frac{\biggfp{\langle (\frac{h}{p_i} -\frac{j}{q-1} )p^k \rangle}}{\biggfp{\langle \frac{h}{p_i} p^k \rangle}}
\notag 
&=
\prod_{k=0}^{r-1} 
\prod_{h=0}^{p_i-1} 
\frac{\biggfp{\langle \Bigl(\frac{h - \frac{j p_i}{q-1}}{p_i} \Bigr) p^k \rangle}}{\biggfp{\langle \frac{h}{p_i} p^k \rangle}}\\
\notag 
&=
\prod_{k=0}^{r-1} 
\prod_{h=0}^{p_i-1} 
\frac{\biggfp{\langle \Bigl(\frac{l + h - \frac{j p_i}{q-1}}{p_i} \Bigr) p^k \rangle}}{\biggfp{\langle \frac{h}{p_i} p^k \rangle}}\\
\notag 
&=
\omega \Bigl(p_i^{(q-1)(l - \frac{j p_i}{q-1})}\Bigr)
\prod_{k=0}^{r-1} \gfp{\langle \bigl(l - \tfrac{j p_i}{q-1}\bigr) p^k \rangle}\\
&=
\omega \bigl(p_i^{- j p_i}\bigr)
\prod_{k=0}^{r-1} \gfp{\langle - \tfrac{j p_i}{q-1} p^k \rangle}.
\end{align}
Similarly, with $0 \leq \frac{j q_i}{q-1} -l < 1$, we have
\begin{equation}\label{thm_GQ_for3}
\prod_{k=0}^{r-1} 
\prod_{h=0}^{q_i-1} 
\frac{\biggfp{\langle (-\frac{h}{q_i} + \frac{j}{q-1} )p^k \rangle}}{\biggfp{\langle -\frac{h}{q_i} p^k \rangle}}
=
\omega \bigl(q_i^{ j q_i}\bigr)
\prod_{k=0}^{r-1} \gfp{\langle \tfrac{j q_i}{q-1} p^k \rangle}.
\end{equation}

As $a_i, b_i \in \mathbb{Z}_p$, $p$ does not divide their denominators and, hence, $\gcd(p,p_i)=\gcd(p,q_i)=1$. 
So $\{hp^k \mid h=0,\dots, p_i-1\} \equiv \{h \mid h=0,\dots,p_i-1\} \imod{p_i}$ and  
\begin{align}\label{thm_GQ_for4}
\sum_{h=0}^{p_i-1} \lfloor{\langle \tfrac{h}{p_i} p^k \rangle -\tfrac{j p^k}{q-1}}\rfloor
=
\sum_{h=0}^{p_i-1} \lfloor{\langle \tfrac{h}{p_i} \rangle -\tfrac{j p^k}{q-1}}\rfloor
=
\sum_{h=0}^{p_i-1} \lfloor{ \tfrac{h}{p_i} -\tfrac{j p^k}{q-1}}\rfloor
= 
\lfloor{ \tfrac{-j p_i}{q-1} p^k} \rfloor,
\end{align}
where we have used Hermite's identity:  for a positive integer $m$, $\left\lfloor mx \right\rfloor = \sum_{h=0}^{m-1} \left\lfloor x+ \frac{h}{m}  \right\rfloor$.
Similarly,
\begin{align}\label{thm_GQ_for5}
\sum_{h=0}^{q_i-1} \lfloor{\langle -\tfrac{h}{q_i}  p^k\rangle +\tfrac{j p^k}{q-1}}\rfloor
=
\lfloor{ \tfrac{j q_i}{q-1} p^k} \rfloor.
\end{align}

Accounting for (\ref{thm_GQ_for2})-(\ref{thm_GQ_for5}) in (\ref{thm_GQ_for1}) and substituting the result into Definition \ref{def_Gq}, we see that it now suffices to prove
\begin{multline}\label{thm_GQ_for6}
\prod_{h=1}^{\delta}
\prod_{k=0}^{r-1}
\frac{\biggfp{\langle \frac{c_h}{q-1} p^k \rangle}}{\biggfp{\langle (\frac{c_h}{q-1} -\frac{j}{q-1} )p^k \rangle}}
\frac{\biggfp{\langle -\frac{c_h}{q-1} p^k \rangle}}{\biggfp{\langle (-\frac{c_h}{q-1} +\frac{j}{q-1} )p^k \rangle}}
(-p)^{\lfloor{\langle \frac{c_h}{q-1} p^k \rangle -\frac{j p^k}{q-1}}\rfloor + \lfloor{\langle -\frac{c_h}{q-1} p^k\rangle +\frac{j p^k}{q-1}}\rfloor}\\
=
q^{-\mathfrak{s}(0)+\mathfrak{s}(j)} \, (-1)^{j\delta}.
\end{multline}

$D(x)$ is necessarily the product of some cyclotomic polynomials, i.e.,
$D(x) = \prod_{l \in S} \Phi_l(x)$ for some set $S$ of positive integers coprime to $p$, where $\Phi_l(x)$ denotes the $l$-th cyclotomic polynomial.
Then the zeros of $D(x)$ are necessarily all the zeros of these $\Phi_l(x)$. 
If $l=1$ or $l=2$, then $\Phi_l(x)$ has only one zero corresponding to $\exp \big(\frac{2 \pi \I \, c}{q-1}\bigr)$ with $c=0$ or $c=\frac{q-1}{2}$, respectively, which are both integers. So, if $c_h \not\in \mathbb{Z}$, then $\exp \big(\frac{2 \pi \I \, c_h}{q-1}\bigr)$ is a zero of a cyclotomic polynomial $\Phi_l(x)$ with $l\geq 3$. 
The zeros of $\Phi_l(x)$ are $\{ \exp \big(\frac{2 \pi \I \, t}{l}\bigr) | 1 \leq t < l, \, \gcd(t,l)=1 \}$.
Now,
$\exp \big(\frac{2 \pi \I \, c_h}{q-1}\bigr) = \exp \big(\frac{2 \pi \I \, t}{l}\bigr)$, with $c_h \in \mathbb{Z}$, if and only if $q \equiv 1 \imod l$. 
In which case, all zeros of $\Phi_l(x)$ can be written as $\exp \big(\frac{2 \pi \I \, c}{q-1}\bigr)$ for $c \in \mathbb{Z}$.
Conversely, if $q \not\equiv 1 \imod l$ then all zeros, when written in the form $\exp \big(\frac{2 \pi \I \, c}{q-1}\bigr)$ will have $c \not\in \mathbb{Z}$.
Consequently, if we let $S^{\prime}$ be the subset of $S$ containing all $l$ such that $q \not\equiv 1 \imod l$, then
\begin{equation*}
\left\{\frac{c_h}{q-1} \mid c_h \not\in \mathbb{Z} \right\}
=
\displaystyle \bigcup_{l \in S^{\prime}}
\left\{\frac{t}{l} \mid 1 \leq t < l, \gcd(t,l)=1 \right\}.
\end{equation*}
If we let $\delta = \delta_1 +\delta_2$, where
$\delta_1= |\{h \mid c_h \in \mathbb{Z} \} |$
and 
$\delta_2= |\{h \mid c_h \notin \mathbb{Z} \} |$,
then 
$\delta_2 = \sum_{l \in S^{\prime}} \phi(l)$ is even, as $l \geq 3$ for all $l \in S^{\prime}$
(here $\phi$ is Euler's totient function), and, $\delta \equiv \delta_1 \imod 2$.

We now examine (\ref{thm_GQ_for6}) when $c_h \in \mathbb{Z}$. 
Straightforward applications of the Gross-Koblitz formula (Theorem \ref{thm_GrossKoblitz}) and (\ref{for_GaussConj}) yield
\begin{multline}\label{thm_GQ_for7}
\prod_{\substack{h=1 \\ c_h \in \mathbb{Z}}}^{\delta}
\prod_{k=0}^{r-1}
\frac{\biggfp{\langle \frac{c_h}{q-1} p^k \rangle}}{\biggfp{\langle (\frac{c_h}{q-1} -\frac{j}{q-1} )p^k \rangle}}
\frac{\biggfp{\langle -\frac{c_h}{q-1} p^k \rangle}}{\biggfp{\langle (-\frac{c_h}{q-1} +\frac{j}{q-1} )p^k \rangle}}
(-p)^{\lfloor{\langle \frac{c_h}{q-1} p^k \rangle -\frac{j p^k}{q-1}}\rfloor + \lfloor{\langle -\frac{c_h}{q-1} p^k\rangle +\frac{j p^k}{q-1}}\rfloor}
\\ =
\prod_{\substack{h=1 \\ c_h \in \mathbb{Z}}}^{\delta}
\frac{g(\bar{\omega}^{c_h}) g(\bar{\omega}^{-c_h})}{g(\bar{\omega}^{c_h-j}) g(\bar{\omega}^{-c_h+j})} 
= 
q^{-\mathfrak{s}(0)+\mathfrak{s}(j)} \, \bar{\omega}(-1)^{j\delta_1}
= 
q^{-\mathfrak{s}(0)+\mathfrak{s}(j)} \, (-1)^{j\delta}.
\end{multline}

Next we examine (\ref{thm_GQ_for6}) when $c_h \notin \mathbb{Z}$.
As $c_h \neq 0$, we have $\frac{c_h p^k}{q-1} \not\in \mathbb{Z}$ and so 
$\langle -\frac{c_h p^k}{q-1} \rangle = 1 - \langle \frac{c_h p^k}{q-1} \rangle$.
Similarly, $c_h \neq j$ implies $(\frac{c_h}{q-1} -\frac{j}{q-1} )p^k \not\in \mathbb{Z}$.
Therefore, as $\lfloor x \rfloor  + \lfloor -x \rfloor = -1$ when $x \not\in \mathbb{Z}$,
\begin{align}\label{thm_GQ_for8}
\notag
\lfloor{\langle \tfrac{c_h p^k}{q-1}  \rangle -\tfrac{j p^k}{q-1}}\rfloor + \lfloor{\langle -\tfrac{c_h p^k}{q-1} \rangle +\tfrac{j p^k}{q-1}}\rfloor
&=
\lfloor{\langle \tfrac{c_h p^k}{q-1}  \rangle -\tfrac{j p^k}{q-1}}\rfloor + \lfloor{1 - \langle \tfrac{c_h p^k}{q-1} \rangle +\tfrac{j p^k}{q-1}}\rfloor
\\ \notag & =
\lfloor{\tfrac{c_h p^k}{q-1} - \lfloor \tfrac{c_h p^k}{q-1} \rfloor -\tfrac{j p^k}{q-1}}\rfloor + \lfloor{1 - \tfrac{c_h p^k}{q-1} + \lfloor \tfrac{c_h p^k}{q-1} \rfloor +\tfrac{j p^k}{q-1}}\rfloor
\\ & =
1 + \lfloor{\tfrac{c_h p^k}{q-1} -\tfrac{j p^k}{q-1}}\rfloor + \lfloor{- \tfrac{c_h p^k}{q-1} +\tfrac{j p^k}{q-1}}\rfloor = 0.
\end{align}

Applying (\ref{for_pGammaOneMinus}) we see that
\begin{align}\label{thm_GQ_for9}
\notag 
\prod_{\substack{h=1 \\ c_h \notin \mathbb{Z}}}^{\delta}
\prod_{k=0}^{r-1}
&
\biggfp{\langle \tfrac{c_h}{q-1} p^k \rangle}
\biggfp{\langle -\tfrac{c_h}{q-1} p^k \rangle}
\\ \notag &=
\prod_{k=0}^{r-1} \,
\prod_{l \in S^{\prime}}
\prod_{\substack{1 \leq t < l \\ \gcd(t,l)=1}}
\biggfp{\langle \tfrac{t}{l} p^k \rangle}
\biggfp{\langle -\tfrac{t}{l} p^k \rangle}
\\ \notag &=
\prod_{k=0}^{r-1} \,
\prod_{l \in S^{\prime}}
\prod_{\substack{1 \leq t < \frac{l}{2} \\ \gcd(t,l)=1}}
\biggfp{\langle \tfrac{t}{l} p^k \rangle}
\biggfp{\langle -\tfrac{t}{l} p^k \rangle}
\biggfp{\langle (1-\tfrac{t}{l}) p^k \rangle}
\biggfp{\langle -(1-\tfrac{t}{l}) p^k \rangle}
\\ \notag &=
\prod_{k=0}^{r-1} \,
\prod_{l \in S^{\prime}}
\prod_{\substack{1 \leq t < \frac{l}{2} \\ \gcd(t,l)=1}}
\left(
\biggfp{\langle \tfrac{t}{l} p^k \rangle}
\biggfp{\langle -\tfrac{t}{l} p^k \rangle}
\right)^2
\\ &=
\prod_{k=0}^{r-1} \,
\prod_{l \in S^{\prime}}
\prod_{\substack{1 \leq t < \frac{l}{2} \\ \gcd(t,l)=1}}
\left(
\biggfp{\langle \tfrac{t}{l} p^k \rangle}
\biggfp{1 - \langle \tfrac{t}{l} p^k \rangle}
\right)^2 = 1.
\end{align}

Similarly,
\begin{align}\label{thm_GQ_for10}
\notag
\prod_{\substack{h=1 \\ c_h \notin \mathbb{Z}}}^{\delta}
\prod_{k=0}^{r-1}
&
\biggfp{\langle (\tfrac{c_h}{q-1} -\tfrac{j}{q-1} )p^k \rangle}
\biggfp{\langle (-\tfrac{c_h}{q-1} +\tfrac{j}{q-1} )p^k \rangle}
\\ \notag &=
\prod_{k=0}^{r-1} \,
\prod_{l \in S^{\prime}}
\prod_{\substack{1 \leq t < l \\ \gcd(t,l)=1}}
\biggfp{\langle (\tfrac{t}{l} -\tfrac{j}{q-1} )p^k \rangle}
\biggfp{\langle (-\tfrac{t}{l} +\tfrac{j}{q-1} )p^k \rangle}
\\ \notag &=
\prod_{k=0}^{r-1} \,
\prod_{l \in S^{\prime}}
\prod_{\substack{1 \leq t < l \\ \gcd(t,l)=1}}
\biggfp{\langle (\tfrac{t}{l} -\tfrac{j}{q-1} )p^k \rangle}
\biggfp{1 - \langle (\tfrac{t}{l} -\tfrac{j}{q-1} )p^k \rangle}
\\ &=
\prod_{k=0}^{r-1} \,
\prod_{l \in S^{\prime}}
\prod_{\substack{1 \leq t < l \\ \gcd(t,l)=1}}
(-1)^{{\langle (\tfrac{t}{l} -\tfrac{j}{q-1} )p^k \rangle}_0}.
\end{align}

For a given $l \in S^{\prime}$, let $f^{\prime} \in \mathbb{Z}^{+}$ be chosen such that $q^{f^{\prime}} \equiv 1 \imod l$. We note that $f^{\prime}>1$. We let $f = r f^{\prime}$. Then $(p^f-1) {\langle \tfrac{t}{l} -\tfrac{j}{q-1} \rangle} \in \mathbb{Z}$.
So, by Corollary \ref{cor_GK0}, with $a = \langle \tfrac{t}{l} - \tfrac{j}{q-1} \rangle$, and Lemma \ref{lem_tl} we have that
\begin{align}\label{thm_GQ_for11}
\notag
\sum_{\substack{1 \leq t < l \\ \gcd(t,l)=1}}
&
\sum_{k=0}^{r-1}
\bigl( \langle (\tfrac{t}{l} - \tfrac{j}{q-1} )p^k \rangle \bigr)_0
\\ \notag & =
\sum_{\substack{1 \leq t < l \\ \gcd(t,l)=1}}
\sum_{k=0}^{r-1}
\bigl( \langle \langle \tfrac{t}{l} - \tfrac{j}{q-1} \rangle p^k \rangle \bigr)_0
\\ \notag & \equiv
\sum_{\substack{1 \leq t < l \\ \gcd(t,l)=1}}
r -(p^f-1)\langle \tfrac{t}{l} - \tfrac{j}{q-1} \rangle + \lfloor \langle \tfrac{t}{l} - \tfrac{j}{q-1} \rangle p^{f-1} \rfloor - \lfloor \langle \tfrac{t}{l} - \tfrac{j}{q-1} \rangle p^{r-1} \rfloor
\\ \notag & \equiv
\phi(l) \, r - \sum_{\substack{1 \leq t < l \\ \gcd(t,l)=1}} (p^f-1)\langle \tfrac{t}{l} - \tfrac{j}{q-1} \rangle
\\ \notag & \equiv
\sum_{\substack{1 \leq t < \frac{l}{2} \\ \gcd(t,l)=1}}
(p^f-1) 
\left( \langle \tfrac{t}{l} - \tfrac{j}{q-1} \rangle
+
\langle 1 -\tfrac{t}{l} - \tfrac{j}{q-1} \rangle \right)
\\ \notag & \equiv
\sum_{\substack{1 \leq t < \frac{l}{2} \\ \gcd(t,l)=1}}
(p^f-1) 
\left(
\langle \tfrac{t}{l} - \tfrac{j}{q-1} \rangle
+
\langle -\tfrac{t}{l} - \tfrac{j}{q-1} \rangle
\right)
\\ \notag & \equiv
\sum_{\substack{1 \leq t < \frac{l}{2} \\ \gcd(t,l)=1}}
(p^f-1) 
\left(
\langle \tfrac{t}{l} - \tfrac{j}{q-1} \rangle
+
1 - \langle \tfrac{t}{l} + \tfrac{j}{q-1} \rangle
\right)
\\ \notag & \equiv
\sum_{\substack{1 \leq t < \frac{l}{2} \\ \gcd(t,l)=1}}
(p^f-1) 
\left(\tfrac{t}{l} - \tfrac{j}{q-1} -  \lfloor \tfrac{t}{l} - \tfrac{j}{q-1} \rfloor
 - \tfrac{t}{l} - \tfrac{j}{q-1} + \lfloor \tfrac{t}{l} + \tfrac{j}{q-1} \rfloor +1 \right)
 \\ \notag & \equiv
\sum_{\substack{1 \leq t < \frac{l}{2} \\ \gcd(t,l)=1}}
(p^f-1) 
\left( - \tfrac{2j}{q-1} -  \lfloor \tfrac{t}{l} - \tfrac{j}{q-1} \rfloor
 + \lfloor \tfrac{t}{l} + \tfrac{j}{q-1} \rfloor  + 1\right)
 \\ & \equiv 
0 \pmod 2, 
\end{align}
as $p^f-1$ is even and $(p^f-1) \tfrac{j}{q-1} = (q^{f^{\prime}}-1) \tfrac{j}{q-1} \in \mathbb{Z}$.
Combining (\ref{thm_GQ_for8})-(\ref{thm_GQ_for11})
we get that
\begin{equation}\label{thm_GQ_for12}
\prod_{\substack{h=1 \\ c_h \notin \mathbb{Z}}}^{\delta}
\prod_{k=0}^{r-1}
\frac{\biggfp{\langle \frac{c_h}{q-1} p^k \rangle}}{\biggfp{\langle (\frac{c_h}{q-1} -\frac{j}{q-1} )p^k \rangle}}
\frac{\biggfp{\langle -\frac{c_h}{q-1} p^k \rangle}}{\biggfp{\langle (-\frac{c_h}{q-1} +\frac{j}{q-1} )p^k \rangle}}
(-p)^{\lfloor{\langle \frac{c_h}{q-1} p^k \rangle -\frac{j p^k}{q-1}}\rfloor + \lfloor{\langle -\frac{c_h}{q-1} p^k\rangle +\frac{j p^k}{q-1}}\rfloor}
= 1.
\end{equation}
The product of (\ref{thm_GQ_for7}) and (\ref{thm_GQ_for12}) establishes (\ref{thm_GQ_for6}), which completes the proof.

\end{proof}

\begin{proof}[Proof of Theorem \ref{thm_G_Main}]
Let $(\{na_i\}, \{nb_i\})$ be defined over $\mathbb{Q}$ with exponents $(\{p_i : 1 \leq i \leq t \}, \{q_i  : 1 \leq i \leq s \})$.
Then, $(\{a_i+\frac{l}{n}\}, \{b_i+\frac{l}{n}\})$ are defined over $\mathbb{Q}$ with exponents $(\{n p_i  : 1 \leq i \leq t \}, \{n q_i : 1 \leq i \leq s\})$ as
\begin{align*}
\prod_{i=1}^{m} \prod_{l=0}^{n-1}  \frac{x- e^{2 \pi \I (a_i+\frac{l}{n})}}{x- e^{2 \pi \I (b_i +\frac{l}{n})}}
=
\prod_{i=1}^{m} \prod_{l=0}^{n-1}  \frac{x- e^{2 \pi \I a_i} e^{2 \pi \I \frac{l}{n}}}{x- e^{2 \pi \I b_i} e^{2 \pi \I \frac{l}{n}}}
= 
\prod_{i=1}^{m} \frac{x^n- e^{2 \pi \I n a_i}}{x^n- e^{2 \pi \I n b_i}}
=
\frac{\prod_{i=1}^{t} x^{np_i} - 1}{\prod_{i=1}^{s} x^{nq_i} - 1}.
\end{align*}
We note that
\begin{align*}
M_n:=\frac{\prod_{i=1}^{t}{(np_i)}^{np_i}}{\prod_{i=1}^{s}{(nq_i)}^{nq_i}}
=
n^{n(\sum_{i=1}^{t} p_i - \sum_{i=1}^{s} s_i)} \cdot \frac{\prod_{i=1}^{t}{p_i}^{np_i}}{\prod_{i=1}^{s}{q_i}^{nq_i}}
=
\left(\frac{\prod_{i=1}^{t}{p_i}^{p_i}}{\prod_{i=1}^{s}{q_i}^{q_i}}\right)^n
=
M^n.
\end{align*}
We also note that if $D(x) = \gcd({\prod_{i=1}^{t} x^{p_i} - 1},{\prod_{i=1}^{s} x^{q_i} - 1})$ has degree $\delta$, then
$D_n(x) := \gcd({\prod_{i=1}^{t} x^{np_i} - 1},{\prod_{i=1}^{s} x^{nq_i} - 1})$ has degree $n \delta$.
If $z=\exp\left(\frac{2 \pi \I \, c}{q-1}\right)$ is a zero of $D_n(x)$ with multiplicity $\mathfrak{s}_n(c)$ then $z^n=\exp\left(\frac{2 \pi \I \, n c}{q-1}\right)$ is a zero of $D(x)$ with the same multiplicity, i.e. $\mathfrak{s}(nc)=\mathfrak{s}_n(c)$.

As $(\{na_i\}, \{nb_i\})$ are defined over $\mathbb{Q}$ we use Theorem \ref{thm_GQ} expand the summands and then apply Proposition \ref{prop_Orth}, with $\chi=\bar{\omega}^j$, to get
\begin{align*}
\sum_{l=0}^{n-1}
{_{m}G_{m}} &
\biggl[ \begin{array}{cccc} n a_1, & n a_2, & \dotsc, & n a_m \\[2pt]
 n b_1, & n b_2, & \dotsc, & n b_m \end{array}
\Big| \; \zeta_n^l \cdot \lambda \; \biggr]_q\\
&=
\frac{-1}{q-1}  \sum_{j=0}^{q-2} 
(-1)^{j(m+\delta)}\;
q^{-\mathfrak{s}(0)+\mathfrak{s}(j)}\;
\bar{\omega}^j(M \cdot \lambda)\\
& \qquad \times
\prod_{k=0}^{r-1} 
\prod_{i=1}^{t}  
\biggfp{\langle \tfrac{-j p_i}{q-1} p^k \rangle}
(-p)^{\lfloor \tfrac{-j p_i}{q-1} p^k \rfloor}
\prod_{i=1}^{s}  
\biggfp{\langle \tfrac{j q_i}{q-1} p^k \rangle}
(-p)^{\lfloor \tfrac{j q_i}{q-1} p^k \rfloor}\\
&\qquad \times
\sum_{l=0}^{n-1} \bar{\omega}^j(\zeta_n^l)
\\
&=
\frac{-n}{q-1}  \sum_{\substack{j=0 \\ j\equiv 0 \imod n}}^{q-2} 
(-1)^{j(m+\delta)}\;
q^{-\mathfrak{s}(0)+\mathfrak{s}(j)}\;
\bar{\omega}^j(M \cdot \lambda)\\
& \qquad \times
\prod_{k=0}^{r-1} 
\prod_{i=1}^{t}  
\biggfp{\langle \tfrac{-j p_i}{q-1} p^k \rangle}
(-p)^{\lfloor \tfrac{-j p_i}{q-1} p^k \rfloor}
\prod_{i=1}^{s}  
\biggfp{\langle \tfrac{j q_i}{q-1} p^k \rangle}
(-p)^{\lfloor \tfrac{j q_i}{q-1} p^k \rfloor}\\
&=
\frac{-n}{q-1}  \sum_{j=0}^{\frac{q-1}{n}-1} 
(-1)^{jn(m+\delta)}\;
q^{-\mathfrak{s}(0)+\mathfrak{s}(jn)}\;
\bar{\omega}^{jn}(M \cdot \lambda)\\
& \qquad \times
\prod_{k=0}^{r-1} 
\prod_{i=1}^{t}  
\biggfp{\langle \tfrac{-j n p_i}{q-1} p^k \rangle}
(-p)^{\lfloor \tfrac{-j n p_i}{q-1} p^k \rfloor}
\prod_{i=1}^{s}  
\biggfp{\langle \tfrac{j n q_i}{q-1} p^k \rangle}
(-p)^{\lfloor \tfrac{j n q_i}{q-1} p^k \rfloor}.
\end{align*}
If we denote the summand of the last expression above as $f(j)$ then it is easy to show that $f(j+b(\frac{q-1}{n}))=f(j)$ for all $0\leq b \leq n-1$.
Thus $\sum_{j=0}^{\frac{q-1}{n}-1} f(j) = \frac{1}{n} \sum_{j=0}^{q-2} f(j)$. So,
\begin{align*}
\sum_{l=0}^{n-1}
{_{m}G_{m}} &
\biggl[ \begin{array}{cccc} n a_1, & n a_2, & \dotsc, & n a_m \\[2pt]
 n b_1, & n b_2, & \dotsc, & n b_m \end{array}
\Big| \; \zeta_n^l \cdot \lambda \; \biggr]_q\\
&=
\frac{-1}{q-1}  \sum_{j=0}^{q-2} 
(-1)^{jn(m+\delta)}\;
q^{-\mathfrak{s}(0)+\mathfrak{s}(jn)}\;
\bar{\omega}^{jn}(M \cdot \lambda)\\
& \qquad \times
\prod_{k=0}^{r-1} 
\prod_{i=1}^{t}  
\biggfp{\langle \tfrac{-j n p_i}{q-1} p^k \rangle}
(-p)^{\lfloor \tfrac{-j n p_i}{q-1} p^k \rfloor}
\prod_{i=1}^{s}  
\biggfp{\langle \tfrac{j n q_i}{q-1} p^k \rangle}
(-p)^{\lfloor \tfrac{j n q_i}{q-1} p^k \rfloor}\\
&=
\frac{-1}{q-1}  \sum_{j=0}^{q-2} 
(-1)^{j(nm+n\delta)}\;
q^{-\mathfrak{s}_n(0)+\mathfrak{s}_n(j)}\;
\bar{\omega}^{j}(M_n \cdot \lambda^n)\\
& \qquad \times
\prod_{k=0}^{r-1} 
\prod_{i=1}^{t}  
\biggfp{\langle \tfrac{-j n p_i}{q-1} p^k \rangle}
(-p)^{\lfloor \tfrac{-j n p_i}{q-1} p^k \rfloor}
\prod_{i=1}^{s}  
\biggfp{\langle \tfrac{j n q_i}{q-1} p^k \rangle}
(-p)^{\lfloor \tfrac{j n q_i}{q-1} p^k \rfloor}\\
&=
{_{nm}G_{nm}}
\biggl[ \begin{array}{c} 
a_i+\frac{l}{n} : 1 \leq i \leq m, 0 \leq l \leq n-1 \\[2pt]
b_i+\frac{l}{n} : 1 \leq i \leq m, 0 \leq l \leq n-1 \end{array}
\Big| \; \lambda^n \; \biggr]_q.
\end{align*}
\end{proof}

\begin{proof}[Proof of Theorem \ref{thm_G_Main_Converse}]
This proof is similar to its finite field counterpart, Theorem \ref{thm_F_Main_Converse}.
We expand
${_{nm}G_{nm}} [\{a_i+\frac{l}{n}\};\{b_i+\frac{l}{n}\} \mid \lambda \;]_q$, by Definition \ref{def_Gq},
and make the change of variable $ j \rightarrow j - \frac{q-1}{n}$ to get that
\begin{equation*}
{_{nm}G_{nm}} [\{a_i+\tfrac{l}{n}\};\{b_i+\tfrac{l}{n}\} \mid \lambda \;]_q
=
\omega^{\frac{q-1}{n}}(\lambda)
\cdot
{_{nm}G_{nm}} [\{a_i+\tfrac{l}{n}\};\{b_i+\tfrac{l}{n}\} \mid \lambda \;]_q.
\end{equation*}
Noting that $\omega^{\frac{q-1}{n}}(\lambda) \neq 1$, as $\lambda$ is not an $n$-th power, completes the proof.
\end{proof}

\subsection{Classical Setting}\label{subsec_Proofs_Classical}

\begin{proof}[Proof of Theorem \ref{thm_C_Main}]
By definition, we see that
$\ph{a}{nk} = n^{nk} \prod_{l=0}^{n-1} \ph{\tfrac{a}{n} + \tfrac{l}{n}}{k}.$
Therefore,
\begin{align*}
\sum_{l=0}^{n-1}
{_mF_{m}} 
\biggl[ \begin{array}{cccc} 
n a_1, & n a_2, & \dotsc, & n a_m \vspace{.05in}\\
n b_1 & n b_2, & \dotsc, & n b_m \end{array}
\Big| \; \xi_n^l \cdot z \biggr]
&=
\sum_{l=0}^{n-1}
\sum^{\infty}_{k=0}
\prod_{i=1}^{m} 
\frac{\ph{na_i}{k}}{\ph{nb_i}{k}}
\; {(\xi_n^l z)^k}\\
&=
\sum^{\infty}_{k=0}
\prod_{i=1}^{m} 
\frac{\ph{na_i}{k}}{\ph{nb_i}{k}}
\; {z^k} \;
\sum_{l=0}^{n-1}
{\xi_n^{lk}}\\
&=
n
\sum^{\infty}_{\substack{k=0\\ k \equiv 0 \imod n}}
\prod_{i=1}^{m} 
\frac{\ph{na_i}{k}}{\ph{nb_i}{k}}
\; {z^k} \qquad (\textup{by Cor. } \ref{cor_Orth})\\
&=
n
\sum^{\infty}_{k=0}
\prod_{i=1}^{m} 
\frac{\ph{na_i}{nk}}{\ph{nb_i}{nk}}
\; {z^{nk}}\\
&=
n
\sum^{\infty}_{k=0}
\prod_{i=1}^{m} 
\prod_{l=0}^{n-1} 
\frac{\ph{a_i + \frac{l}{n}}{k}}{\ph{b_i + \frac{l}{n}}{k}}
\; {z^{nk}},
\end{align*}
as required.
\end{proof}


\section{Applications}\label{sec_Apps}
In this section we give some applications of our main results. 
The purpose is mainly to show the different types of applications, giving one or two illustrative examples in each case.
So, the list of such results shown here is by no means exhaustive. 

\subsection{Finite Field Setting}\label{subsec_Apps_FF}
Using Corollary \ref{cor_F_n=2} we can leverage known reduction formulas at lower orders to get new reduction formulas.  
We give examples when $m=2,3,4$.

\begin{cor}\label{cor_FF_n=2_m=2}
For $q$ odd and $A^2, B^4 \neq \varepsilon$,
\begin{equation*}
{_{4}F_{4}} {\biggl( \begin{array}{cccc} 
A, & \varphi A, & B, & \varphi B\\[1pt]
\varepsilon, & \varphi, & A\bar{B}, & \varphi A\bar{B}
 \end{array}
\Big| \; 1 \biggr)}_{q}
=
\frac{g(B^2)g(\bar{A^2} B^4)}{g(\bar{A^2} B^2)g(B^4)}
+
\sum_{R^2=A^2} \frac{g(\bar{A^2})g(\bar{R} B^2)}{g(\bar{R})g(\bar{A^2} B^2)}.
\end{equation*}
\end{cor}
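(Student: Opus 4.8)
The plan is to apply Corollary~\ref{cor_F_n=2} with $n=2$, $m=2$, and a judicious choice of parameters so that the left-hand side $_{4}F_{4}$ collapses to a sum of two $_{2}F_{2}$'s evaluated at $\pm\lambda$, each of which is then summable by a known classical-analogue reduction formula for finite field $_{2}F_{2}$'s. Concretely, I would take $A_1 = A$, $A_2 = B$, $B_1 = \varepsilon$, $B_2 = A\bar B$, so that Corollary~\ref{cor_F_n=2} reads
\begin{equation*}
{_{2}F_{2}} {\biggl( \begin{array}{cc} A^2, & B^2 \\[1pt] \varepsilon, & A^2\bar B^2 \end{array} \Big| \; \lambda \biggr)}_{q}
+
{_{2}F_{2}} {\biggl( \begin{array}{cc} A^2, & B^2 \\[1pt] \varepsilon, & A^2\bar B^2 \end{array} \Big| \; -\lambda \biggr)}_{q}
=
{_{4}F_{4}} {\biggl( \begin{array}{cccc} A, & \varphi A, & B, & \varphi B \\[1pt] \varepsilon, & \varphi, & A\bar B, & \varphi A\bar B \end{array} \Big| \; \lambda^2 \biggr)}_{q},
\end{equation*}
and then specialize $\lambda$ so that $\lambda^2 = 1$, i.e. $\lambda = \pm 1$ (here $\lambda=1$ and $-\lambda=-1$ are interchanged by the symmetry of the left side, so the choice is harmless). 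This reduces the claim to evaluating
$ {_{2}F_{2}}(A^2, B^2; \varepsilon, A^2\bar B^2 \mid 1)_q $ and $ {_{2}F_{2}}(A^2, B^2; \varepsilon, A^2\bar B^2 \mid -1)_q $.

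Next I would invoke the finite field analogue of the classical Gauss second summation theorem (or the Kummer-type evaluation of a $_{2}F_{1}$ at $-1$, repackaged in the normalized $_{2}F_{2}$ notation with a trivial bottom parameter $\varepsilon$); such evaluations are standard in Greene's catalogue and its normalized reformulations. The $_{2}F_{2}(\cdots\mid 1)_q$ term, having numerator parameters $A^2, B^2$ and denominator parameter $A^2\bar B^2$ (with the forced $\varepsilon$ on the bottom), should evaluate via the Gauss sum identity to something of the shape $\dfrac{g(B^2)\,g(\bar{A^2}B^4)}{g(\bar{A^2}B^2)\,g(B^4)}$ after simplification using \eqref{for_GaussConj}; here the hypotheses $A^2\neq\varepsilon$ and $B^4\neq\varepsilon$ are exactly the nonvanishing conditions needed to keep the relevant Gauss sums away from the degenerate value $-1$ and to avoid division by a trivial-character Gauss sum. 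The $_{2}F_{2}(\cdots\mid -1)_q$ term should evaluate, via the Kummer-type theorem, to a sum over the square roots $R$ of $A^2$, producing the term $\sum_{R^2=A^2}\dfrac{g(\bar{A^2})\,g(\bar R B^2)}{g(\bar R)\,g(\bar{A^2}B^2)}$; the appearance of a sum over $R$ with $R^2 = A^2$ is the usual finite field footprint of the classical factor $\Gamma(1+a/2)$, which the Hasse--Davenport relation (Theorem~\ref{thm_HD}) turns into a two-term character sum.

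The main obstacle I anticipate is bookkeeping rather than conceptual: pinning down exactly which known finite field summation formula to cite and matching its Gauss-sum output, written in Greene's unnormalized conventions, to the normalized $_{2}F_{2}$ of Definition~\ref{def_F} — in particular tracking the $\chi(-1)$ factors, the $g(\varepsilon)=-1$ normalizations on the empty bottom slot, and the $\prod g(A_i)^{-1}g(\bar B_i)^{-1}$ prefactors so that everything telescopes cleanly into the stated right-hand side. A secondary subtlety is verifying that the hypotheses $A^2\neq\varepsilon$, $B^4\neq\varepsilon$ suffice for both summation theorems to apply simultaneously and for all denominators $g(\bar{A^2}B^2)$, $g(B^4)$, $g(\bar R)$ to be nonzero (noting $g(\chi)\neq 0$ always, so the only real danger is an unwanted $g(\varepsilon)=-1$ causing a sign mismatch, or a degenerate character forcing a separate case). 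Once the dictionary between the two normalizations is fixed, combining the two evaluated $_{2}F_{2}$ terms gives the right-hand side of the corollary verbatim.
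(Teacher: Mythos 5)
Your proposal matches the paper's proof essentially verbatim: apply Corollary \ref{cor_F_n=2} with $m=2$, $\lambda=1$, $A_1=A$, $A_2=B$, $B_1=\varepsilon$, $B_2=A\bar{B}$ to split the $_{4}F_{4}$ at $1$ into ${_{2}F_{2}}(\cdots\mid 1)_q+{_{2}F_{2}}(\cdots\mid -1)_q$, then evaluate each by a known summation. The only difference is the citation: the paper invokes Theorems~1.9 and 1.10 of \cite{McC6} (the normalized finite field Gauss- and Kummer-type evaluations you anticipated), which already handle the normalization bookkeeping you flag as the main obstacle.
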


\begin{proof}
Applying Corollary \ref{cor_F_n=2} with $m=2$, $\lambda=1$, $A_1=A$, $A_2=B$, $B_1=\varepsilon$ and $B_2=A\bar{B}$ we get that
\begin{multline*}
{_{4}F_{4}} {\biggl( \begin{array}{cccc} 
A, & \varphi A, & B, & \varphi B\\[1pt]
\varepsilon, & \varphi, & A\bar{B}, & \varphi A\bar{B}
 \end{array}
\Big| \; 1 \biggr)}_{q}\\
=
{_{2}F_{2}} {\biggl( \begin{array}{cc} 
A^2, & B^2\\[1pt]
\varepsilon, & A^2\bar{B^2} \end{array}
\Big| \; 1 \biggr)}_{q}
+
{_{2}F_{2}} {\biggl( \begin{array}{cc} 
A^2, & B^2\\[1pt]
\varepsilon, & A^2\bar{B^2} \end{array}
\Big| -1 \biggr)}_{q}.
\end{multline*}
Each ${_{2}F_{2}}$ can be reduced to an expression in terms of Gauss sums by Theorems 1.9 and 1.10 in \cite{McC6}, respectively, giving the desired result.
\end{proof}

\begin{cor}\label{cor_FF_n=2_m=3}
For $q \equiv 1 \imod 4$ and $A^8 \neq \varepsilon$,
\begin{multline*}
{_{6}F_{6}} {\biggl( \begin{array}{cccccc} 
\chi_4, & \bar{\chi_4}, & \chi_4 A, & \bar{\chi_4} A, & \chi_4 \bar{A}, & \bar{\chi_4 A}\\[1pt]
\varepsilon, & \varphi, & \bar{A}, & \varphi \bar{A}, & A, & \varphi A \end{array}
\Big| \; 1 \biggr)}_{q}\\
=
1
+\sum_{R^2=\varphi} \frac{ g(\bar{R}\varphi A^2) \, g(\bar{R}\varphi \bar{A^2}) }{g(\bar{R})^2} 
+
\begin{cases}
\displaystyle\sum_{S^2=\chi_4} \frac{ g(SA) \, g(\chi_4 SA)}{g(\varphi SA)  \, g(\bar{\chi_4} SA)} & \textup{if } q \equiv 1 \, (8),\\[3pt]
0 & \textup{otherwise}.
\end{cases}
\end{multline*}
\end{cor}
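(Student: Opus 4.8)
The plan is to apply Corollary \ref{cor_F_n=2} with $n=2$ and $m=3$, choosing the parameters so that the left-hand side ${_6F_6}$ matches the one in the statement, and then to reduce each of the two resulting ${_3F_3}$'s of argument $\pm 1$ to a closed form. To set up the parameters, I would take $A_1 = \chi_4$, $A_2 = \chi_4 A$, $A_3 = \chi_4 \bar A$, $B_1 = \varepsilon$, $B_2 = \bar A$, $B_3 = A$, together with $\lambda = 1$. Then $\varphi A_1 = \varphi \chi_4 = \bar\chi_4$ (since $\chi_4^2 = \varphi$ forces $\varphi \chi_4 = \chi_4^3 = \bar\chi_4$), $\varphi A_2 = \bar\chi_4 A$, $\varphi A_3 = \bar\chi_4 \bar A = \bar{\chi_4 A}$, and similarly $\varphi B_2 = \varphi \bar A$, $\varphi B_3 = \varphi A$; one checks the numerator and denominator character lists agree (up to reordering, which ${_mF_m}$ is insensitive to) with those in the corollary statement. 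Meanwhile $A_i^2$ and $B_i^2$ give numerator parameters $\varphi, \varphi A^2, \varphi \bar A^2$ and denominator parameters $\varepsilon, \bar A^2, A^2$. Thus Corollary \ref{cor_F_n=2} yields
\[
{_6F_6}\Bigl(\cdots \Big|\; 1\Bigr)_q
= {_3F_3}\biggl(\begin{array}{ccc}\varphi, & \varphi A^2, & \varphi \bar A^2\\ \varepsilon, & \bar A^2, & A^2\end{array}\Big|\; 1\biggr)_q
+ {_3F_3}\biggl(\begin{array}{ccc}\varphi, & \varphi A^2, & \varphi \bar A^2\\ \varepsilon, & \bar A^2, & A^2\end{array}\Big|\; -1\biggr)_q.
\]

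Next I would evaluate the two ${_3F_3}$'s. The one with argument $1$ should be a well-poised/Saalsch\"utz-type ${_3F_3}$ at unit argument: note the parameter pattern $(\varphi, \varphi A^2, \varphi\bar A^2; \varepsilon, \bar A^2, A^2)$ is, after the substitution $\varphi A^2 = \varphi \cdot A^2$ and $A^2 = \varepsilon \cdot A^2$ etc., of the shape handled by Greene's (or McCarthy's) finite-field analogue of a Dixon/Watson/Whipple summation; I expect it to collapse to $1$ plus a single Gauss-sum term, accounting for the "$1 + \sum_{R^2=\varphi}\frac{g(\bar R\varphi A^2)g(\bar R\varphi\bar A^2)}{g(\bar R)^2}$" piece of the claimed formula (the sum over $R^2=\varphi$ being exactly the square-root ambiguity that appears when one applies a quadratic transformation or the Hasse--Davenport relation to split $g(\varphi)$). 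The ${_3F_3}$ with argument $-1$ I expect to vanish unless $-1$ is a fourth power, i.e.\ unless $q\equiv 1\imod 8$, in which case it contributes $\sum_{S^2=\chi_4}\frac{g(SA)g(\chi_4 SA)}{g(\varphi SA)g(\bar\chi_4 SA)}$; the dichotomy matches Corollary \ref{cor_F_n=2_Converse}-type vanishing applied at a deeper level, or alternatively follows because evaluating at $-1$ reintroduces a character value $\chi_4(-1)$ which is $1$ iff $q\equiv 1\imod 8$. Concretely I would cite the relevant ${_3F_3}$ reduction/summation lemmas — these are the $m=3$ analogues of Theorems 1.9 and 1.10 of \cite{McC6} used in Corollary \ref{cor_FF_n=2_m=2}, or the summation formulas in Greene \cite{G} / the hypergeometric-motive references \cite{BCM, RoRV} — and substitute $\lambda = \pm1$.

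\textbf{Main obstacle.} The routine part is the bookkeeping with $\chi_4$, $\varphi=\chi_4^2$, and the $\varphi B_i$ shifts; the genuine work is locating and correctly invoking the right ${_3F_3}$ evaluation at argument $1$ and argument $-1$. The finite-field ${_3F_3}$ summations are more delicate than the ${_2F_2}$ ones: one must track the exact Gauss-sum factors, the appearance of the sums $\sum_{R^2=\varphi}$ and $\sum_{S^2=\chi_4}$ (which come from inverting Hasse--Davenport, Theorem \ref{thm_HD}, or from quadratic/quartic transformation formulas), and the precise congruence condition $q\equiv 1\imod 8$ governing when the $-1$ term survives. Verifying that the specific combination of these terms telescopes to the stated right-hand side — in particular that no extra $q$-power normalization or sign is left over — is where the care is needed; I would check it against small primes ($q=13, 17, 29$) numerically to be certain of the case split before writing the final identities.
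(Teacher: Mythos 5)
Your first step coincides exactly with the paper's: you invoke Corollary \ref{cor_F_n=2} with $m=3$, $\lambda=1$ and the same parameter choice $A_1=\chi_4$, $A_2=\chi_4 A$, $A_3=\chi_4\bar A$, $B_1=\varepsilon$, $B_2=\bar A$, $B_3=A$, and the resulting split of the ${_6F_6}$ into ${_3F_3}\bigl(\varphi,\varphi A^2,\varphi\bar{A^2};\,\varepsilon,\bar{A^2},A^2\mid\pm1\bigr)_q$ is correct. The gap is in the second half: you never establish the two evaluations, and the shapes you predict for them are individually wrong. In the paper, the argument-$(+1)$ series is evaluated by the well-poised (Dixon-type) summation \cite[Thm.~1.11]{McC6} and contributes \emph{only} the term $\sum_{R^2=\varphi} g(\bar R\varphi A^2)\,g(\bar R\varphi\bar{A^2})/g(\bar R)^2$, not ``$1$ plus a single Gauss-sum term''. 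The constant $1$ and the conditional piece both come from the argument-$(-1)$ series, which the paper evaluates by a formula extracted from Greene \cite[(6.38)]{G}: for $C^4\neq\varepsilon$ one has ${_3F_3}(\varphi,\varphi C,\varphi\bar C;\,\varepsilon,\bar C,C\mid -1)_q = C(-1)\bigl[\,1+\Sigma\,\bigr]$, where $\Sigma$ is the sum over $D^2=\chi_4 C$ if $\chi_4C$ is a square and $0$ otherwise. With $C=A^2$ one has $C(-1)=1$, so your claim that the ${_3F_3}$ at $-1$ ``vanishes unless $q\equiv1\imod 8$'' is false --- it equals $1$ in that case. The congruence enters neither through a converse-type vanishing (Corollary \ref{cor_F_n=2_Converse} does not apply to these parameters, since $\lambda=-1$ need not be a non-square and the parameter array is not of that form) nor through $\chi_4(-1)$, but simply because $\chi_4A^2$ is a square exactly when $\chi_4$ is, i.e.\ when $8\mid q-1$; writing $D=SA$ then converts the sum over $D^2=\chi_4A^2$ into the stated sum over $S^2=\chi_4$.

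Your two guessed contributions happen to add to the correct right-hand side only because you transferred the constant $1$ from one evaluation to the other; since the whole content of the corollary lies in these two closed-form evaluations, leaving them as hedged expectations (with the $-1$ case based on an incorrect vanishing mechanism) is a genuine gap rather than a routine omission. To complete the proof along your (and the paper's) line you need precisely \cite[Thm.~1.11]{McC6} for the $+1$ case and the ${_3F_3}(-1)$ evaluation derived from \cite[(6.38)]{G} for the $-1$ case; the ${_2F_2}$ evaluations of \cite[Thms.~1.9, 1.10]{McC6} that you cite by analogy do not themselves supply these.
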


\begin{proof}
From \cite[(6.38)]{G} we can derive that, for $C^4 \neq \varepsilon$,
\begin{multline}\label{cor_FF_n=2_m=3_for1}
{_{3}F_{3}} {\biggl( \begin{array}{ccc} 
\varphi, & \varphi C, & \varphi \bar{C}\\[1pt]
\varepsilon, & \bar{C}, & C
 \end{array}
\Big| -1 \biggr)}_{q}\\
=
C(-1) \times 
\left[ 
1 + 
\begin{cases}
0 & \textup{if $\chi_4 C$ is not a square},\\
\displaystyle\sum_{D^2=\chi_4 C} \frac{ g(D) \, g(\chi_4 D)}{g(\varphi D)  \, g(\bar{\chi_4} D)} & \textup{otherwise}.
\end{cases}
\right].
\end{multline}
We apply Corollary \ref{cor_F_n=2} with $m=3$, $\lambda=1$, $A_1=\chi_4$, $A_2=\chi_4 A$, $A_3=\chi_4 \bar{A}$, $B_1=\varepsilon$, $B_2=\bar{A}$ and $B_3=A$ to split the ${_{6}F_{6}}$ in the statement of the corollary into ${_{3}F_{3}}(\cdots\mid+1)+{_{3}F_{3}}(\cdots\mid-1)$. The ${_{3}F_{3}}(\cdots\mid+1)$ can be reduced by \cite[Thm.~1.11]{McC6} and the ${_{3}F_{3}}(\cdots\mid-1)$ by (\ref{cor_FF_n=2_m=3_for1}), giving the desired result.
\end{proof}
The case corresponding to Corollary \ref{cor_FF_n=2_m=3} with $A=\varepsilon$ is covered by Corollary \ref{cor_phi3}.

\begin{cor}\label{cor_FF_n=2_m=4}
For $q \equiv 1 \imod 4$, $A^4  \neq \varepsilon$, $B^8 \neq \varepsilon$ and $A^2 \neq \varphi B^4$,
\begin{multline*}
{_{8}F_{8}} {\biggl( \begin{array}{cccccccc} 
A, & \varphi A, &  \chi_4 A, & \bar{\chi_4} A, & B, & \varphi B, &  \chi_4 B, & \bar{\chi_4} B\\[1pt]
\varepsilon, & \varphi, & \chi_4, & \bar{\chi_4}, & A\bar{B}, & \varphi A\bar{B}, & \chi_4 A\bar{B}, &  \bar{\chi_4} A\bar{B} \end{array}
\Big| \; 1 \biggr)}_{q}\\
=
\frac{g(B^4)g(\bar{A^4} B^8)}{g(\bar{A^4} B^4)g(B^8)}
+
\sum_{R^2=A^4} \frac{g(\bar{A^4})g(\bar{R} B^4)}{g(\bar{R})g(\bar{A^4} B^4)}\\
+
\dfrac{g(\bar{A^2}) \, g(\bar{A^2}\varphi B^4) }{g(\bar{A^2}B^2) \, g( \bar{A^2}\varphi B^2)}
\displaystyle\sum_{R^2=A^2} 
{_{3}F_2} \biggl( \begin{array}{ccc} R \varphi \bar{A^2}, & B^2, & \varphi B^2 \vspace{.02in}\\
\phantom{R \varphi \bar{A^2}} & R, & \varphi \end{array}
\Big| \; 1 \biggr)_{q}.
\end{multline*}
\end{cor}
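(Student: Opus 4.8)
The strategy follows the proof of Corollary~\ref{cor_FF_n=2_m=3}: split the $_8F_8$ over square roots of unity and then reduce the two resulting pieces separately. \textbf{Step 1 (splitting).} Apply Corollary~\ref{cor_F_n=2} with $m=4$ and $A_1=A$, $A_2=\chi_4 A$, $A_3=B$, $A_4=\chi_4 B$, $B_1=\varepsilon$, $B_2=\chi_4$, $B_3=A\bar B$, $B_4=\chi_4 A\bar B$. Since $\varphi=\chi_4^2$, the eight parameters $A_i,\varphi A_i$ (resp.\ $B_i,\varphi B_i$) are exactly the upper (resp.\ lower) parameter list of the $_8F_8$ in the statement, while $A_i^2$ and $B_i^2$ are $A^2,\varphi A^2,B^2,\varphi B^2$ and $\varepsilon,\varphi,A^2\bar B^2,\varphi A^2\bar B^2$. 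Taking $\lambda=1$ in that corollary gives
\[
{_{8}F_{8}}\bigl(\cdots\,\big|\,1\bigr)_q
={_{4}F_{4}}\biggl(\begin{array}{cccc}A^2,&\varphi A^2,&B^2,&\varphi B^2\\[1pt]\varepsilon,&\varphi,&A^2\bar B^2,&\varphi A^2\bar B^2\end{array}\,\Big|\,1\biggr)_q
+{_{4}F_{4}}\biggl(\begin{array}{cccc}A^2,&\varphi A^2,&B^2,&\varphi B^2\\[1pt]\varepsilon,&\varphi,&A^2\bar B^2,&\varphi A^2\bar B^2\end{array}\,\Big|\,-1\biggr)_q .
\]

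\textbf{Step 2 (the $+1$ piece).} The first $_4F_4$ above is precisely the left-hand side of Corollary~\ref{cor_FF_n=2_m=2} with $(A,B)$ replaced by $(A^2,B^2)$ — permissible since $A^4\neq\varepsilon$ and $(B^2)^4=B^8\neq\varepsilon$ — so it equals $\frac{g(B^4)g(\bar{A^4} B^8)}{g(\bar{A^4} B^4)g(B^8)}+\sum_{R^2=A^4}\frac{g(\bar{A^4})g(\bar R B^4)}{g(\bar R)g(\bar{A^4} B^4)}$, the first two terms of the asserted identity. (Concretely, this amounts to reducing ${_2F_2}(A^4,B^4;\varepsilon,A^4\bar B^4\,|\,1)_q$ by \cite[Thm.~1.9]{McC6} and ${_2F_2}(A^4,B^4;\varepsilon,A^4\bar B^4\,|\,-1)_q$ by \cite[Thm.~1.10]{McC6}.)

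\textbf{Step 3 (the $-1$ piece).} It remains to evaluate the second $_4F_4$: to show
\[
{_{4}F_{4}}\biggl(\begin{array}{cccc}A^2,&\varphi A^2,&B^2,&\varphi B^2\\[1pt]\varepsilon,&\varphi,&A^2\bar B^2,&\varphi A^2\bar B^2\end{array}\,\Big|\,-1\biggr)_q
=\frac{g(\bar{A^2})\,g(\bar{A^2}\varphi B^4)}{g(\bar{A^2}B^2)\,g(\bar{A^2}\varphi B^2)}
\sum_{R^2=A^2}{_{3}F_2}\biggl(\begin{array}{ccc}R\varphi\bar{A^2},&B^2,&\varphi B^2\\[1pt]\phantom{R\varphi\bar{A^2}}&R,&\varphi\end{array}\,\Big|\,1\biggr)_q .
\]
This is the order-$4$ counterpart of the $_3F_3(\cdots\,|\,-1)$ reduction \cite[(6.38)]{G} used in Corollary~\ref{cor_FF_n=2_m=3}, except that here the left side is not reduced all the way to Gauss sums. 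I would prove it directly: expand the left side by Definition~\ref{def_F} and apply the Hasse--Davenport product formula (Theorem~\ref{thm_HD}) with $n=2$ to each of the four factor-pairs $g(\,\cdot\,)\,g(\varphi\,\cdot\,)$ occurring in it (numerators and normalizing denominators alike). This rewrites the summand as a function of $\chi^2$ times a residual factor $\chi(-1)$; since $q\equiv 1\imod 4$, $-1$ is a square in $\mathbb{F}_q$ and so $\chi(-1)$ is itself a function of $\chi^2$, and the substitution $\rho=\chi^2$ collapses the character sum onto $n$-th powers, at which point Proposition~\ref{prop_Orth} with $n=2$ produces the sum over the square roots $R$ of $A^2$. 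Reassembling the $\rho$-sum yields the stated $_3F_2$ (read as a $_3F_3$ whose first lower parameter is $\varepsilon$), and the Gauss sums surviving the Hasse--Davenport steps combine, via \eqref{for_GaussConj}, into the displayed prefactor; the hypothesis $A^2\neq\varphi B^4$ keeps the characters there non-trivial so that no such Gauss sum degenerates. Equivalently, by Corollary~\ref{cor_F_n=2} at $m=2$ the left side of this display equals ${_2F_2}(A^4,B^4;\varepsilon,A^4\bar B^4\,|\,\zeta_4)_q+{_2F_2}(A^4,B^4;\varepsilon,A^4\bar B^4\,|\,-\zeta_4)_q$, so the same identity may instead be established at the $_2F_2$ level. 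Summing the outputs of Steps 2 and 3 then gives the corollary.

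The main obstacle is the Hasse--Davenport bookkeeping in Step 3: one must follow the auxiliary Gauss-sum factors through the substitution $\rho=\chi^2$ carefully enough both to see the sum over the square roots of $A^2$ emerge and to identify the leftover factors as exactly $g(\bar{A^2})\,g(\bar{A^2}\varphi B^4)\big/\!\bigl(g(\bar{A^2}B^2)\,g(\bar{A^2}\varphi B^2)\bigr)$ rather than one of its many equivalent Gauss-sum forms; Steps 1 and 2 are routine applications of results already in hand.
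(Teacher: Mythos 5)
Your Steps 1 and 2 coincide exactly with the paper's proof: the same application of Corollary \ref{cor_F_n=2} with $m=4$, $\lambda=1$ and the same parameter choices, and the same reduction of the $+1$ piece via Corollary \ref{cor_FF_n=2_m=2} with $(A,B)$ replaced by $(A^2,B^2)$. The gap is in Step 3. The paper evaluates the ${_{4}F_{4}}(\cdots\mid-1)$ by citing \cite[Thm.~1.5]{McC6}, a finite field analogue of Whipple's transformation of a well-poised ${_4F_3}(-1)$ into ${_3F_2}(1)$'s (its classical counterpart, Whipple's eqn.~(3.4), is exactly what is used for the corresponding piece of Corollary \ref{cor_C_n=2_m=4}), just as the $-1$ piece in Corollary \ref{cor_FF_n=2_m=3} was handled by quoting \cite[(6.38)]{G} rather than by a from-scratch computation. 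Your proposed direct derivation does not reach this result: applying Theorem \ref{thm_HD} to the pairs $g(\cdot)\,g(\varphi\,\cdot)$, observing that $\chi(-1)$ is a function of $\chi^2$ since $q\equiv 1 \imod 4$, and then invoking Proposition \ref{prop_Orth} only reproduces the splitting ${_{4}F_{4}}(\cdots\mid-1)={_2F_2}(A^4,B^4;\varepsilon,A^4\bar{B}^4\mid\zeta_4)_q+{_2F_2}(A^4,B^4;\varepsilon,A^4\bar{B}^4\mid-\zeta_4)_q$, i.e., Corollary \ref{cor_F_n=2} run in reverse, which you acknowledge yourself. Nothing in those manipulations can produce the argument-$1$ ${_3F_2}$'s, the sum over the two square roots $R$ of $A^2$, or the prefactor $g(\bar{A^2})g(\bar{A^2}\varphi B^4)\big/\bigl(g(\bar{A^2}B^2)g(\bar{A^2}\varphi B^2)\bigr)$: Proposition \ref{prop_Orth} only detects whether a character is a square and never generates a sum over square roots of a fixed character, while the Gauss sums $g(\bar{A^2})$, $g(\bar{A^2}\varphi B^4)$ and the $R$-dependent normalizations $g(R\varphi\bar{A^2})$, $g(\bar R)$ of the ${_3F_2}$'s simply do not occur among the factors surviving the Hasse--Davenport bookkeeping.

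In other words, the substance of Step 3 is precisely a quadratic, Whipple-type transformation of a well-poised series at $-1$, and the known proofs of such finite field transformations (in \cite{G} and \cite{McC6}) require genuinely more machinery --- double character-sum interchanges and ${_2F_1}$-type summation theorems --- not just Hasse--Davenport plus orthogonality; your sketch carries out the easy reduction and leaves the step that contains the difficulty unproved. To repair the argument, either quote \cite[Thm.~1.5]{McC6} for the ${_{4}F_{4}}(\cdots\mid-1)$ piece, verifying its hypotheses (this is where $A^4\neq\varepsilon$, $B^8\neq\varepsilon$ and $A^2\neq\varphi B^4$ are needed, not merely to keep a Gauss sum nondegenerate in your bookkeeping), or supply a full proof of that transformation, which is a substantial independent task.
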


\begin{proof}
We apply Corollary \ref{cor_F_n=2} with $m=4$, $\lambda=1$, $A_1=A$, $A_2=\chi_4 A$, $A_3=B$, $A_4=\chi_4 B$, $B_1=\varepsilon$, $B_2=\chi_4$, $B_3=A \bar{B}$ and $B_4=\chi_4 A \bar{B}$ to split the ${_{8}F_{8}}$ in the statement of the corollary into ${_{4}F_{4}}(\cdots\mid+1)+{_{4}F_{4}}(\cdots\mid-1)$. The ${_{4}F_{4}}(\cdots\mid+1)$ can be reduced by Corollary \ref{cor_FF_n=2_m=2} and the ${_{4}F_{4}}(\cdots\mid-1)$ by \cite[Thm.~1.5]{McC6}, giving the desired result.
\end{proof}


\subsection{$p$-adic Setting}\label{subsec_Apps_padic}

We begin by looking at Corollary \ref{cor_G_n=2} in the case that $a_i = \frac{1}{4}$, $b_i=1$, i.e.,
\begin{multline}\label{cor_G_n=2_phi}
{_{m}G_{m}}
\biggl[ \begin{array}{cccc}
\frac{1}{2} & \frac{1}{2} & \dotsc & \frac{1}{2} \\[2pt]
1 & 1 & \dotsc & 1 \end{array}
\Big| \; \lambda \; \biggr]_q
+
{_{m}G_{m}}
\biggl[ \begin{array}{cccc} 
\frac{1}{2} & \frac{1}{2} & \dotsc & \frac{1}{2} \\[2pt]
1 & 1 & \dotsc & 1 \end{array}
\Big| \; -\lambda \; \biggr]_q\\
=
{_{2m}G_{2m}}
\biggl[ \begin{array}{ccccccc} 
\frac{1}{4} & \frac{3}{4} & \frac{1}{4} & \frac{3}{4} & \dotsm & \frac{1}{4} & \frac{3}{4} \\[2pt]
1 & \frac{1}{2} & 1 & \frac{1}{2}  & \dotsm & 1 & \frac{1}{2} 
 \end{array}
\Big| \; \lambda^2 \; \biggr]_q.
\end{multline}
We will consider (\ref{cor_G_n=2_phi}) for $m=2,3, 4$ and various values of $\lambda$.

\begin{cor}[$m=2, \lambda=1$]\label{cor_phi2}
Let $q=p^r$ be an odd prime power. When $q \equiv 1 \imod {4}$, we write $q=x^2 +y^2$ for integers $x$ and $y$, such that $x\equiv 1 \imod{4}$, and $p \nmid x$ when $p \equiv 1 \imod{4}$. Then
\begin{equation*}
{_{4}G_{4}}
\biggl[ \begin{array}{cccc} 
\frac{1}{4} & \frac{3}{4} & \frac{1}{4} & \frac{3}{4} \\[2pt]
1 & \frac{1}{2} & 1 & \frac{1}{2}
\end{array}
\Big| \; 1 \; \biggr]_q
=
\begin{cases}
-1 & \textup{if } q\equiv 3 \imod 4,\\
1+2x & \textup{if } q\equiv 1 \imod 8,\\ 
1-2x & \textup{if } q\equiv 5 \imod 8.
\end{cases}
\end{equation*}
\end{cor}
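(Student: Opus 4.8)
The plan is to read the $p$-adic splitting identity \eqref{cor_G_n=2_phi} from right to left in the case $m=2$, $\lambda=1$ (this is Corollary~\ref{cor_G_n=2} applied with $a_i=\tfrac14$, $b_i=1$; the relevant parameters are \emph{defined over $\mathbb{Q}$}, so the corollary is available). This expresses the quantity in the statement as a sum of two second-order functions,
\[
{}_{4}G_{4}\Bigl[\begin{smallmatrix}\frac14 & \frac34 & \frac14 & \frac34 \\ 1 & \frac12 & 1 & \frac12\end{smallmatrix}\,\Big|\,1\,\Bigr]_q
=
{}_{2}G_{2}\Bigl[\begin{smallmatrix}\frac12 & \frac12 \\ 1 & 1\end{smallmatrix}\,\Big|\,1\,\Bigr]_q
+
{}_{2}G_{2}\Bigl[\begin{smallmatrix}\frac12 & \frac12 \\ 1 & 1\end{smallmatrix}\,\Big|\,{-1}\,\Bigr]_q .
\]
Since $\tfrac12(q-1)$ and $q-1$ are integers for odd $q$, Lemma~\ref{lem_G_to_F} converts each summand into a finite field function: the parameters $a_i=\tfrac12$, $b_i=1$ correspond to $A_i=\bar\omega^{(q-1)/2}=\varphi$ and $B_i=\bar\omega^{q-1}=\varepsilon$, and as $(\pm1)^{-1}=\pm1$ this gives ${}_{2}G_{2}[\tfrac12,\tfrac12;1,1\mid\pm1]_q={}_{2}F_{2}(\varphi,\varphi;\varepsilon,\varepsilon\mid\pm1)_q$. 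Note this reduction is uniform in the odd prime power $q$, whereas the original ${}_4G_4$ has no finite field analogue when $q\equiv3\pmod{4}$.

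The heart of the proof is the evaluation of ${}_{2}F_{2}(\varphi,\varphi;\varepsilon,\varepsilon\mid\lambda)_q$ for $\lambda=\pm1$, which I would do directly from Definition~\ref{def_F}. Using $g(\varepsilon)=-1$ and $\chi(-1)^2=1$, and the Jacobi sum identity $g(\varphi\chi)g(\bar\chi)=J(\varphi\chi,\bar\chi)\,g(\varphi)$ (valid for $\chi\neq\varepsilon,\varphi$, and, as one checks, also accounting for those two exceptional characters), the defining sum becomes $\tfrac{-1}{q-1}\sum_{\chi}J(\varphi\chi,\bar\chi)^2\chi(\lambda)$. The substitution $t=u/(1-u)$ in $J(\varphi\chi,\bar\chi)=\sum_u\varphi(u)\chi(u)\bar\chi(1-u)$ yields $J(\varphi\chi,\bar\chi)=\sum_t\varphi(t^2+t)\,\chi(t)$; squaring this, interchanging the order of summation, and applying orthogonality of multiplicative characters then collapses everything to
\[
{}_{2}F_{2}\Bigl(\begin{smallmatrix}\varphi & \varphi \\ \varepsilon & \varepsilon\end{smallmatrix}\,\Big|\,\lambda\,\Bigr)_q
=
-\sum_{t\in\mathbb{F}_q}\varphi\bigl(t(t+1)(\lambda t+1)\bigr)
\]
(alternatively, known reductions of ${}_2F_2(\cdots\mid\pm1)$, e.g.\ from \cite{McC6}, could be invoked). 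At $\lambda=1$ the right-hand side is $-\sum_{t\neq-1}\varphi(t)=\varphi(-1)$, and at $\lambda=-1$ it equals $-\varphi(-1)\sum_{t}\varphi(t^3-t)$. Adding, the target simplifies to $\varphi(-1)\bigl(1-\sum_{t\in\mathbb{F}_q}\varphi(t^3-t)\bigr)$.

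It remains to treat the three cases. If $q\equiv3\pmod{4}$ then $\varphi(-1)=-1$ and, replacing $t$ by $-t$, $\sum_t\varphi(t^3-t)=0$, so the value is $-1$. If $q\equiv1\pmod{4}$ then $\varphi(-1)=1$, and the conclusion follows from the classical complex-multiplication evaluation of the point count on $y^2=x^3-x$,
\[
\sum_{t\in\mathbb{F}_q}\varphi(t^3-t)=-\,\chi_4(-1)\cdot 2x,\qquad\text{where } q=x^2+y^2,\ x\equiv1\pmod{4},\ \text{and } p\nmid x \text{ if } p\equiv1\pmod{4};
\]
since $\chi_4(-1)=(-1)^{(q-1)/4}$ is $+1$ for $q\equiv1\pmod{8}$ and $-1$ for $q\equiv5\pmod{8}$, this produces the values $1+2x$ and $1-2x$ respectively. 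I expect the genuine difficulty to be concentrated in this last ingredient: relating $\sum_t\varphi(t^3-t)$ to the Jacobi sum $J(\chi_4,\chi_4)$, pinning down the normalization of $x$ (the congruence $x\equiv1\pmod{4}$ together with the side condition $p\nmid x$, which excludes the spurious decomposition $q=(p^{r/2})^2$ for even $r$), and tracking the sign $\chi_4(-1)$ correctly. For prime $q$ this is classical (see \cite{BEW,IR}); the prime power case follows via Hasse--Davenport, though the sign bookkeeping needs care.
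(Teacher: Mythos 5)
Your proposal is correct, and its skeleton is the same as the paper's: both start from \eqref{cor_G_n=2_phi} with $m=2$, $\lambda=1$, so that the ${}_4G_4$ value is the sum of ${}_2G_2[\tfrac12,\tfrac12;1,1\mid 1]_q$ and ${}_2G_2[\tfrac12,\tfrac12;1,1\mid -1]_q$ (and, as you note, Lemma \ref{lem_G_to_F} with $a_i=\tfrac12$, $b_i=1$ turns these into ${}_2F_2(\varphi,\varphi;\varepsilon,\varepsilon\mid\pm1)_q$, the arguments $\pm1$ being self-inverse). Where you diverge is in how the two second-order values are obtained: the paper simply quotes known evaluations --- \cite[Thm.~4.9]{G2} for the argument $+1$, and \cite[Thm~1.10]{McC6} combined with \cite[Lemma~3.5]{DM} for the argument $-1$ --- whereas you rederive them from Definition \ref{def_F} via the Jacobi-sum reduction ${}_2F_2(\varphi,\varphi;\varepsilon,\varepsilon\mid\lambda)_q=-\sum_t\varphi\bigl(t(t+1)(\lambda t+1)\bigr)$, giving $\varphi(-1)$ at $\lambda=1$ and $-\varphi(-1)\sum_t\varphi(t^3-t)$ at $\lambda=-1$, and then invoke the classical CM evaluation of $\sum_t\varphi(t^3-t)$ for $y^2=x^3-x$. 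Your character-sum computation is sound (the identity $g(A)g(B)=J(A,B)g(AB)$ indeed holds for all $\chi$ here since $AB=\varphi\neq\varepsilon$), and your stated normalization $\sum_t\varphi(t^3-t)=-\chi_4(-1)\cdot 2x$ with $x\equiv1\pmod 4$ and $p\nmid x$ reproduces exactly the values $\pm 2x$ that the paper imports from \cite{McC6} and \cite{DM}; the sign and prime-power bookkeeping you flag as the delicate point is precisely what \cite[Lemma~3.5]{DM} supplies in the paper's version. So your route buys a more self-contained argument at the cost of re-proving (or carefully citing, e.g.\ from \cite{BEW}) the CM evaluation, while the paper's proof is shorter by outsourcing both evaluations to the literature; there is no gap, provided that last classical ingredient is cited or proved with the stated normalization.
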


\begin{proof}
By \cite[Thm.~4.9]{G2},
\begin{equation}\label{cor_phi2_for1} 
{_{2}G_{2}}
\biggl[ \begin{array}{cccc}
\frac{1}{2} & \frac{1}{2} \\[2pt]
1 & 1 \end{array}
\Big| \; 1 \; \biggr]_q
=
\begin{cases}
-1 & \textup{if } q \equiv 3 \imod 4,\\
+1 & \textup{if } q \equiv 1 \imod 4.\\
\end{cases}
\end{equation}
Combining \cite[Thm~1.10]{McC6} and \cite[Lemma~3.5]{DM} we have
\begin{equation}\label{cor_phi2_for2} 
{_{2}G_{2}}
\biggl[ \begin{array}{cccc}
\frac{1}{2} & \frac{1}{2} \\[2pt]
1 & 1 \end{array}
\Big| -1 \; \biggr]_q
=
\begin{cases}
0 & \textup{if } q\equiv 3 \imod 4,\\
+2x & \textup{if } q\equiv 1 \imod 8,\\ 
-2x & \textup{if } q\equiv 5 \imod 8.
\end{cases}
\end{equation}
Substituting (\ref{cor_phi2_for1}) and (\ref{cor_phi2_for2}) into (\ref{cor_G_n=2_phi}), when $m=2$ and $\lambda=1$, yields the result.
\end{proof}

\begin{cor}[$m=3, \lambda=1$]\label{cor_phi3}
Let $q=p^r$ be an odd prime power. 
When $q \equiv 1 \imod {4}$, we write $q=x^2 +y^2$ for integers $x$ and $y$, such that $p \nmid x$ when $p \equiv 1 \imod{4}$. 
When $q \equiv 1,3 \imod {8}$, we write $q=u^2+2v^2$ for integers $u$ and $v$, such that $p \nmid u$ when $p \equiv 1,3 \imod{8}$.
Then
\begin{equation*}
{_{6}G_{6}}
\biggl[ \begin{array}{cccccc} 
\frac{1}{4} & \frac{3}{4} & \frac{1}{4} & \frac{3}{4} & \frac{1}{4} & \frac{3}{4} \\[2pt]
1 & \frac{1}{2} & 1 & \frac{1}{2} & 1 & \frac{1}{2}
\end{array}
\Big| \; 1 \; \biggr]_q
=
\begin{cases}
4(x^2+u^2)-3q & \textup{if } q\equiv 1 \imod 8,\\
q - 4u^2 & \textup{if } q\equiv 3 \imod 8,\\ 
4x^2-q & \textup{if } q\equiv 5 \imod 8,\\
-q & \textup{if } q\equiv 7 \imod 8.
\end{cases}
\end{equation*}
\end{cor}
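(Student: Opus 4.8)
The proof will run parallel to that of Corollary \ref{cor_phi2}, one order higher. First I would specialize the reduction (\ref{cor_G_n=2_phi}) to $m=3$ and $\lambda = 1$, which collapses the left side to the two terms at $\lambda = \pm 1$ and yields
\[
{_{6}G_{6}}
\biggl[ \begin{array}{cccccc}
\tfrac{1}{4} & \tfrac{3}{4} & \tfrac{1}{4} & \tfrac{3}{4} & \tfrac{1}{4} & \tfrac{3}{4} \\[2pt]
1 & \tfrac{1}{2} & 1 & \tfrac{1}{2} & 1 & \tfrac{1}{2} \end{array}
\Big| \; 1 \; \biggr]_q
=
{_{3}G_{3}}
\biggl[ \begin{array}{ccc} \tfrac{1}{2} & \tfrac{1}{2} & \tfrac{1}{2} \\[2pt]
1 & 1 & 1 \end{array}
\Big| \; 1 \; \biggr]_q
+
{_{3}G_{3}}
\biggl[ \begin{array}{ccc} \tfrac{1}{2} & \tfrac{1}{2} & \tfrac{1}{2} \\[2pt]
1 & 1 & 1 \end{array}
\Big| -1 \; \biggr]_q .
\]
By Lemma \ref{lem_G_to_F}, the two summands are the $p$-adic incarnations of ${_{3}F_{3}}(\varphi,\varphi,\varphi;\varepsilon,\varepsilon,\varepsilon\mid\pm1)_q$ when $4\mid q-1$, and in general they are exactly the $A=\varepsilon$ degenerations of the two third-order functions that appear in the proof of Corollary \ref{cor_FF_n=2_m=3}. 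So the task reduces to evaluating these two ${_{3}G_{3}}$'s in closed form, uniformly over odd prime powers $q$.

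For the $\lambda = +1$ term I would use the third-order reduction formula of \cite[Thm.~1.11]{McC6} (the same one invoked in the proof of Corollary \ref{cor_FF_n=2_m=3}), evaluated at the boundary parameter those statements exclude: when $4\mid q-1$ this writes ${_{3}G_{3}}[\tfrac{1}{2},\tfrac{1}{2},\tfrac{1}{2};1,1,1\mid1]_q$ as an elementary term plus a sum of Gauss-sum ratios in characters of order dividing $4$, which via the Gross--Koblitz formula and the classical evaluation of $J(\chi_4,\chi_4)$ in terms of $q = x^2+y^2$ becomes a $\mathbb{Z}$-valued expression depending only on $x^2$ and on $q \bmod 8$; when $q \equiv 3 \pmod 4$ there is no character of order $4$, that sum is empty, and only the elementary term survives. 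For the $\lambda = -1$ term I would start from the $C=\varepsilon$ limit of (\ref{cor_FF_n=2_m=3_for1}) (derived from \cite[(6.38)]{G}), whose surviving sum runs over square roots of $\chi_4$, i.e. over characters of order $8$; evaluating these octic Gauss/Jacobi sums in terms of $q = u^2 + 2v^2$ (available precisely when $q \equiv 1,3 \pmod 8$) gives a $\mathbb{Z}$-valued expression in $u^2$ for those residues and only the elementary term for $q \equiv 5,7 \pmod 8$. As in Corollary \ref{cor_phi2}, one should also check the evaluations hold over $\mathbb{F}_{p^r}$, not merely $\mathbb{F}_p$, using the Hasse--Davenport relation to reduce to prime fields.

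Adding the two evaluations and simplifying class by class modulo $8$ then produces the four stated cases: the $x^2$-contribution is present exactly when $q \equiv 1 \pmod 4$, the $u^2$-contribution (with a sign depending on $q \bmod 8$) exactly when $q \equiv 1,3 \pmod 8$, and the remaining multiples of $q$ collect to $-3q,\,q,\,-q,\,-q$ in the four classes. The main obstacle is the second step: pinning down the two ${_{3}G_{3}}[\tfrac{1}{2},\tfrac{1}{2},\tfrac{1}{2};1,1,1\mid\pm1]_q$ evaluations with the correct constants and signs. The general third-order reductions of \cite{McC6, G} exclude the very parameter ($A=\varepsilon$, equivalently $C=\varepsilon$) one needs, so the boundary case has to be treated directly from the definitions, or from boundary evaluations of the type in \cite[Thm.~4.9]{G2}; and the passage from quartic and octic Jacobi sums to $x$ and $u$ requires careful $2$-adic/mod-$8$ bookkeeping — it is precisely this that makes the final answer depend on $q \bmod 8$ while, in contrast to Corollary \ref{cor_phi2}, depending on $x$ only through $x^2$, so that no sign normalization of $x$ is required.
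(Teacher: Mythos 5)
Your skeleton is exactly the paper's: specialize (\ref{cor_G_n=2_phi}) to $m=3$, $\lambda=1$, reduce everything to the two values ${_{3}G_{3}}[\tfrac12,\tfrac12,\tfrac12;1,1,1\mid\pm1]_q$, and your final mod-$8$ bookkeeping (adding $4x^2-2q$ or $0$ to $\pm(4u^2-q)$ or $\pm q$) reproduces the four stated cases correctly. The difference is that the paper does not re-derive these two evaluations at all: it quotes them, citing \cite[Thm.~1.11]{McC6} together with \cite[Lemma~3.5]{DM} for $\lambda=+1$, and Evans--Greene \cite{EG} for $\lambda=-1$, so the corollary is a two-line substitution into (\ref{cor_G_n=2_phi}).

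The genuine gap in your proposal is that the two inputs are never actually established. For $\lambda=-1$ you propose the $C=\varepsilon$ degeneration of (\ref{cor_FF_n=2_m=3_for1}) together with octic Gauss/Jacobi sum evaluations; but (\ref{cor_FF_n=2_m=3_for1}) is proved only for $C^4\neq\varepsilon$, and for $q\equiv 3 \pmod 8$ (and $q\equiv 7\pmod 8$) there is no character of order $4$, let alone order $8$, in $\widehat{\mathbb{F}_q^*}$, so the ``sum over square roots of $\chi_4$'' mechanism cannot produce the $q-4u^2$ (resp.\ $-q$) values in those classes; the Hasse--Davenport lifting you mention in passing would have to be carried out in detail, whereas \cite{EG} already gives the evaluation for all odd $q$. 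Likewise, for $\lambda=+1$ you correctly note that the reductions used in Corollary \ref{cor_FF_n=2_m=3} exclude the boundary parameter you need, but then leave that case as ``treat directly from the definitions,'' which is precisely the content requiring proof; the paper resolves it simply by invoking \cite[Thm.~1.11]{McC6} with \cite[Lemma~3.5]{DM}, which do cover these parameters. Replacing your sketched re-derivations by these citations repairs the argument, and then your first and last steps complete the proof exactly as in the paper.
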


\begin{proof}
Combining \cite[Thm~1.11]{McC6} and \cite[Lemma~3.5]{DM} we have
\begin{equation}\label{cor_phi3_for1} 
{_{3}G_{3}}
\biggl[ \begin{array}{cccc}
\frac{1}{2} & \frac{1}{2} & \frac{1}{2} \\[2pt]
1 & 1 & 1 \end{array}
\Big| \; 1 \; \biggr]_q
=
\begin{cases}
0 & \textup{if } q \equiv 3 \imod 4,\\
4x^2-2q & \textup{if } q \equiv 1 \imod 4.\\
\end{cases}
\end{equation}
From \cite{EG} we have
\begin{equation}\label{cor_phi3_for2} 
{_{3}G_{3}}
\biggl[ \begin{array}{cccc}
\frac{1}{2} & \frac{1}{2} & \frac{1}{2} \\[2pt]
1 & 1 & 1 \end{array}
\Big| -1 \; \biggr]_q
=
\begin{cases}
4u^2-q & \textup{if } q\equiv 1 \imod 8,\\
-4u^2+q & \textup{if } q\equiv 3 \imod 8,\\ 
q & \textup{if } q\equiv 5 \imod 8,\\
-q & \textup{if } q\equiv 7 \imod 8.
\end{cases}
\end{equation}
Substituting (\ref{cor_phi3_for1}) and (\ref{cor_phi3_for2}) into (\ref{cor_G_n=2_phi}), when $m=3$ and $\lambda=1$, yields the result.
\end{proof}

When $q=p$ we can relate the hypergeometric functions in (\ref{cor_phi2_for1}), (\ref{cor_phi3_for1}) and (\ref{cor_phi3_for2}) to the $p$-th Fourier coefficients of certain modular forms.
All modular forms will be denoted $f_{\#}$, where the subscript $\#$ is the form's unique identifier from LMFDB \cite{LMFDB}, and will have Fourier expansion 
$f_{\#} = \sum_{n\geq0} a_n(f_{\#}) q^n$.

\begin{cor}\label{cor_phi2_mod}
Consider the modular form $f_{32.2.a.a} = \eta^2(4z) \eta^2(8z) = \sum_{n\geq1} a_n(f_{32.2.a.a}) q^n \in S_2(\Gamma_0(32))$. 
If $p$ is an odd prime, then 
\begin{equation*}
{_{4}G_{4}}
\biggl[ \begin{array}{cccc} 
\frac{1}{4} & \frac{3}{4} & \frac{1}{4} & \frac{3}{4} \\[2pt]
1 & \frac{1}{2} & 1 & \frac{1}{2}
\end{array}
\Big| \; 1 \; \biggr]_p
=
\varphi(-1) + a_p(f_{32.2.a.a}).
\end{equation*}
\end{cor}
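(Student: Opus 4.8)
The plan is to combine Corollary~\ref{cor_phi2} with the classical arithmetic of the relevant modular form, evaluated at prime arguments $q=p$. First I would recall from Corollary~\ref{cor_phi2} that, for an odd prime $p$,
\begin{equation*}
{_{4}G_{4}}
\biggl[ \begin{array}{cccc}
\tfrac{1}{4} & \tfrac{3}{4} & \tfrac{1}{4} & \tfrac{3}{4} \\[2pt]
1 & \tfrac{1}{2} & 1 & \tfrac{1}{2}
\end{array}
\Big| \; 1 \; \biggr]_p
=
\begin{cases}
-1 & \textup{if } p\equiv 3 \imod 4,\\
1+2x & \textup{if } p\equiv 1 \imod 8,\\
1-2x & \textup{if } p\equiv 5 \imod 8,
\end{cases}
\end{equation*}
where $p=x^2+y^2$ with $x\equiv 1\imod 4$ when $p\equiv 1\imod 4$. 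Since $\varphi(-1)=1$ when $p\equiv 1\imod 4$ and $\varphi(-1)=-1$ when $p\equiv 3\imod 4$, the claimed identity is equivalent to showing
\begin{equation*}
a_p(f_{32.2.a.a}) =
\begin{cases}
0 & \textup{if } p\equiv 3 \imod 4,\\
2x & \textup{if } p\equiv 1 \imod 8,\\
-2x & \textup{if } p\equiv 5 \imod 8.
\end{cases}
\end{equation*}

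Next I would identify $f_{32.2.a.a}$ arithmetically. It is the weight-$2$ newform of level $32$, which is the modular form attached by modularity to the elliptic curve $E: y^2 = x^3 + x$ (conductor $32$, LMFDB label \texttt{32.a}), so $a_p(f_{32.2.a.a}) = p+1 - \#E(\mathbb{F}_p)$. The curve $E: y^2=x^3+x$ has complex multiplication by $\mathbb{Z}[i]$, and its point counts are governed by the classical result that for $p\equiv 1\imod 4$, writing $p=a^2+b^2$ with $a$ odd and normalized appropriately, one has $a_p = 2a$ up to sign determined by a congruence condition, while $a_p=0$ for $p\equiv 3\imod 4$ (a supersingular prime for this curve). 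This is the statement, due essentially to Gauss, on the number of solutions of $y^2 = x^3+x$ over $\mathbb{F}_p$; see for instance the biquadratic reciprocity computations in \cite{IR}. I would cite this and match the normalization of $x$ (i.e., $x\equiv 1\imod 4$, and the sign split between $p\equiv 1$ and $5\imod 8$) against the normalization already fixed in Corollary~\ref{cor_phi2}.

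The main obstacle is purely bookkeeping: pinning down the precise sign and congruence normalization so that the $x$ appearing in Corollary~\ref{cor_phi2} coincides with the one in the classical CM formula for $a_p(E)$. The ${_2G_2}$ building block $\bigl[\tfrac12,\tfrac12;1,1\mid -1\bigr]_p$ used to prove Corollary~\ref{cor_phi2} was itself evaluated via \cite[Thm.~1.10]{McC6} and \cite[Lemma~3.5]{DM}, and the latter already encodes exactly this kind of trace-of-Frobenius normalization for a CM form; so in practice I would trace the normalization through that chain rather than rederiving the Gauss-sum evaluation from scratch. Once the normalizations are reconciled, the identity follows by substituting the three cases of Corollary~\ref{cor_phi2} together with the value of $\varphi(-1)$ in each residue class, and comparing termwise with the Hecke eigenvalue $a_p(f_{32.2.a.a})$.
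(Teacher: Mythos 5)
Your strategy is sound but genuinely different from the paper's, and it is worth noting what each buys. The paper also starts from the $m=2$, $\lambda=1$ case of (\ref{cor_G_n=2_phi}), but it never passes through the explicit case-by-case values of Corollary \ref{cor_phi2}: it evaluates the two ${_2G_2}$ pieces directly, using Greene's evaluation (\ref{cor_phi2_for1}) for argument $+1$ (which is precisely $\varphi(-1)$) and Ono's result \cite[Prop.~1 \& Thm.~2]{O}, which states verbatim that ${_2G_2}\bigl[\tfrac12,\tfrac12;1,1\mid -1\bigr]_p=a_p(f_{32.2.a.a})$, for argument $-1$; the corollary then drops out with no CM bookkeeping at all. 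Your route instead consumes Corollary \ref{cor_phi2} and must supply the classical CM evaluation $a_p(f_{32.2.a.a})=0,\,2x,\,-2x$ in the three residue classes; that is true and classical, so the approach works, but it re-derives by hand exactly the normalization matching that Ono's theorem packages for free.

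There is one concrete slip you need to fix, and it sits precisely in the ``bookkeeping'' you deferred: $f_{32.2.a.a}$ is not attached to $y^2=x^3+x$, which has conductor $64$; the curve in isogeny class $32.a$ is $y^2=x^3-x$. The distinction is not cosmetic, because the two curves are (quartic) twists with, e.g., $a_5(y^2=x^3+x)=+2$ while $a_5(f_{32.2.a.a})=-2$, so the wrong curve flips the sign throughout the class $p\equiv 5 \imod 8$ and your termwise comparison with Corollary \ref{cor_phi2} would fail. With the correct curve, Gauss's theorem (write $p=a^2+b^2$ with $a$ odd and $a+b\equiv 1 \imod 4$; then $a_p=2a$) reconciles with the normalization $x\equiv 1\imod 4$ of Corollary \ref{cor_phi2}: when $p\equiv 1\imod 8$ one has $b\equiv 0\imod 4$, so $a=x$ and $a_p=2x$; when $p\equiv 5\imod 8$ one has $b\equiv 2\imod 4$, so $a=-x$ and $a_p=-2x$; and $a_p=0$ for $p\equiv 3\imod 4$ since the curve is supersingular there. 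Adding $\varphi(-1)=(-1)^{(p-1)/2}$ in each case then reproduces the three values of Corollary \ref{cor_phi2}, completing your argument.
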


\begin{proof}
From \cite[Prop.~1 \& Thm. 2]{O} we have
\begin{equation}\label{cor_phi2_mod_for2} 
{_{2}G_{2}}
\biggl[ \begin{array}{cccc}
\frac{1}{2} & \frac{1}{2} \\[2pt]
1 & 1 \end{array}
\Big| -1 \; \biggr]_p
=
a_p(f_{32.2.a.a}).
\end{equation}
Substituting (\ref{cor_phi2_for1}) and (\ref{cor_phi2_mod_for2})  into (\ref{cor_G_n=2_phi}), when $m=2$ and $\lambda=1$, yields the result.
\end{proof}

\begin{cor}\label{cor_phi3_mod}
Consider the modular forms 
$f_{16.3.c.a}=\eta^6(4z) =\sum_{n\geq1} a_n(f_{16.3.c.a}) q^n \in S_3(\Gamma_0(16),(\tfrac{-4}{\cdot}))$ 
and 
$f_{256.2.a.a}=\sum_{n\geq1} a_n(f_{256.2.a.a}) q^n \in S_2(\Gamma_0(256))$. 
If $p$ is an odd prime, then 
\begin{equation*}
{_{6}G_{6}}
\biggl[ \begin{array}{cccccc} 
\frac{1}{4} & \frac{3}{4} & \frac{1}{4} & \frac{3}{4} & \frac{1}{4} & \frac{3}{4} \\[2pt]
1 & \frac{1}{2} & 1 & \frac{1}{2} & 1 & \frac{1}{2}
\end{array}
\Big| \; 1 \; \biggr]_p
=
a_p(f_{16.3.c.a})+ \varphi(2) \cdot a_{p^2}(f_{256.2.a.a}).
\end{equation*}
\end{cor}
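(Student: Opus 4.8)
The plan is to follow the strategy used for Corollaries \ref{cor_phi2_mod} and \ref{cor_phi3}: invoke the splitting identity (\ref{cor_G_n=2_phi}) with $m=3$ and $\lambda=1$ to write the ${_{6}G_{6}}$ in the statement as ${_{3}G_{3}}[\tfrac12,\tfrac12,\tfrac12;1,1,1 \mid 1]_p + {_{3}G_{3}}[\tfrac12,\tfrac12,\tfrac12;1,1,1 \mid -1]_p$, and then match each of the two summands with one of the modular-form contributions. For the first summand, combine (\ref{cor_phi3_for1}) with the evaluation of the $p$-th coefficient of the CM eta quotient $\eta^6(4z)$: for $p \equiv 3 \imod 4$ one has $a_p(f_{16.3.c.a}) = 0$, while for $p \equiv 1 \imod 4$, writing $p = x^2+y^2$ with $x$ odd (consistent with the normalization of Corollary \ref{cor_phi3}), one has $a_p(f_{16.3.c.a}) = 2x^2-2y^2 = 4x^2-2p$; hence ${_{3}G_{3}}[\tfrac12,\tfrac12,\tfrac12;1,1,1\mid 1]_p = a_p(f_{16.3.c.a})$.

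For the second summand, recall (from \cite{LMFDB}) that $f_{256.2.a.a}$ has complex multiplication by $\mathbb{Q}(\sqrt{-2})$. Since the level $256=2^8$ is coprime to the odd prime $p$, the Hecke recursion for a weight-$2$ newform with trivial nebentypus gives $a_{p^2}(f_{256.2.a.a}) = a_p(f_{256.2.a.a})^2 - p$. When $\left(\tfrac{-2}{p}\right) = -1$, i.e.\ $p \equiv 5,7 \imod 8$, the CM structure forces $a_p(f_{256.2.a.a}) = 0$, so $a_{p^2}(f_{256.2.a.a}) = -p$; when $p \equiv 1,3 \imod 8$, writing $p = u^2+2v^2$, one has $a_p(f_{256.2.a.a})^2 = 4u^2$, so $a_{p^2}(f_{256.2.a.a}) = 4u^2-p$. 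Multiplying through by $\varphi(2) = \left(\tfrac{2}{p}\right)$, which equals $+1$ for $p \equiv 1,7 \imod 8$ and $-1$ for $p \equiv 3,5 \imod 8$, and comparing the four residue classes modulo $8$ with (\ref{cor_phi3_for2}), one finds that ${_{3}G_{3}}[\tfrac12,\tfrac12,\tfrac12;1,1,1\mid -1]_p = \varphi(2)\cdot a_{p^2}(f_{256.2.a.a})$ in every case.

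Substituting these two identities into (\ref{cor_G_n=2_phi}) at $m=3$, $\lambda=1$ then yields the corollary; equivalently, one may simply verify that $a_p(f_{16.3.c.a}) + \varphi(2)\,a_{p^2}(f_{256.2.a.a})$ reproduces the piecewise formula already recorded in Corollary \ref{cor_phi3}. The only substantive (if elementary) obstacle is the bookkeeping: fixing the sign normalization of $x$ so that $a_p(\eta^6(4z)) = 2x^2-2y^2$ rather than its negative — pinned down by a single numerical check such as $a_5(\eta^6(4z)) = -6 = 2 \cdot 1^2 - 2 \cdot 2^2$ — and carrying out the mod-$8$ case analysis for $\varphi(2)\,a_{p^2}$ cleanly. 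Once these are settled the result is immediate.
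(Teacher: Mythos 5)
Your proposal is correct, and its skeleton is the paper's: apply (\ref{cor_G_n=2_phi}) with $m=3$, $\lambda=1$ and identify the two resulting ${_{3}G_{3}}$ values with the modular-form quantities. The difference lies in how those two identifications are justified. The paper simply quotes them: the $\lambda=1$ relation (\ref{cor_phi3_mod_for1}) is taken from \cite{M}, and the $\lambda=-1$ relation (\ref{cor_phi3_mod_for2}) from Theorems 5 and 6 of \cite{O}, so no case analysis is needed. You instead re-derive both from the explicit Gauss-sum evaluations already recorded in (\ref{cor_phi3_for1}) and (\ref{cor_phi3_for2}), matched against the CM descriptions of the two forms: $a_p(\eta^6(4z))=2x^2-2y^2=4x^2-2p$ with $x$ odd for $p\equiv 1\imod 4$ (and $0$ otherwise), and, for $f_{256.2.a.a}$ with CM by $\mathbb{Q}(\sqrt{-2})$, $a_{p^2}=a_p^2-p$ equal to $4u^2-p$ or $-p$ according as $p\equiv 1,3$ or $5,7 \imod 8$; your mod-$8$ bookkeeping with $\varphi(2)$ checks out in all four classes and is consistent with Corollary \ref{cor_phi3}. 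What your route buys is self-containedness relative to the evaluations already in the paper (no appeal to \cite{M} or to the elliptic-curve theorems of \cite{O}); what it costs is exactly the normalization care you flag — one must verify the CM coefficient formulas and that the $x$ in (\ref{cor_phi3_for1}) is the odd representative, which your numerical anchor $a_5(\eta^6(4z))=-6$ settles. Either way the substitution into (\ref{cor_G_n=2_phi}) completes the proof, so the argument stands.
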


\begin{proof}
The relation
\begin{equation}\label{cor_phi3_mod_for1} 
{_{3}G_{3}}
\biggl[ \begin{array}{cccc}
\frac{1}{2} & \frac{1}{2} & \frac{1}{2} \\[2pt]
1 & 1 & 1 \end{array}
\Big| \; 1 \; \biggr]_p
=
a_p(f_{16.3.c.a})
\end{equation}
corresponds to one of Rodriguez Villegas supercongruence conjectures \cite{RV} and can be found in\cite{M}.
From \cite[Thms.~5 \& 6]{O} we have
\begin{equation}\label{cor_phi3_mod_for2} 
{_{3}G_{3}}
\biggl[ \begin{array}{cccc}
\frac{1}{2} & \frac{1}{2} & \frac{1}{2} \\[2pt]
1 & 1 & 1 \end{array}
\Big| -1 \; \biggr]_p
=
\varphi(2) (a_p(f_{256.2.a.a})^2-p)
=
\varphi(2) \cdot a_{p^2}(f_{256.2.a.a}).
\end{equation}
Substituting (\ref{cor_phi3_mod_for1}) and (\ref{cor_phi3_mod_for2}) into (\ref{cor_G_n=2_phi}), when $m=3$ and $\lambda=1$, yields the result.
\end{proof}

In fact, we can produce similar results to Corollary \ref{cor_phi3_mod}, for almost any rational $\lambda$, via the following result of Ono's combined with the modularity theorem.
\begin{theorem}[\cite{O} Thm.~5]\label{thm_Ono_3F2_EC}
Let $t \in \mathbb{Q}-\{0,4\}$ and consider the elliptic curve $E_t/\mathbb{Q}$ given by 
$$E_{t}: y^2= x^3 - t^2 x^2 + (4t^3-t^4) x + t^6-4t^5.$$
If $p$ is an odd prime for which  ${\rm ord}_p(t(t-4))= 0$, then
\begin{align*}
{_{3}G_{3}}
\biggl[ \begin{array}{cccc}
\frac{1}{2} & \frac{1}{2} & \frac{1}{2} \\[2pt]
1 & 1 & 1 \end{array}
\Big| \; \frac{4- t}{4} \; \biggr]_p
= 
\varphi(t^2-4t)\left(a_p(E_{t})^2-p\right),
\end{align*}
where $a_p(E_{t}) =  p+1 - \#E_t(\mathbb{F}_p).$
\end{theorem}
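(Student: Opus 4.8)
The statement is \cite[Thm.~5]{O} recast in the ${}_{m}G_{m}$ notation of Definition \ref{def_Gq}, so the plan is to reconcile normalizations and then quote Ono's identity. First I would use Lemma \ref{lem_G_to_F}: the parameters $a_i=\tfrac12$, $b_i=1$ $(1\leq i\leq 3)$ correspond to the characters $A_i=\bar\omega^{(p-1)/2}=\varphi$ and $B_i=\bar\omega^{p-1}=\varepsilon$, so
\[
{_{3}G_{3}}
\biggl[ \begin{array}{ccc} \tfrac{1}{2} & \tfrac{1}{2} & \tfrac{1}{2} \\[1pt] 1 & 1 & 1 \end{array}
\Big| \; \tfrac{4-t}{4} \; \biggr]_{p}
=
{_{3}F_{3}}
\biggl( \begin{array}{ccc} \varphi & \varphi & \varphi \\[1pt] \varepsilon & \varepsilon & \varepsilon \end{array}
\Big| \; \tfrac{4}{4-t} \biggr)_{p}.
\]
It then remains to pass from the first author's normalized ${}_{3}F_{3}$ to Greene's Gaussian ${}_{3}F_{2}$ with quadratic-character entries, which is the object appearing in \cite[Thm.~5]{O}; comparing Definition \ref{def_F} with Greene's definition (cf.\ \cite{McC6}), the two are related by an explicit constant built from $g(\varepsilon)=-1$ and $g(\varphi)^2=\varphi(-1)p$, together with the standard relation between the two argument conventions. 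Under this dictionary Ono's theorem transcribes verbatim, with the curve $E_t$, the exceptional values $t\in\mathbb{Q}\setminus\{0,4\}$, the hypothesis $\mathrm{ord}_p(t(t-4))=0$, and the twisting factor $\varphi(t^2-4t)$ all preserved, which gives the asserted identity.

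For completeness one can recall why Ono's identity holds. He rewrites the Gaussian ${}_{3}F_{2}$ at $\tfrac{4-t}{4}$ as a binary character sum and, using Greene's reduction and summation formulas \cite{G}, brings it to the shape $\sum_{x\in\mathbb{F}_p}\varphi\bigl(c_1(x)\bigr)\,\varphi\bigl(c_2(x)\bigr)$ with $c_1,c_2$ suitable cubics; he then recognizes this as a quadratic twist of the square of the Frobenius character sum of $E_t$. The term $a_p(E_t)^2$ appears through multiplicativity of the point count over the fibered product $E_t\times E_t$, the $-p$ is the diagonal correction, and $\varphi(t^2-4t)$ records the twist. Substituting $\lambda=\tfrac{4-t}{4}$ and $\#E_t(\mathbb{F}_p)=p+1-a_p(E_t)$ then yields the stated formula.

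The only genuine work in deducing Theorem \ref{thm_Ono_3F2_EC} in the form above is the normalization check of the first paragraph, which is routine. The substantive part — and hence the main obstacle in a self-contained proof — is the geometric step underlying Ono's identity: the birational identification of the affine variety cut out by the ${}_{3}F_{2}$ character sum with a Kummer-type quotient of $E_t\times E_t$, and the correct bookkeeping of the quadratic twist through that identification. Since we are content to cite \cite{O} for this input, no further argument is needed.
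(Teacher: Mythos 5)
Your proposal is essentially the paper's own treatment: the paper gives no proof of this statement, quoting it directly from \cite{O} (Thm.~5) after recasting it in the ${_{3}G_{3}}$ normalization, which is exactly your plan of converting via Lemma \ref{lem_G_to_F} and the standard dictionary between the normalized ${_{3}F_{3}}$ and Greene's ${_{3}F_{2}}$, then citing Ono for the substance. The only caution is that ``transcribes verbatim'' quietly absorbs the routine but non-trivial bookkeeping you allude to --- the factor of $p^2$ coming from Greene's normalization and the inversion of the argument $\lambda$ (so that the quadratic-twist factor $\varphi(t^2-4t)$ comes out exactly as stated) --- which is precisely the check one must carry out when quoting \cite{O} in this form.
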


Here are some examples where one of the modular forms has complex multiplication (CM), corresponding to the elliptic curves in \cite[Thm.~6]{O}.
\begin{cor}\label{cor_phi3_mod_t}
Let 
$$
{_{6}G_{6}}
\bigl[\; \lambda \; \bigr]_p
:=
{_{6}G_{6}}
\biggl[ \begin{array}{cccccc} 
\frac{1}{4} & \frac{3}{4} & \frac{1}{4} & \frac{3}{4} & \frac{1}{4} & \frac{3}{4} \\[2pt]
1 & \frac{1}{2} & 1 & \frac{1}{2} & 1 & \frac{1}{2}
\end{array}
\Big| \; \lambda \; \biggr]_p.
$$
\begin{enumerate}
\item
For $p\neq 2,3$,
\begin{align*}
{_{6}G_{6}}
\bigl[ \; \tfrac{1}{64} \; \bigr]_p
&=
\varphi(-7) \cdot a_{p^2}(f_{1568.2.a.a}) + a_{p^2}(f_{32.2.a.a})\\
&=
\varphi(-7) \cdot a_{p^2}(f_{1568.2.a.a})
+
\begin{cases}
-p & \textup{if } p \equiv 3 \imod 4,\\
4x^2-p & \textup{if } p \equiv 1 \imod 4,\\
\end{cases}
\end{align*}
where  $p=x^2+y^2 \equiv 1 \imod 4$, with $x$ odd.

\item
For $p\neq 2,3$,
\begin{align*}
{_{6}G_{6}}
\bigl[ \; 64 \; \bigr]_p
&=
\varphi(14) \cdot a_{p^2}(f_{1568.2.a.a}) + \varphi(2) \cdot a_{p^2}(f_{32.2.a.a})\\
&=
\varphi(14) \cdot a_{p^2}(f_{1568.2.a.a})
+\varphi(2)
\begin{cases}
- p & \textup{if } p \equiv 3 \imod 4,\\
4x^2-p & \textup{if } p \equiv 1 \imod 4,\\
\end{cases}
\end{align*}
where  $p=x^2+y^2 \equiv 1 \imod 4$, with $x$ odd.

\item
For $p\neq 2,5$,
\begin{align*}
{_{6}G_{6}}
\bigl[ \; \tfrac{1}{16} \; \bigr]_p
&=
\varphi(5) \cdot a_{p^2}(f_{200.2.a.b}) + \varphi(-3) \cdot a_{p^2}(f_{36.2.a.a})\\
&=
\varphi(5) \cdot a_{p^2}(f_{200.2.a.b})
+\varphi(-3)
\begin{cases}
- p & \textup{if } p \equiv 2 \imod 3,\\
4x^2-p & \textup{if } p \equiv 1 \imod 3,\\
\end{cases}
\end{align*}
where  $p=x^2+3y^2 \equiv 1 \imod 3$.

\item
For $p\neq 2,5$,
\begin{align*}
{_{6}G_{6}}
\bigl[ \; 16 \; \bigr]_p
&=
\varphi(5) \cdot a_{p^2}(f_{200.2.a.b}) + \varphi(3) \cdot a_{p^2}(f_{36.2.a.a})\\
&=
\varphi(5) \cdot a_{p^2}(f_{200.2.a.b})
+\varphi(3)
\begin{cases}
- p & \textup{if } p \equiv 2 \imod 3,\\
4x^2-p & \textup{if } p \equiv 1 \imod 3,\\
\end{cases}
\end{align*}
where  $p=x^2+3y^2 \equiv 1 \imod 3$.

\item
For $p\neq 2,3,5,13$,
\begin{align*}
{_{6}G_{6}}
\bigl[ \; \tfrac{1}{4096} \; \bigr]_p
&=
\varphi(65) \cdot a_{p^2}(f_{4225.2.a.h}) + \varphi(-7) \cdot a_{p^2}(f_{49.2.a.a})\\
&=
\varphi(65) \cdot a_{p^2}(f_{4225.2.a.h})
+\varphi(-7)
\begin{cases}
- p & \textup{if } p \equiv 3,5,6 \imod 7,\\
4x^2-p & \textup{if } p \equiv 1,2,4 \imod 7,\\
\end{cases}
\end{align*}
where  $p=x^2+7y^2 \equiv 1,2,4 \imod 7$.

\item
For $p\neq 2,3,5,13$,
\begin{align*}
{_{6}G_{6}}
\bigl[ \; 4096 \; \bigr]_p
&=
\varphi(65) \cdot a_{p^2}(f_{4225.2.a.h}) + \varphi(7) \cdot a_{p^2}(f_{49.2.a.a})\\
&=
\varphi(65) \cdot a_{p^2}(f_{4225.2.a.h})
+\varphi(7)
\begin{cases}
- p & \textup{if } p \equiv 3,5,6 \imod 7,\\
4x^2-p & \textup{if } p \equiv 1,2,4 \imod 7,\\
\end{cases}
\end{align*}
where  $p=x^2+7y^2 \equiv 1,2,4 \imod 7$.
\end{enumerate}
\end{cor}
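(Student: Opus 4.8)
The plan is to prove each part of the corollary by combining the $m=3$ case of~\eqref{cor_G_n=2_phi} with Ono's evaluation, Theorem~\ref{thm_Ono_3F2_EC}. In each part the argument of the left-hand ${_{6}G_{6}}$ is a perfect square $\lambda^{2}$: $\lambda=\tfrac18$ in (1), $\lambda=8$ in (2), $\lambda=\tfrac14$ in (3), $\lambda=4$ in (4), $\lambda=\tfrac1{64}$ in (5) and $\lambda=64$ in (6). Then~\eqref{cor_G_n=2_phi} with $m=3$ gives
\begin{equation*}
{_{6}G_{6}}\bigl[\,\lambda^{2}\,\bigr]_p
=
{_{3}G_{3}}\biggl[\begin{array}{ccc}\tfrac12 & \tfrac12 & \tfrac12\\[2pt] 1 & 1 & 1\end{array}\Big|\;\lambda\;\biggr]_p
+
{_{3}G_{3}}\biggl[\begin{array}{ccc}\tfrac12 & \tfrac12 & \tfrac12\\[2pt] 1 & 1 & 1\end{array}\Big|\,-\lambda\;\biggr]_p ,
\end{equation*}
so the problem reduces to evaluating the two ``$\pm\lambda$'' specializations of ${_{3}G_{3}}$.

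For each sign, solving $\tfrac{4-t}{4}=\pm\lambda$ gives $t=4(1\mp\lambda)$, and Theorem~\ref{thm_Ono_3F2_EC} yields ${_{3}G_{3}}[\pm\lambda]_p=\varphi(t^{2}-4t)\bigl(a_p(E_t)^{2}-p\bigr)$ whenever $\mathrm{ord}_p\bigl(t(t-4)\bigr)=0$. Since $t(t-4)=16\,\lambda(\lambda\mp1)$ and $16$ is a square, the two character factors are $\varphi(\lambda(\lambda-1))$ and $\varphi(\lambda(\lambda+1))$, which after reducing modulo squares are exactly the two quadratic characters displayed in that part (for instance $\varphi(-7)$ and $\varphi(1)=1$ in (1); $\varphi(14)$ and $\varphi(2)$ in (2); $\varphi(-3)$ and $\varphi(5)$ in (3), and so on). Next I would invoke the modularity theorem to replace $a_p(E_t)$ by $a_p(f)$, where $f$ is the weight-two newform of trivial nebentypus attached to $E_t$: the curves $E_t$ arising here are precisely those tabulated in \cite[Thm.~6]{O}, and $f$ is the form named in the statement ($f_{32.2.a.a}$ and $f_{1568.2.a.a}$ in (1)--(2); $f_{36.2.a.a}$ and $f_{200.2.a.b}$ in (3)--(4); $f_{49.2.a.a}$ and $f_{4225.2.a.h}$ in (5)--(6)). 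At the good prime $p$ the Hecke relation $a_{p^{2}}(f)=a_p(f)^{2}-p$ then turns $a_p(E_t)^{2}-p$ into $a_{p^{2}}(f)$, which establishes the first displayed equality in each part.

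For the second equality I would substitute the classical CM point counts of the distinguished form of each part, exactly as recorded in \cite[Thm.~6]{O}: $f_{32.2.a.a}$ has CM by $\mathbb{Q}(\I)$, so $a_p=0$ when $p\equiv3\imod4$ and $a_p^{2}=4x^{2}$ when $p=x^{2}+y^{2}\equiv1\imod4$ with $x$ odd, whence $a_{p^{2}}=a_p^{2}-p\in\{-p,\;4x^{2}-p\}$; likewise $f_{36.2.a.a}$ has CM by $\mathbb{Q}(\sqrt{-3})$ and $f_{49.2.a.a}$ has CM by $\mathbb{Q}(\sqrt{-7})$, producing the case splittings by $p\bmod3$ and $p\bmod7$ in terms of $p=x^{2}+3y^{2}$ and $p=x^{2}+7y^{2}$. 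Substituting these into the first equality gives the stated closed forms.

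The step I expect to be the main obstacle is the bookkeeping of the excluded primes, together with the precise matching of each $E_t$ to its LMFDB newform. For the two relevant values of $t$, the hypothesis $\mathrm{ord}_p\bigl(t(t-4)\bigr)=0$ fails only at $p=2$ and at the odd primes dividing the numerator of $\lambda(\lambda-1)$ or of $\lambda(\lambda+1)$; the union of these with $\{2\}$ is one prime longer than the list in each part (the extra prime being $7$ in (1), (2), (5), (6) and $3$ in (3), (4)). At that extra prime $p$, the ${_{3}G_{3}}$ to which Theorem~\ref{thm_Ono_3F2_EC} does not apply has argument $\equiv1\imod p$ --- because $p$ divides the numerator of $t=4(1-\lambda)$ exactly when $\lambda\equiv1\imod p$ --- and since that prime is $\equiv3\imod4$, this term vanishes by~\eqref{cor_phi3_for1}; meanwhile the matching factor $\varphi(t^{2}-4t)$ vanishes at $p$, and the newform paired with that $t$ has additive reduction at $p$ (its conductor is divisible by $p^{2}$), so its $p^{2}$-th Fourier coefficient also vanishes. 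Thus both sides lose that term consistently, the remaining ${_{3}G_{3}}$ is still governed by Theorem~\ref{thm_Ono_3F2_EC}, and the identity persists, so $p$ need not be excluded, matching the prime conditions in the statement. The residual chore is to check, case by case, that the minimal model of each $E_t$ has the asserted conductor and agrees with the named newform on enough coefficients; this is routine but not automatic.
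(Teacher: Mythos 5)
Your overall architecture is the paper's: split ${_{6}G_{6}}[\lambda^2]$ via \eqref{cor_G_n=2_phi} with $m=3$, evaluate each ${_{3}G_{3}}[\pm\lambda]$ by Theorem \ref{thm_Ono_3F2_EC}, pass to newforms via modularity and $a_{p^2}(f)=a_p(f)^2-p$ at good primes, and use the CM evaluations of \cite[Thm.~6]{O} for the second displayed equality. For parts (1), (3), (5) your $t$-values ($\tfrac72,\tfrac92$; $3,5$; $\tfrac{63}{16},\tfrac{65}{16}$) are exactly the paper's, your character bookkeeping $\varphi(\lambda(\lambda\mp1))$ is correct, and your explicit treatment of the one stray prime not covered by Ono's hypothesis ($7$ in (1), (2), (5), (6); $3$ in (3), (4)) --- the offending ${_3G_3}$ has argument $\equiv 1 \imod p$ with $p\equiv 3\imod 4$, so it vanishes by \eqref{cor_phi3_for1}, while $\varphi$ and $a_{p^2}$ of the paired form (conductor divisible by $p^2$) vanish on the other side --- is a point the paper leaves implicit, and it is right.

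The genuine difference, and the one gap, is in parts (2), (4), (6). The paper does not apply Theorem \ref{thm_Ono_3F2_EC} at $\lambda=8,4,64$; it first invokes the inversion ${_3G_3}[\lambda]_p=\varphi(-\lambda)\,{_3G_3}[1/\lambda]_p$ (\cite[Thm.~4.2]{G2}), so only the curves already used in (1), (3), (5) ever appear. You instead take $t=4(1\mp\lambda)$ directly, producing the curves $E_{-28},E_{36}$ (part (2)), $E_{-12},E_{20}$, $E_{-252},E_{260}$, and assert these are the curves tabulated in \cite[Thm.~6]{O} and are attached to the named forms. That identification is false: one computes $j(E_t)=256(t-3)^3/(t-4)$, so for example $j(E_{36})=287496$, and $E_{36}$ is a quadratic twist of a curve in the conductor-$32$ class rather than lying in it; likewise $E_{-28}$ is precisely the twist by $-2$ of the curve $2$-isogenous to $E_{7/2}$, so its newform is the $\chi_{-2}$-twist of $f_{1568.2.a.a}$, which (since that form is non-CM) is a different newform with a different label. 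Consequently your closing verification step --- matching the minimal model of each $E_t$ to the named newform --- would fail for these parts. The identity survives because only $a_p(E_t)^2-p$ enters, and this quantity is unchanged under quadratic twist at odd good primes prime to the twisting conductor, and those exceptional primes are either excluded or annihilated by the $\varphi$-factors; but you must state this twist-invariance explicitly, or simply adopt the paper's inversion step, which reduces (2), (4), (6) to the computations of (1), (3), (5) with no new curves. (A milder version of the same caveat already affects the CM companions in (1), (3), (5): e.g.\ $E_{9/2}\cong y^2=x^3-54^2x$ is itself a twist of the conductor-$32$ curve, which is why only $a_{p^2}(f_{32.2.a.a})$, and not the curve's own label, can legitimately be quoted for $p\neq2,3$.)
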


\begin{proof}
We use (\ref{cor_G_n=2_phi}) with $m=3$. We then consider Theorem \ref{thm_Ono_3F2_EC} for appropriate values of $t$ and combine with the modularity theorem (curves and modular forms are determined in Sage \cite{SAGE} and LMFDB \cite{LMFDB}). We follow \cite[Thm.~6]{O} for evaluation of coefficients in the CM forms.
(1) $t=\frac{7}{2}, \frac{9}{2}$.
(3) $t=5,3$.
(5) $t= \frac{65}{16}, \frac{63}{16}$.
For cases (2), (4) and (6) we note that \cite[Thm.~4.2]{G2}
$$
{_{3}G_{3}}
\biggl[ \begin{array}{cccc}
\frac{1}{2} & \frac{1}{2} & \frac{1}{2} \\[2pt]
1 & 1 & 1 \end{array}
\Big| \; \lambda \; \biggr]_p
=
\varphi(-\lambda) \cdot
{_{3}G_{3}}
\biggl[ \begin{array}{cccc}
\frac{1}{2} & \frac{1}{2} & \frac{1}{2} \\[2pt]
1 & 1 & 1 \end{array}
\Big| \; \frac{1}{\lambda} \; \biggr]_p.
$$
\end{proof}

We can perform a similar exercise in the case of $m=2$ for $a_1=\frac{1}{8}$, $a_2=\frac{3}{8}$, $b_1=\frac{1}{6}$ and $b_2=\frac{1}{3}$, using the following result of the first author.
\begin{theorem}[\cite{McC7} Thm. 1.2]\label{thm_trace}
Let $p>3$ be prime. Consider an elliptic curve $E_{a,b}/\mathbb{F}_p$ of the form $E_{a,b}:y^2 = x^3+ax+b$, with $j(E_{a,b})\neq 0, 1728$. 
Define 
$a_p(E_{a,b}) := p+1-\# E_{a,b}(\mathbb{F}_p)$.
Then
\begin{equation}\label{for_main}
a_p(E_{a,b}) = \varphi(b) \cdot p \cdot
{_{2}G_{2}}
\biggl[ \begin{array}{cc} 
\frac{1}{4} & \frac{3}{4}\\[2pt]
\frac{1}{3}  & \frac{2}{3} \end{array}
\Big| \; {-\frac{27b^2}{4a^3}} \; \biggr]_p. 
\end{equation}
\end{theorem}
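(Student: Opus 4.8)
\emph{Proof proposal.}
The plan is to turn the trace of Frobenius into a multiplicative character sum over $\mathbb{F}_p$, normalise the Weierstrass model so that the sum depends only on $\lambda:=-\tfrac{27b^2}{4a^3}$, evaluate it in terms of Gauss sums over $\mathbb{F}_p$, and then translate everything to Morita's $p$-adic gamma function through the Gross--Koblitz formula so as to recognise the right-hand side of~(\ref{for_main}). To begin, I would count points: including the point at infinity one has $\#E_{a,b}(\mathbb{F}_p)=p+1+\sum_{x\in\mathbb{F}_p}\varphi(x^3+ax+b)$, so $a_p(E_{a,b})=-\sum_{x}\varphi(x^3+ax+b)$. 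Since $j(E_{a,b})\ne 0,1728$ forces $a,b\ne 0$, the substitution $x\mapsto-\tfrac{3b}{2a}x$ is a bijection of $\mathbb{F}_p$ taking $x^3+ax+b$ to $\tfrac{b}{2}\bigl(\lambda x^3-3x+2\bigr)$; thus $E_{a,b}$ is over $\mathbb{F}_p$ the quadratic twist by $\tfrac{b}{2}$ of $v^2=\lambda x^3-3x+2$, and after cancelling the resulting factor $\varphi(b)$ it remains to prove the one-variable identity
\[
\varphi(2)\sum_{x\in\mathbb{F}_p}\varphi\bigl(\lambda x^3-3x+2\bigr)=-\,p\cdot{_{2}G_{2}}\biggl[\begin{array}{cc}\tfrac14&\tfrac34\\[2pt]\tfrac13&\tfrac23\end{array}\Big|\;\lambda\;\biggr]_p,
\]
where $\lambda\ne 0$ since $b\ne 0$ and $\lambda\ne 1$ since the discriminant $4a^3+27b^2$ is nonzero.

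The next step is to evaluate $S(\lambda):=\sum_x\varphi(\lambda x^3-3x+2)$ in terms of Gauss sums over $\mathbb{F}_p$ by the standard character-theoretic treatment of a plane-curve point count. One convenient organisation is to write $p+S(\lambda)=\#\{(x,v):v^2=\lambda x^3-3x+2\}=\sum_v R_v$, where $R_v$ is the number of roots in $\mathbb{F}_p$ of $\lambda x^3-3x+(2-v^2)$; the classical count of $\mathbb{F}_p$-roots of a cubic splits off a quadratic character of the discriminant --- a quartic in $v$ which, being even in $v$, descends under $v\mapsto v^2$ to a hypergeometric character sum of strictly lower order --- together with a cubic-residue term, and the quadratic and cubic structures are then reorganised by the Hasse--Davenport product formula (Theorem~\ref{thm_HD}) applied with $n=2$ and $n=3$. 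This presents $S(\lambda)$ as $\tfrac{1}{p-1}$ times a sum over $\chi\in\widehat{\mathbb{F}_p^{*}}$ of products of Gauss sums whose arguments are $\chi$ times fixed characters, weighted by $\chi(\lambda)$ --- a closed formula valid for \emph{every} prime $p$, which for $p\equiv1\imod{12}$ matches, via Lemma~\ref{lem_G_to_F}, a finite-field ${_{2}F_{2}}$ in the order-$4$ and order-$3$ characters of $\mathbb{F}_p^{*}$. To conclude, apply the Gross--Koblitz formula (Theorem~\ref{thm_GrossKoblitz}) to each $g(\bar{\omega}^{\,j})$ occurring: the powers of $\pi_p$ recombine into the $(-p)^{-\lfloor\cdot\rfloor}$ weights in Definition~\ref{def_Gq}, each Gauss sum becomes a product of $\Gamma_p$-values, and the Gross--Koblitz multiplication formula (Theorem~\ref{thm_GrossKoblitzMult}) --- used at the moduli forced by the equation, producing exactly the fractional parts $\tfrac14,\tfrac34$ and $\tfrac13,\tfrac23$ --- brings the expression into the shape of Definition~\ref{def_Gq} with $m=2$, $(a_1,a_2)=(\tfrac14,\tfrac34)$ and $(b_1,b_2)=(\tfrac13,\tfrac23)$; re-indexing the character sum as a sum over $j=0,\dots,p-2$ and matching $\bar{\omega}^{\,j}(\lambda)$, the $\Gamma_p$-products and the $(-p)$-powers then yields~(\ref{for_main}).

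The main obstacle is making the evaluation and its $p$-adic translation work \emph{uniformly in $p$}. In the residue classes $p\equiv2\imod3$ or $p\equiv3\imod4$ the cubic $\lambda x^3-3x+2$ need not split over $\mathbb{F}_p$ and there are no order-$3$ or order-$4$ characters of $\mathbb{F}_p^{*}$ to work with, so one cannot simply quote a finite-field ${_{2}F_{1}}$ or ${_{2}F_{2}}$ evaluation; it is precisely here that the $p$-adic function ${_{2}G_{2}}$ is the correct object, with the denominators $3$ and $4$ in the answer supplied by the $p$-adic Gross--Koblitz multiplication formula rather than by genuine characters. The most error-prone part of the argument is keeping track of the floor-function exponents so that every power of $\pi_p$ produced by Gross--Koblitz reassembles into an integral power of $-p$, as demanded by the shape of Definition~\ref{def_Gq}.
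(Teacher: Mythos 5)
The paper does not actually prove Theorem \ref{thm_trace}; it quotes it verbatim from \cite{McC7}, so your attempt has to be measured against the proof given there. Your opening reduction is correct and matches that proof in spirit: $j(E_{a,b})\neq 0,1728$ forces $a,b\neq 0$, the scaling $x\mapsto-\tfrac{3b}{2a}x$ turns $x^3+ax+b$ into $\tfrac{b}{2}(\lambda x^3-3x+2)$ with $\lambda=-\tfrac{27b^2}{4a^3}\neq 0,1$, and the theorem becomes the one-parameter identity you display. Your endgame is also the right one: in \cite{McC7} the denominators $4$ and $3$ in the parameters $\tfrac14,\tfrac34;\tfrac13,\tfrac23$ do come from the Gross--Koblitz formula (Theorem \ref{thm_GrossKoblitz}) combined with the $p$-adic multiplication formula (Theorem \ref{thm_GrossKoblitzMult}), not from genuine characters of order $3$ or $4$.

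The genuine gap is the middle step, which is the substance of the theorem. You assert a Gauss-sum formula for $\sum_x\varphi(\lambda x^3-3x+2)$ ``valid for every prime $p$'', obtained by counting the roots $R_v$ of the cubic through the quadratic character of its discriminant plus a ``cubic-residue term'' and then reorganizing with Hasse--Davenport (Theorem \ref{thm_HD}) at $n=2$ and $n=3$. Two things go wrong. First, the root count of a cubic over $\mathbb{F}_p$ is not such a two-term character expression: $\varphi(\mathrm{disc})$ only separates one root from zero-or-three roots, and distinguishing $0$ from $3$ requires cubic information that is not given by any character of $\mathbb{F}_p^{*}$ unless $p\equiv 1\imod 3$; the claimed descent of the quartic-in-$v$ discriminant under $v\mapsto v^2$ is likewise unjustified. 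Second, Hasse--Davenport with $n=3$ (and any appeal to $\chi_3$, $\chi_4$) requires $p\equiv 1\imod 3$ (resp.\ $\imod 4$), so the route you sketch can at best yield the identity for $p\equiv 1\imod{12}$, and there is no continuity-in-$p$ principle that propagates a prime-by-prime identity to the other residue classes. You correctly diagnose in your closing paragraph that uniformity must come from the $\Gamma_p$ multiplication formula rather than from actual characters, but your derivation never produces, for general $p$, a character-sum expression to which that remark can be applied. The proof in \cite{McC7} secures exactly this step by a different device: the point count is expanded using the additive character and Gauss-sum orthogonality (in the style of \cite{F2}), so that only a generic character $T^j$ appears, with exponents $2j$ and $3j$, for every $p>3$; only after Gross--Koblitz does Theorem \ref{thm_GrossKoblitzMult} convert those into the parameters $\tfrac14,\tfrac34,\tfrac13,\tfrac23$ of ${_2G_2}$. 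Without that (or an equivalent) uniform evaluation, your outline establishes at most the case $p\equiv 1\imod{12}$.
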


Combining Theorem \ref{thm_trace} and Corollary \ref{cor_G_n=2}, with $m=2$, the following corollary is immediate.
\begin{cor}\label{cor_Trace}
Let $p>3$ be prime.
For $E_{a,b}/\mathbb{F}_p$ and $E_{-a,b}/\mathbb{F}_p$ elliptic curves with $j(E_{\pm a,b})\neq 0, 1728$,
\begin{align*}
{_{4}G_{4}}
\biggl[ \begin{array}{cccc} 
\frac{1}{8} & \frac{5}{8} & \frac{3}{8} & \frac{7}{8} \\[2pt]
\frac{1}{6} & \frac{2}{3} & \frac{1}{3} & \frac{5}{6}
\end{array}
\Big| \;  {\frac{3^6 b^4}{2^4a^6}} \; \biggr]_p
=
\frac{\varphi(b)}{p}
\bigl(
a_p(E_{a,b}) + a_p(E_{-a,b})
\bigr).
\end{align*}
\end{cor}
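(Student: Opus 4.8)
The plan is to recognize the left-hand side as the $\lambda^{2}$-term of the $n=2$, $m=2$ splitting in Corollary~\ref{cor_G_n=2}, and then to evaluate the two lower-order $_2G_2$ summands it splits into by applying Theorem~\ref{thm_trace} once to $E_{a,b}$ and once to $E_{-a,b}$.

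First I would take $a_1=\tfrac18$, $a_2=\tfrac38$, $b_1=\tfrac16$, $b_2=\tfrac13$, so that $2a_1=\tfrac14$, $2a_2=\tfrac34$, $2b_1=\tfrac13$, $2b_2=\tfrac23$, while $\{a_1,a_1+\tfrac12,a_2,a_2+\tfrac12\}=\{\tfrac18,\tfrac58,\tfrac38,\tfrac78\}$ and $\{b_1,b_1+\tfrac12,b_2,b_2+\tfrac12\}=\{\tfrac16,\tfrac23,\tfrac13,\tfrac56\}$ are exactly the upper and lower parameter lists of the $_4G_4$ on the left. To invoke Corollary~\ref{cor_G_n=2} one checks that $(\{\tfrac14,\tfrac34\},\{\tfrac13,\tfrac23\})$ is defined over $\mathbb{Q}$: indeed
\[
\frac{(x-e^{2\pi\I/4})(x-e^{2\pi\I\cdot 3/4})}{(x-e^{2\pi\I/3})(x-e^{2\pi\I\cdot 2/3})}=\frac{x^{2}+1}{x^{2}+x+1}=\frac{(x^{4}-1)(x-1)}{(x^{2}-1)(x^{3}-1)},
\]
with corresponding exponents $(\{4,1\},\{3,2\})$. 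Putting $\lambda=\frac{27b^{2}}{4a^{3}}$, so that $\lambda^{2}=\frac{3^{6}b^{4}}{2^{4}a^{6}}$, Corollary~\ref{cor_G_n=2} then shows that the left-hand side of the statement equals
\[
{_{2}G_{2}}\biggl[\begin{array}{cc}\frac14 & \frac34\\[2pt]\frac13 & \frac23\end{array}\Big|\;\frac{27b^{2}}{4a^{3}}\;\biggr]_{p}
+{_{2}G_{2}}\biggl[\begin{array}{cc}\frac14 & \frac34\\[2pt]\frac13 & \frac23\end{array}\Big|\;-\frac{27b^{2}}{4a^{3}}\;\biggr]_{p}.
\]

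Next I would apply Theorem~\ref{thm_trace} to both curves. For $E_{a,b}$ the argument there is $-\frac{27b^{2}}{4a^{3}}$, giving $a_p(E_{a,b})=\varphi(b)\,p\cdot{_{2}G_{2}}[\,\tfrac14,\tfrac34;\tfrac13,\tfrac23\mid-\tfrac{27b^{2}}{4a^{3}}\,]_{p}$; for $E_{-a,b}$, substituting $-a$ for $a$ negates the argument, since $-\frac{27b^{2}}{4(-a)^{3}}=+\frac{27b^{2}}{4a^{3}}$, so $a_p(E_{-a,b})=\varphi(b)\,p\cdot{_{2}G_{2}}[\,\tfrac14,\tfrac34;\tfrac13,\tfrac23\mid\tfrac{27b^{2}}{4a^{3}}\,]_{p}$. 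Because $j(E_{\pm a,b})\neq 0,1728$ forces $a\not\equiv 0$ and $b\not\equiv 0\pmod p$, these arguments are well defined and $\varphi(b)=\pm1$; solving each identity for its $_2G_2$, substituting into the displayed sum, and using $\varphi(b)^{-1}=\varphi(b)$ gives
\[
{_{4}G_{4}}\biggl[\begin{array}{cccc}\frac18 & \frac58 & \frac38 & \frac78\\[2pt]\frac16 & \frac23 & \frac13 & \frac56\end{array}\Big|\;\frac{3^{6}b^{4}}{2^{4}a^{6}}\;\biggr]_{p}
=\frac{a_p(E_{-a,b})}{\varphi(b)\,p}+\frac{a_p(E_{a,b})}{\varphi(b)\,p}
=\frac{\varphi(b)}{p}\bigl(a_p(E_{a,b})+a_p(E_{-a,b})\bigr),
\]
which is the asserted identity (read in $\mathbb{C}_p$, the factor $1/p$ being the normalization from Theorem~\ref{thm_trace}).

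The computation is mechanical once Corollary~\ref{cor_G_n=2} and Theorem~\ref{thm_trace} are available, so there is no genuine obstacle; the only points needing a little care are (i) verifying that $(\{\tfrac14,\tfrac34\},\{\tfrac13,\tfrac23\})$ is defined over $\mathbb{Q}$, which is the hypothesis required to apply Corollary~\ref{cor_G_n=2}; (ii) noting that $a\mapsto-a$ sends the modular argument $-27b^{2}/(4a^{3})$ to its negative, so that $E_{a,b}$ and $E_{-a,b}$ produce respectively the $-\lambda$ and $+\lambda$ terms of the split; and (iii) recording that the $j$-invariant conditions guarantee $a,b\not\equiv0\pmod p$, so that every quantity above makes sense.
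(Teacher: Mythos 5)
Your proposal is correct and follows exactly the paper's route: the paper also obtains this corollary by combining Theorem \ref{thm_trace} (applied to $E_{a,b}$ and $E_{-a,b}$, whose arguments $\mp 27b^2/(4a^3)$ differ by a sign) with Corollary \ref{cor_G_n=2} in the case $m=2$. Your extra verifications (that $(\{\tfrac14,\tfrac34\},\{\tfrac13,\tfrac23\})$ is defined over $\mathbb{Q}$ and that the $j$-invariant conditions force $a,b\not\equiv 0\pmod p$) are exactly the details the paper leaves implicit.
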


By taking different values of $a$ and $b$ we can relate special values of the ${_{4}G_{4}}$ in Corollary \ref{cor_Trace} to coefficients of modular forms. Here is one such example with $a=1$ and $b=1$. Again, we use Sage \cite{SAGE} and LMFDB \cite{LMFDB} to identify the relevant elliptic curves and modular forms. 
\begin{cor}\label{cor_Trace_eg}
For $p>3$,
\begin{equation*}
p \cdot{_{4}G_{4}}
\biggl[ \begin{array}{cccc} 
\frac{1}{8} & \frac{5}{8} & \frac{3}{8} & \frac{7}{8} \\[2pt]
\frac{1}{6} & \frac{2}{3} & \frac{1}{3} & \frac{5}{6}
\end{array}
\Big| \;  \frac{3^6}{2^4} \; \biggr]_p
=
\varphi(-1) \cdot a_{p}(f_{248.2.a.c}) +   a_{p}(f_{92.2.a.a}).
\end{equation*}
\end{cor}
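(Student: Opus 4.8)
The plan is to derive Corollary~\ref{cor_Trace_eg} from Corollary~\ref{cor_Trace} by a direct specialization followed by an appeal to the modularity theorem. First I would put $a=b=1$ in Corollary~\ref{cor_Trace}. For this choice the $j$-invariants are $j(E_{1,1})=\tfrac{6912}{31}$ and $j(E_{-1,1})=-\tfrac{6912}{23}$, neither of which is $0$ or $1728$ (even after reduction modulo any prime $p>3$), so the hypotheses of Corollary~\ref{cor_Trace} and of the underlying Theorem~\ref{thm_trace} are satisfied for every prime $p>3$; moreover $\tfrac{3^6 b^4}{2^4 a^6}=\tfrac{3^6}{2^4}$ and $\varphi(b)=\varphi(1)=1$. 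Corollary~\ref{cor_Trace} then reads
\[
p\cdot {_{4}G_{4}}\biggl[ \begin{array}{cccc} \tfrac{1}{8} & \tfrac{5}{8} & \tfrac{3}{8} & \tfrac{7}{8} \\[2pt] \tfrac{1}{6} & \tfrac{2}{3} & \tfrac{1}{3} & \tfrac{5}{6} \end{array} \Big| \; \tfrac{3^6}{2^4} \; \biggr]_p = a_p(E_{1,1})+a_p(E_{-1,1}),
\]
where $E_{1,1}\colon y^2=x^3+x+1$ and $E_{-1,1}\colon y^2=x^3-x+1$.

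Next I would identify each trace of Frobenius with a newform coefficient. By the modularity theorem each $E_{\pm 1,1}/\mathbb{Q}$ is modular, and a computation in Sage and LMFDB pins down the relevant forms: $E_{-1,1}$ has conductor $92$ and its $L$-function equals that of $f_{92.2.a.a}$, so $a_p(E_{-1,1})=a_p(f_{92.2.a.a})$ for all primes $p$; while $E_{1,1}$ is the quadratic twist by $-1$ of the elliptic curve of conductor $248$ attached to $f_{248.2.a.c}$, equivalently $L(E_{1,1},s)=L\bigl(f_{248.2.a.c}\otimes(\tfrac{-1}{\cdot}),s\bigr)$, so that $a_p(E_{1,1})=\varphi(-1)\,a_p(f_{248.2.a.c})$ for every odd prime $p$. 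Substituting these two equalities into the display above yields Corollary~\ref{cor_Trace_eg}.

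The only delicate point — the main obstacle — is the curve-to-newform dictionary in the second step. One must recognize that $E_{1,1}$ is \emph{not} itself a strong Weil curve of conductor $248$ but a $(-1)$-quadratic twist of one; this is exactly what introduces the character factor $\varphi(-1)$ in front of $a_p(f_{248.2.a.c})$, whereas $E_{-1,1}$ requires no twist. It is also worth checking that the identity persists at the bad primes $p\in\{23,31\}$ (the only primes $>3$ dividing the relevant minimal discriminants): at each such $p$ the elliptic curve has multiplicative reduction, the corresponding newform coefficient is $\pm1$, and the sign is compatible with the quadratic twist, so no exceptional cases arise. Everything else is the formal substitution $a=b=1$ into Corollary~\ref{cor_Trace} together with the equality of $L$-functions supplied by modularity.
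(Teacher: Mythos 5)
Your proposal matches the paper's proof exactly: the paper obtains the identity precisely by setting $a=b=1$ in Corollary \ref{cor_Trace} and then using Sage and the LMFDB to identify $E_{-1,1}\colon y^2=x^3-x+1$ with $f_{92.2.a.a}$ and $E_{1,1}\colon y^2=x^3+x+1$ with the $(-1)$-quadratic twist of the curve attached to $f_{248.2.a.c}$, which produces the factor $\varphi(-1)$. Your closing remark about $p=23,31$ goes beyond what the paper does, but as stated it is not a complete argument: at those primes one of $E_{\pm1,1}$ is singular modulo $p$, so Corollary \ref{cor_Trace} does not apply and confirming the identity there would require a separate evaluation of the ${_{4}G_{4}}$ value itself, not merely checking the newform coefficients at the bad primes.
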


Barman and Saikia \cite{BS} provide similar results to Theorem \ref{thm_trace} for the elliptic curves $E_1: y^2 = x^3+ax^2+b$ and $E_2: y^2 = x^3+ax^2+bx$. Their results can also be used to provide relations similar to those in Corollaries \ref{cor_Trace} and \ref{cor_Trace_eg}.

We now examine an application of (\ref{cor_G_n=2_phi}) in the case $m=4$.
\begin{cor}\label{cor_G_n=2_phi_m=4}
For $p$ an odd prime,
\begin{equation*}
{_{8}G_{8}}
\biggl[ \begin{array}{cccccccc} 
\frac{1}{4} & \frac{3}{4} & \frac{1}{4} & \frac{3}{4} & \frac{1}{4} & \frac{3}{4} & \frac{1}{4} & \frac{3}{4} \\[2pt]
1 & \frac{1}{2} & 1 & \frac{1}{2} & 1 & \frac{1}{2} & 1 & \frac{1}{2}
\end{array}
\Big| \; 1 \; \biggr]_p
=
a_{p}(f_{8.4.a.a})+a_{p}(f_{32.2.a.a}) \cdot a_{p}(f_{32.3.c.a})+p
\end{equation*}
Furthermore, the right-hand side reduces to $a_{p}(f_{8.4.a.a})+p$ when $p \equiv 3 \imod 4$.
\end{cor}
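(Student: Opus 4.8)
The plan is to feed the $n=2$ splitting of Corollary~\ref{cor_G_n=2} — in the already‑specialized shape (\ref{cor_G_n=2_phi}) — into the problem at $m=4$ and $\lambda=1$. Abbreviating $G(\mu):={}_{4}G_{4}\bigl[\tfrac12,\tfrac12,\tfrac12,\tfrac12;\,1,1,1,1 \mid \mu\bigr]_p$, the identity (\ref{cor_G_n=2_phi}) with $m=4$, $\lambda=1$ says exactly that the ${}_8G_8$ in the statement equals $G(1)+G(-1)$. So this first step is purely formal and reduces the whole corollary to the two order‑four evaluations
\[
G(1)=a_p(f_{8.4.a.a}), \qquad G(-1)=a_p(f_{32.2.a.a})\,a_p(f_{32.3.c.a})+p,
\]
which are then just added. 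I expect both to be available, up to normalization, from existing work; passage between the ${}_mF_m$ and ${}_mG_m$ conventions is handled by Lemma~\ref{lem_G_to_F}.

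For $\mu=1$ the value $G(1)$ is the $p$-adic incarnation of the Apéry‑number hypergeometric evaluation: by Ahlgren--Ono \cite{AO}, with the supercongruence sharpening of Kilbourn \cite{Ki}, one gets $G(1)=a_p(f_{8.4.a.a})$ with $f_{8.4.a.a}=\eta^4(2z)\,\eta^4(4z)\in S_4(\Gamma_0(8))$. This half is essentially classical and needs only transcription into the present normalization.

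The substantive half is $\mu=-1$. The target value $G(-1)=a_p(f_{32.2.a.a})\,a_p(f_{32.3.c.a})+p$ carries a \emph{product} of a weight‑$2$ and a weight‑$3$ Fourier coefficient, which signals that the Galois representation underlying ${}_4G_4[\tfrac12,\tfrac12,\tfrac12,\tfrac12;1,1,1,1\mid -1]$ is reducible: it should carry a rank‑four tensor factor built from the representations attached to $f_{32.2.a.a}$ and $f_{32.3.c.a}$ (of motivic weight $3$, hence of the expected size for a generic ${}_4G_4$) together with a one‑dimensional Tate piece accounting for the $+p$. I would establish this either by quoting a known evaluation of this order‑four function at $-1$, or — mirroring the proof of Corollary~\ref{cor_phi3_mod_t} — by expanding $G(-1)$ as a point count on the associated hypergeometric threefold, factoring its zeta function, and matching the factors to the two level‑$32$ newforms via the modularity theorem together with \cite{LMFDB} and \cite{SAGE}; a product formula for finite‑field hypergeometric functions of the Appell‑function type recalled in Section~\ref{sec_Intro}, splitting $G(-1)$ directly into a weight‑$2$ factor, a weight‑$3$ factor and a correction term, is a plausible alternative route. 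I expect this $\mu=-1$ evaluation to be the main obstacle: pinning down the tensor factor and, especially, the \emph{exact} $+p$ (correct sign, no spurious $\varphi(-1)$ twist) while correctly identifying both the weight‑$2$ and weight‑$3$ level‑$32$ newforms.

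Adding the two evaluations yields $a_p(f_{8.4.a.a})+a_p(f_{32.2.a.a})\,a_p(f_{32.3.c.a})+p$. For the closing reduction: $f_{32.2.a.a}=\eta^2(4z)\,\eta^2(8z)$ is the newform of the conductor‑$32$ elliptic curve with complex multiplication by $\mathbb{Z}[\I]$, so $a_p(f_{32.2.a.a})=0$ whenever $p$ is inert in $\mathbb{Q}(\I)$, i.e.\ whenever $p\equiv 3\imod 4$; the product term then vanishes and the right‑hand side collapses to $a_p(f_{8.4.a.a})+p$.
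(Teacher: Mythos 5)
Your top-level strategy is exactly the paper's: apply (\ref{cor_G_n=2_phi}) with $m=4$, $\lambda=1$ to write the ${}_8G_8$ as ${}_4G_4[\cdots\mid 1]_p+{}_4G_4[\cdots\mid -1]_p$, and then quote known evaluations of the two order-four values. However, you have misallocated the $+p$ between the two pieces, and as a result both of your intermediate targets are false as stated. The Ahlgren--Ono evaluation (their Theorem 6, which is what the paper cites) gives, in this normalization,
\[
{_{4}G_{4}}\Bigl[\tfrac12,\tfrac12,\tfrac12,\tfrac12;\,1,1,1,1 \,\Big|\, 1\Bigr]_p = a_p(f_{8.4.a.a})+p,
\]
not $a_p(f_{8.4.a.a})$ alone (Kilbourn's supercongruence concerns truncated series mod $p^3$ and plays no role here), while the McCarthy--Papanikolas evaluation at $-1$ is
\[
{_{4}G_{4}}\Bigl[\tfrac12,\tfrac12,\tfrac12,\tfrac12;\,1,1,1,1 \,\Big|\, -1\Bigr]_p = a_p(f_{32.2.a.a})\cdot a_p(f_{32.3.c.a}),
\]
with no $+p$ term. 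Your two errors cancel in the sum, so the final identity is right, but the proof as proposed does not go through: the half you single out as ``the substantive half'' asks you to establish $G(-1)=a_p(f_{32.2.a.a})\,a_p(f_{32.3.c.a})+p$, which is off by $p$ from the true value, so the point-count/zeta-factorization program you sketch there would necessarily fail, and your claimed value of $G(1)$ contradicts the very result of \cite{AO} you invoke. The fix is simply to attach the Tate-type $+p$ to the $\lambda=1$ evaluation and use \cite{MP} verbatim at $\lambda=-1$; no new motivic analysis is needed, since both evaluations are already in the literature.

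On the final reduction for $p\equiv 3 \imod 4$: your CM argument (the product vanishes because $a_p(f_{32.2.a.a})=0$ for $p$ inert in $\mathbb{Q}(\I)$) is correct and matches the paper's remark that both sides of the $\lambda=-1$ evaluation vanish in that case.
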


\begin{proof}
By a result of Ahlgren and Ono \cite[Thm.~6]{AO} we have, for $p$ odd,
\begin{equation}\label{cor_G_n=2_phi_m=4_for1}
{_{4}G_{4}}
\biggl[ \begin{array}{cccc}
\frac{1}{2} & \frac{1}{2} & \frac{1}{2} & \frac{1}{2} \\[2pt]
1 & 1 & 1 & 1 \end{array}
\Big| \; 1 \; \biggr]_p
=
a_{p}(f_{8.4.a.a})+p.
\end{equation}
In \cite{MP}, the first author and Papanikolas proved that, for $p$ odd,
\begin{equation}\label{cor_G_n=2_phi_m=4_for2}
{_{4}G_{4}}
\biggl[ \begin{array}{cccc}
\frac{1}{2} & \frac{1}{2} & \frac{1}{2} & \frac{1}{2} \\[2pt]
1 & 1 & 1 & 1 \end{array}
\Big| \; -1 \; \biggr]_p
=
a_{p}(f_{32.2.a.a}) \cdot a_{p}(f_{32.3.c.a}),
\end{equation}
and both sides of (\ref{cor_G_n=2_phi_m=4_for2}) are zero when $p \equiv 3 \imod 4$. 
Substituting (\ref{cor_G_n=2_phi_m=4_for1}) and (\ref{cor_G_n=2_phi_m=4_for2}) into (\ref{cor_G_n=2_phi}), when $m=4$ and $\lambda=1$, yields the result.
\end{proof}


\subsection{Classical Setting}\label{subsec_Apps_Classical}
Similar to the development of Corollaries \ref{cor_FF_n=2_m=2}-\ref{cor_FF_n=2_m=4} in the finite field setting, 
we can use Theorem \ref{thm_C_Main}, with $n=2$, to leverage known reduction formulas at lower orders to produce reduction formulas at higher orders in the classical setting.

\begin{cor}\label{cor_C_n=2_m=2}
For $\textup{Re}(b) < \frac{1}{4}$,
\begin{multline*}
{_4F_{4}} 
\biggl[ \begin{array}{cccc} 
a, & a +\frac{1}{2}, & b, & b +\frac{1}{2} \vspace{.05in}\\
1, & \frac{1}{2}, & \frac{1}{2}+a-b, & 1+a-b \end{array}
\Big| \; 1 \biggr]\\
=
\frac{1}{2}
\times
\left[
\frac{\gf{1+2a-2b} \gf{1-4b}}{\gf{1-2b} \gf{1+2a-4b}}
+
\frac{\gf{1+2a-2b} \gf{1+a}}{\gf{1+2a} \gf{1+a-2b}}
\right].
\end{multline*}
\end{cor}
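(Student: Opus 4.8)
The plan is to specialize the classical identity of Theorem~\ref{thm_C_Main} to $n=2$, $m=2$, and then recognize the two resulting ${}_{2}F_{1}$-series as instances of the Gauss and Kummer summation theorems.

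First I would apply Theorem~\ref{thm_C_Main} with $n=2$ (so $\xi_2=-1$), $a_1=a$, $a_2=b$, $b_1=\tfrac12$, $b_2=\tfrac12+a-b$ and $z=1$. Since $\{b_1,b_1+\tfrac12\}=\{\tfrac12,1\}$ and $\{b_2,b_2+\tfrac12\}=\{\tfrac12+a-b,\,1+a-b\}$, and a classical hypergeometric series is unchanged by permuting its numerator (respectively, denominator) parameters, the right-hand side of Theorem~\ref{thm_C_Main} is exactly twice the ${}_{4}F_{4}$ appearing in the corollary, while the left-hand side is
\[
{}_{2}F_{2}\biggl[\begin{array}{cc} 2a, & 2b \\ 1, & 1+2a-2b\end{array}\Big|\,1\biggr]
+
{}_{2}F_{2}\biggl[\begin{array}{cc} 2a, & 2b \\ 1, & 1+2a-2b\end{array}\Big|\,{-1}\biggr].
\]
Because one denominator parameter equals $1$, each ${}_{2}F_{2}$ collapses to ${}_{2}F_{1}(2a,2b;1+2a-2b;\pm1)$ via $\ph{1}{k}=k!$ (the reduction of ${}_{m}F_{m}$ to ${}_{m}F_{m-1}$ noted right after the definition of the classical series). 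So the corollary reduces to evaluating these two Gauss series in closed form.

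For argument $+1$, Gauss's summation theorem yields $\gf{1+2a-2b}\,\gf{1-4b}\big/\bigl(\gf{1-2b}\,\gf{1+2a-4b}\bigr)$, valid precisely when $\textup{Re}(1-4b)>0$, i.e.\ $\textup{Re}(b)<\tfrac14$, which is exactly the hypothesis of the corollary. For argument $-1$, the denominator parameter $1+2a-2b$ has the Kummer shape $1+(2a)-(2b)$, so Kummer's theorem yields $\gf{1+2a-2b}\,\gf{1+a}\big/\bigl(\gf{1+2a}\,\gf{1+a-2b}\bigr)$, the hypothesis $\textup{Re}(b)<\tfrac14$ being more than enough for the (weaker) convergence requirement at $z=-1$. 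Adding these two evaluations and halving gives the stated identity.

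No real obstacle arises here: this is a direct specialization of Theorem~\ref{thm_C_Main} followed by two textbook ${}_{2}F_{1}$ summations. The only points requiring attention are the cosmetic reordering of the denominator parameters so as to match the form displayed in the corollary, and confirming that the imposed condition $\textup{Re}(b)<\tfrac14$ is exactly what Gauss's theorem needs (and hence automatically licenses the Kummer evaluation as well).
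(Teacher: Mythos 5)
Your proposal is correct and follows essentially the same route as the paper: apply Theorem \ref{thm_C_Main} with $n=m=2$, $\lambda=1$, $a_1=a$, $a_2=b$, $b_1=\tfrac12$, $b_2=\tfrac12+a-b$, then evaluate the resulting series at $+1$ by Gauss's theorem and at $-1$ by Kummer's theorem. Your added remarks (the reduction of ${_2F_2}$ with a denominator parameter $1$ to a standard ${_2F_1}$, and the check that $\textup{Re}(b)<\tfrac14$ is exactly the Gauss condition) just make explicit what the paper leaves implicit.
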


\begin{proof}
We use Theorem \ref{thm_C_Main} with $n=m=2$, $\lambda=1$, $a_1=a$, $a_2=b$, $b_1=\frac{1}{2}$ and $b_2=\frac{1}{2}+a-b$ 
to split the ${_4}F_{4}$ in the statement of the corollary into ${_{2}F_{2}}\left[\cdots\mid+1\right]+{_{2}F_{2}}\left[\cdots\mid-1\right]$. 
The ${_{2}F_{2}}\left[\cdots\mid+1\right]$ can be reduced by Gauss' theorem \cite[p.~2]{Ba} and the ${_{2}F_{2}}\left[\cdots\mid-1\right]$ by Kummer's theorem \cite[p.~9]{Ba}, giving the desired result.
\end{proof}

Corollary \ref{cor_C_n=2_m=2} is a direct analogue of Corollary \ref{cor_FF_n=2_m=2} and is already known. It appears (without proof) in \cite[eqn.~11, p.~555]{PBM}.

\begin{cor}\label{cor_C_n=2_m=3}
\begin{multline*}
{_6F_{6}} 
\biggl[ \begin{array}{cccccc} 
\frac{1}{4}, & \frac{3}{4}, & \frac{1}{4}+a, & \frac{3}{4} + a, & \frac{1}{4}-a, & \frac{3}{4} - a \vspace{.05in}\\
1, & \frac{1}{2}, & 1-a, & \frac{1}{2}-a, & 1+a, & \frac{1}{2}+a \end{array}
\Big| \; 1 \biggr]\\
=
\frac{\gf{1+2a} \gf{1-2a}}{4 \pi}
\left[
\frac{\gf{\frac{1}{4}}^2}{\gf{\frac{3}{4}-2a} \gf{\frac{3}{4}+2a}}
+
\frac{\sqrt{2} \, \pi^2}{\gf{a+\frac{5}{8}} \gf{a+\frac{7}{8}} \gf{-a+\frac{5}{8}} \gf{-a+\frac{7}{8}} }
\right].
\end{multline*}
\end{cor}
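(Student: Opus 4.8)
The plan is to run the $n=2$ instance of Theorem \ref{thm_C_Main} and then invoke classical ${}_3F_2$ summation theorems, exactly as in Corollaries \ref{cor_C_n=2_m=2} and \ref{cor_FF_n=2_m=3}. I would apply Theorem \ref{thm_C_Main} with $n=2$, $m=3$, argument $z=1$, and
\[
a_1=\tfrac14,\ \ a_2=\tfrac14+a,\ \ a_3=\tfrac14-a,\qquad b_1=\tfrac12,\ \ b_2=\tfrac12-a,\ \ b_3=\tfrac12+a .
\]
The parameter families $\{a_i+\tfrac l2:1\le i\le3,\,0\le l\le1\}$ and $\{b_i+\tfrac l2\}$ are precisely the numerator and denominator parameters of the ${}_6F_6$ in the statement, and since $2b_1=1$ the two ${}_3F_3$'s produced on the left of Theorem \ref{thm_C_Main} are genuinely ${}_3F_2$'s. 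Hence the left-hand side of the corollary equals
\[
\tfrac12\Bigl[\,{}_3F_2\!\bigl(\tfrac12,\tfrac12+2a,\tfrac12-2a;\ 1-2a,\ 1+2a\,;\ 1\bigr)+{}_3F_2\!\bigl(\tfrac12,\tfrac12+2a,\tfrac12-2a;\ 1-2a,\ 1+2a\,;\ -1\bigr)\Bigr].
\]

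For the series at $+1$, note that it is well-poised: the numerator parameter $\tfrac12$ satisfies $1+\tfrac12=(\tfrac12+2a)+(1-2a)=(\tfrac12-2a)+(1+2a)$. So Dixon's theorem \cite{Ba} applies, and the gamma-arguments it produces are $\tfrac54,\ \tfrac14,\ \tfrac34\mp2a$ (numerator side) and $\tfrac32,\ \tfrac12$ (denominator side). Using $\Gamma(\tfrac54)=\tfrac14\Gamma(\tfrac14)$, $\Gamma(\tfrac32)=\tfrac12\sqrt\pi$ and $\Gamma(\tfrac12)=\sqrt\pi$ collapses Dixon's evaluation to $\dfrac{\Gamma(1-2a)\,\Gamma(1+2a)\,\Gamma(\tfrac14)^2}{2\pi\,\Gamma(\tfrac34-2a)\,\Gamma(\tfrac34+2a)}$, half of which is precisely the first term of the claim. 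The balancing condition $\mathrm{Re}(\tfrac14)>0$ holds for every $a$, so no restriction on $a$ is needed here.

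For the series at $-1$, I would first apply the quadratic transformation of a well-poised ${}_3F_2$ at argument $-1$ \cite{Ba},
\[
{}_3F_2\!\bigl(\alpha,\beta,\gamma;\,1+\alpha-\beta,\,1+\alpha-\gamma\,;\,-1\bigr)=2^{-\alpha}\,{}_3F_2\!\bigl(\tfrac{\alpha}{2},\tfrac{\alpha+1}{2},1+\alpha-\beta-\gamma;\,1+\alpha-\beta,\,1+\alpha-\gamma\,;\,1\bigr),
\]
with $\alpha=\tfrac12$, $\beta=\tfrac12+2a$, $\gamma=\tfrac12-2a$. The decisive point is that $\beta+\gamma=1$, so $1+\alpha-\beta-\gamma=\tfrac12$ and the right-hand side becomes $\tfrac1{\sqrt2}\,{}_3F_2\!\bigl(\tfrac14,\tfrac34,\tfrac12;\,1-2a,\,1+2a\,;\,1\bigr)$. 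Because $\tfrac14+\tfrac34=1$ and $(1-2a)+(1+2a)=2=1+2\cdot\tfrac12$, this series matches the $(a',1-a',c';e',1+2c'-e')$ shape of Whipple's theorem \cite{Ba}, which sums it to $\dfrac{\pi\,\Gamma(1-2a)\,\Gamma(1+2a)}{\Gamma(\tfrac58-a)\,\Gamma(\tfrac58+a)\,\Gamma(\tfrac78-a)\,\Gamma(\tfrac78+a)}$ (the factor $2^{2c'-1}=2^0=1$ disappears). Multiplying by $\tfrac1{\sqrt2}$ and halving, and using $\tfrac1{2\sqrt2}=\tfrac{\sqrt2}{4}$, yields exactly the second term of the claim. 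Adding the two contributions completes the proof.

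I do not expect a serious obstacle, since every ingredient (Dixon, the well-poised quadratic transformation, Whipple's ${}_3F_2$ summation) is classical and the relevant convergence conditions, $\mathrm{Re}(\tfrac14)>0$ and $\mathrm{Re}(\tfrac12)>0$, are automatic. The one move that requires genuine care — the "hard part" in the sense of not being forced — is recognizing that after the substitution one has $\beta+\gamma=1$: it is this identity that makes $1+\alpha-\beta-\gamma$ collapse to $\tfrac12$ and thereby drops the transformed series into the narrow family to which Whipple's summation applies, and it is also what makes the unexpected shifts $\tfrac58,\tfrac78$ and the factor $\sqrt2$ appear. Everything after that is routine gamma-function bookkeeping.
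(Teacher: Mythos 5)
Your proposal is correct, and its skeleton coincides with the paper's: you invoke Theorem \ref{thm_C_Main} with $n=2$, $m=3$, $z=1$ and exactly the same parameter assignment, which splits the ${_6F_6}$ into $\tfrac12\bigl[{_3F_2}(\tfrac12,\tfrac12+2a,\tfrac12-2a;1-2a,1+2a;1)+{_3F_2}(\cdots;-1)\bigr]$, and you sum the $+1$ series by Dixon's theorem, just as the paper does; your collapsed Dixon value $\Gamma(1-2a)\Gamma(1+2a)\Gamma(\tfrac14)^2/\bigl(2\pi\,\Gamma(\tfrac34-2a)\Gamma(\tfrac34+2a)\bigr)$ is right (your parenthetical sorting of which gamma factors sit in the numerator versus denominator is slightly garbled, but harmless). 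The only genuine divergence is the $-1$ series: the paper evaluates it by combining two relations from Whipple's 1926 paper, eqns (9.3) and (10.3), to obtain the closed form recorded as (\ref{cor_C_n=2_m=3_for1}), whereas you pass through the $z=-1$ case of the classical quadratic transformation ${_3F_2}(\alpha,\beta,\gamma;1+\alpha-\beta,1+\alpha-\gamma;z)=(1-z)^{-\alpha}\,{_3F_2}\bigl(\tfrac{\alpha}{2},\tfrac{\alpha+1}{2},1+\alpha-\beta-\gamma;1+\alpha-\beta,1+\alpha-\gamma;-4z/(1-z)^2\bigr)$ and then apply Whipple's ${_3F_2}(1)$ summation, correctly exploiting $\beta+\gamma=1$ so that the third numerator parameter collapses to $\tfrac12$ and the series falls into Whipple's family. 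Both routes produce the identical value $\pi\,\Gamma(1+2a)\Gamma(1-2a)/\bigl(\sqrt2\,\Gamma(\tfrac58+a)\Gamma(\tfrac78+a)\Gamma(\tfrac58-a)\Gamma(\tfrac78-a)\bigr)$, so the proofs are equivalent in substance; your version has the small advantage of using only the two standard named summation theorems plus one standard transformation from Bailey's tract, while the paper's is shorter on the page because it outsources the $-1$ evaluation to Whipple's own two-term relations.
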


\begin{proof}
Combining Whipple's \cite[eqn.~(9.3)]{Wh} and \cite[eqn.~(10.3)]{Wh} we get that
\begin{equation}\label{cor_C_n=2_m=3_for1}
{_3F_{3}} 
\biggl[ \begin{array}{ccc} 
\frac{1}{2}, & \frac{1}{2}+a, & \frac{1}{2}-a \vspace{.05in}\\
1, & 1+a, & 1-a \end{array}
\Big| \; -1 \biggr]
=
\frac{\pi \, \gf{1+a} \gf{1-a}}{\sqrt{2} \,\gf{\frac{a}{2}+\frac{5}{8}} \gf{\frac{a}{2}+\frac{7}{8}} \gf{-\frac{a}{2}+\frac{5}{8}} \gf{-\frac{a}{2}+\frac{7}{8}}}.
\end{equation}
We now use Theorem \ref{thm_C_Main} with $n=2$, $m=3$, $\lambda=1$, $a_1=\frac{1}{4}$, $a_2=\frac{1}{4}+a$, $a_3=\frac{1}{4}-a$, $b_1=\frac{1}{2}$, $b_2=\frac{1}{2}-a$ and $b_3=\frac{1}{2}+a$ 
to split the ${_6}F_{6}$ in the statement of the corollary into ${_{3}F_{3}}\left[\cdots\mid+1\right]+{_{3}F_{3}}\left[\cdots\mid-1\right]$. 
The ${_{3}F_{3}}\left[\cdots\mid+1\right]$ can be evaluated using Dixon's theorem \cite[eqn.~(10.1)]{Wh} and the ${_{2}F_{2}}\left[\cdots\mid-1\right]$ by (\ref{cor_C_n=2_m=3_for1}), giving the desired result.
\end{proof}

Corollary \ref{cor_C_n=2_m=3} is a direct analogue of Corollary \ref{cor_FF_n=2_m=3}. We are conscious of the fact that it is unlikely such a result is not already known. However, we haven't found Corollary \ref{cor_C_n=2_m=3} in the literature.

\begin{cor}\label{cor_C_n=2_m=4}
For $\textup{Re}(b) < \frac{1}{8}$,
\begin{multline*}
{_8F_{8}} 
\biggl[ \begin{array}{cccccccc} 
a, & a +\frac{1}{2}, & a +\frac{1}{4}, & a +\frac{3}{4}, & b, & b +\frac{1}{2}, & b +\frac{1}{4}, & b +\frac{3}{4} \vspace{.05in}\\
1, & \frac{1}{2}, & \frac{1}{4}, & \frac{3}{4}, & \frac{1}{2}+a-b, & 1+a-b, & \frac{1}{4}+a-b, & \frac{3}{4}+a-b \end{array}
\Big| \; 1 \biggr]
\\[3pt]
=
\frac{1}{4}
\times
\left[
\frac{\gf{1+4a-4b} \gf{1-8b}}{\gf{1-4b} \gf{1+4a-8b}}
+
\frac{\gf{1+4a-4b} \gf{1+2a}}{\gf{1+4a} \gf{1+2a-4b}}
\right]
\\[3pt]
+
\frac{\gf{1+2a-2b} \, \gf{\frac{1}{2}+2a-2b}}{2 \, \gf{1+2a} \,\gf{\frac{1}{2}+2a-4b}} \;
{_{3}F_2} \biggl[ \begin{array}{ccc} 
\frac{1}{2}-a, &2b, & 2b +\frac{1}{2}\vspace{.05in}\\
\phantom{\frac{1}{2}-a} & 1+a, & \frac{1}{2} \end{array}
\Big| \; 1 \biggr].
\end{multline*}
\end{cor}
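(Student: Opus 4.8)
The plan is to mirror the proof of the finite-field Corollary~\ref{cor_FF_n=2_m=4}. First I would apply Theorem~\ref{thm_C_Main} with $n=2$, $m=4$, the parameters $(a_1,a_2,a_3,a_4)=(a,\,a+\tfrac14,\,b,\,b+\tfrac14)$, $(b_1,b_2,b_3,b_4)=(\tfrac12,\,\tfrac14,\,\tfrac12+a-b,\,\tfrac14+a-b)$, and $z=1$. The eight pairs $\{a_i,a_i+\tfrac12\}$ and $\{b_i,b_i+\tfrac12\}$ are precisely the numerator and denominator parameters of the $_8F_8$ in the statement (up to reordering), so the theorem gives that this $_8F_8$ equals $\tfrac12F_{+}+\tfrac12F_{-}$, where
\[
F_{\pm}:={_4F_4}\!\left[\begin{array}{cccc} 2a, & 2a+\tfrac12, & 2b, & 2b+\tfrac12 \\ 1, & \tfrac12, & \tfrac12+2a-2b, & 1+2a-2b \end{array}\Big|\,\pm1\right].
\]
Now $F_{+}$ is the left-hand side of Corollary~\ref{cor_C_n=2_m=2} with $(a,b)$ there replaced by $(2a,2b)$, and applying that corollary yields exactly the first bracketed term $\tfrac14[\cdots]$ in the present statement; the hypothesis $\mathrm{Re}(b)<\tfrac18$ is precisely the hypothesis $\mathrm{Re}(2b)<\tfrac14$ required there (equivalently, the convergence condition for Gauss's theorem applied to the $_2F_1(4a,4b;1+4a-4b;1)$ that produces the first Gamma quotient).

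It then remains to prove
\[
F_{-}=\frac{\gf{1+2a-2b}\,\gf{\tfrac12+2a-2b}}{\gf{1+2a}\,\gf{\tfrac12+2a-4b}}\;
{_3F_2}\!\left[\begin{array}{ccc} \tfrac12-a, & 2b, & 2b+\tfrac12 \\ \phantom{\tfrac12-a} & 1+a, & \tfrac12 \end{array}\Big|\,1\right],
\]
which is the classical analogue of \cite[Thm.~1.5]{McC6}. My route would be: apply $(x)_k(x+\tfrac12)_k=4^{-k}(2x)_{2k}$ to all four numerator pairs and to the denominator pairs $(1,\tfrac12)$ and $(\tfrac12+2a-2b,\,1+2a-2b)$, which collapses $F_{-}$ to $\sum_{k\ge0}\frac{(4a)_{2k}(4b)_{2k}}{(2k)!\,(1+4a-4b)_{2k}}(-1)^k$, the even part at $z=\I$ of the well-poised Gauss series $_2F_1(4a,4b;1+4a-4b;z)$, i.e. $\tfrac12\bigl[{_2F_1}(4a,4b;1+4a-4b;\I)+{_2F_1}(4a,4b;1+4a-4b;-\I)\bigr]$ (this collapse is itself Theorem~\ref{thm_C_Main} applied once more, with $n=2$, $m=2$). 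Then use the well-poised quadratic transformation ${_2F_1}(\alpha,\beta;1+\alpha-\beta;z)=(1+z)^{-\alpha}{_2F_1}(\tfrac\alpha2,\tfrac{\alpha+1}2;1+\alpha-\beta;\tfrac{4z}{(1+z)^2})$ with $\alpha=4a$, $\beta=4b$; since $\tfrac{4\I}{(1+\I)^2}=\tfrac{-4\I}{(1-\I)^2}=2$ and $\tfrac12\bigl[(1+\I)^{-4a}+(1-\I)^{-4a}\bigr]=2^{-2a}\cos(\pi a)$, this gives $F_{-}=2^{-2a}\cos(\pi a)\,{_2F_1}(2a,2a+\tfrac12;1+4a-4b;2)$. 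Finally one identifies this $_2F_1$ of argument $2$ — whose numerator parameters differ by $\tfrac12$ — with the displayed $_3F_2$ at $1$ times the Gamma prefactor, using a further quadratic transformation (the one valid for $_2F_1(c,c+\tfrac12;\cdot;\cdot)$, which moves the argument off $2$) followed by the $z\mapsto1-z$ connection formula about $z=1$, and recombining the resulting series so that the pair $(2b)_k(2b+\tfrac12)_k$ re-emerges.

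I expect this last identification to be the only genuinely delicate point: passing from a $_2F_1$ at argument $2$ (equivalently, the real part of a well-poised $_2F_1$ on the imaginary axis) to a $_3F_2$ at $1$ with an explicit Gamma prefactor. If a bookkeeping-only derivation proves awkward, the cleanest alternative is simply to quote this formula from the classical literature: it is the direct companion of Corollary~\ref{cor_C_n=2_m=2}, in the same spirit as the formulas used for Corollaries~\ref{cor_C_n=2_m=2} and \ref{cor_C_n=2_m=3}, and should either be tabulated in \cite{PBM} or follow from Bailey's catalogue of quadratic transformations. Substituting the evaluations of $F_{+}$ and $F_{-}$ back into $\tfrac12F_{+}+\tfrac12F_{-}$ then gives the corollary.
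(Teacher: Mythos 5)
Your decomposition is exactly the paper's: the same application of Theorem \ref{thm_C_Main} with $n=2$, $m=4$, $\lambda=1$ and parameters $(a,\,a+\tfrac14,\,b,\,b+\tfrac14;\ \tfrac12,\,\tfrac14,\,\tfrac12+a-b,\,\tfrac14+a-b)$, followed by the same evaluation of the $+1$ piece via Corollary \ref{cor_C_n=2_m=2} with $(a,b)\mapsto(2a,2b)$ (and your reading of the hypothesis $\mathrm{Re}(b)<\tfrac18$ is right). The only divergence is the $-1$ piece: the paper simply cites Whipple \cite[eqn.~(3.4)]{Wh}, which transforms this well-poised series at $-1$ into the displayed ${_3F_2}(1)$ with the Gamma prefactor --- precisely the classical companion of \cite[Thm.~1.5]{McC6} that you correctly guessed should exist in the literature --- whereas you sketch a derivation. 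Your sketch is sound up to writing $F_-$ as the average of ${_2F_1}(4a,4b;1+4a-4b;\pm\I)$ and invoking the quadratic transformation, but the step ``$F_-=2^{-2a}\cos(\pi a)\,{_2F_1}(2a,2a+\tfrac12;1+4a-4b;2)$'' is not valid as stated: the argument $4z/(1+z)^2$ lands at $2$, on the cut $[1,\infty)$, and the two terms $z=\pm\I$ produce the two distinct boundary values ${_2F_1}(\cdots;2\pm\I 0)$; for real parameters the correct statement is $F_-=2^{-2a}\,\mathrm{Re}\bigl[e^{-\pi\I a}\,{_2F_1}(2a,2a+\tfrac12;1+4a-4b;2+\I 0)\bigr]$, so a $\sin(\pi a)\cdot\mathrm{Im}$ term survives and the cosine cannot be factored out of a single real value. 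A self-contained derivation along your lines would have to keep both phases $e^{\mp\pi\I a}$ attached to the respective boundary values through the connection formulas. Since your stated fallback --- quoting the classical evaluation --- is exactly what the paper does, your argument with Whipple's (3.4) substituted for the flawed detour is the paper's proof.
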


\begin{proof}
Again, we use Theorem \ref{thm_C_Main}, this time with $n=2$, $m=4$, $\lambda=1$, $a_1=a$, $a_2=a+\frac{1}{4}$, $a_3=b$, $a_4=b+\frac{1}{4}$, $b_1=\frac{1}{2}$, $b_2=\frac{1}{4}$, $b_3=\frac{1}{2}+a-b$ and $b_4=\frac{1}{4}+a-b$.
This splits the ${_8}F_{8}$ in the statement of the corollary into ${_{4}F_{4}}\left[\cdots\mid+1\right]+{_{4}F_{4}}\left[\cdots\mid-1\right]$. 
The ${_{4}F_{4}}\left[\cdots\mid+1\right]$ can be evaluated using Corollary \ref{cor_C_n=2_m=2} and the ${_{4}F_{4}}\left[\cdots\mid-1\right]$ by a result of Whipple \cite[eqn.~(3.4)]{Wh}, giving the desired result.
\end{proof}

Corollary \ref{cor_C_n=2_m=4} is a direct analogue of Corollary \ref{cor_FF_n=2_m=4} and, again, we haven't found it in the literature.


\section{More on the Relationship between $F(\cdots)_q$ and $G[\cdots]_q$}\label{sec_GF}
We recall the discussion at the end of Section \ref{subsec_Res_FF} and beginning of Section \ref{subsec_Res_padic} regarding the domains of $F(\cdots)_q$ and $G[\cdots]_q$, when considered as functions of $q$.
Let $\mathbb{F}_q^{*} = \langle T \rangle$.
For fixed $a_i, b_i \in \mathbb{Q}$, if we consider $_mF_m(\{T^{a_i (q-1)}\}; \{T^{b_i(q-1)}\} \mid \cdot \,)_q$ to be a function of $q$, then the domain is all $q \equiv 1 \imod d$, where $d$ is the least common denominator of the elements in $\{a_i\} \cup \{b_i\}$, ensuring $a_i (q-1), b_i (q-1) \in \mathbb{Z}$. 
Via Lemma \ref{lem_G_to_F}, $_mF_m(\{T^{a_i (q-1)}\}; \{T^{b_i(q-1)}\} \mid \cdot \,)_q$ extends to ${_{m}G_{m}}[\{a_i\}; \{b_i\} \mid \cdot \,]_q$, whose domain is all $q$ relatively prime to $d$.

It is possible to extend the domain of $_mF_m(\{T^{a_i (q-1)}\}; \{T^{b_i(q-1)}\} \mid \cdot \,)_q$ without moving to the $p$-adic setting, in certain circumstances. Specifically, if the parameters $(\{a_i\}, \{b_i\})$, or some subset thereof, are defined over $\mathbb{Q}$. Using the same notation as in Section \ref{subsec_Res_padic}, if $(\{a_i\}, \{b_i\})$ are defined over $\mathbb{Q}$ with corresponding exponents $(\{p_i : 1 \leq i \leq t \}, \{q_i  : 1 \leq i \leq s \})$, then \cite{BCM}, for $A_i = T^{a_i (q-1)}$ and $T^{b_i(q-1)}$,
\begin{multline}\label{def_HypFnFF_Q}
{_{m}F_{m}} {\biggl( \begin{array}{cccc} 
A_1, & A_2, & \dotsc, & A_m \\
B_1, & B_2, & \dotsc, & B_m \end{array}
\Big| \; \lambda \biggr)}_{q}\\
=
\frac{(-1)^{t+s+1}}{q-1}
\sum_{j=0}^{q-2}
q^{-\mathfrak{s}(0)+\mathfrak{s}(-j)}
\prod_{i=1}^{t} g(T^{jp_i})
\prod_{i=1}^{s} g(T^{-jq_i})
\cdot 
T^j((-1)^{m+\delta} M^{-1} \lambda).
\end{multline}
We can use (\ref{def_HypFnFF_Q}) as the definition of $_mF_m(\{T^{a_i (q-1)}\}; \{T^{b_i(q-1)}\} \mid \lambda \,)_q$ if $(\{a_i\}, \{b_i\})$ are defined over $\mathbb{Q}$,  
and then its domain is all $q$ relatively prime to $d$, the same as ${_{m}G_{m}}[\{a_i\}; \{b_i\} \mid \cdot \,]_q$. 
Letting $T= \bar{\omega}$ in (\ref{def_HypFnFF_Q}), then using the Gross-Koblitz formula, Theorem \ref{thm_GrossKoblitz}, followed by the change of variable $j \to (q-1)-j$, we get, via Theorem \ref{thm_GQ}, that 
$_mF_m(\{T^{a_i (q-1)}\}; \{T^{b_i(q-1)}\} \mid \lambda \,)_q = {_{m}G_{m}}[\{a_i\}; \{b_i\} \mid \lambda^{-1} \,]_q$.
So, if $(\{a_i\}, \{b_i\})$ are defined over $\mathbb{Q}$, then $_mF_m(\{T^{a_i (q-1)}\}; \{T^{b_i(q-1)}\} \mid \cdot \,)_q$, as defined by (\ref{def_HypFnFF_Q}), and ${_{m}G_{m}}[\{a_i\}; \{b_i\} \mid \cdot \,]_q$ are equivalent, up to inversion of the argument $\lambda$. \textbf{Therefore, Theorem \ref{thm_G_Main}; Corollary \ref{cor_G_n=2}; equation (\ref{cor_G_n=2_phi}); and, Corollaries \ref{cor_phi2}-\ref{cor_G_n=2_phi_m=4} all still hold with ${_{m}G_{m}}[\{a_i\}; \{b_i\} \mid \lambda \,]_q$ replaced by $_mF_m(\{T^{a_i (q-1)}\}; \{T^{b_i(q-1)}\} \mid \lambda^{-1} \,)_q$, as defined by (\ref{def_HypFnFF_Q}).}

The case where some, but not necessarily all, of the $(\{a_i\}, \{b_i\})$ parameters are defined over $\mathbb{Q}$ was considered by Doran et al. in \cite{DKSSVW}. Let $S_a$ and $S_b$ be submultisets of $\{a_i\}$ and $\{b_i\}$, respectively, such that
\begin{equation*}
\frac{\prod_{a \in S_a} x- e^{2 \pi \I a}}{\prod_{b \in S_b} x- e^{2 \pi \I b}} = \frac{\prod_{i=1}^{t} x^{p_i} - 1}{\prod_{i=1}^{s} x^{q_i} - 1},
\end{equation*}
for some positive integers $p_1, p_2, \dots, p_t$ and $q_1, q_2, \dots, q_s$, with $t$ and $s$ minimal. We say $(S_a, S_b)$ are defined over $\mathbb{Q}$ and we use the the same notation 
$D(x), \delta, \mathfrak{s}(c)$ and $M$, as defined in Section \ref{subsec_Res_padic}.
Let $S_a^{'}=\{a_i\} \setminus S_a$ and $S_b^{'}=\{b_i\} \setminus S_b$.
Then \cite{DKSSVW}, for $A_i = T^{a_i (q-1)}$ and $T^{b_i(q-1)}$,
\begin{multline}\label{def_HypFnFF_split}
{_{m}F_{m}} {\biggl( \begin{array}{cccc} 
A_1, & A_2, & \dotsc, & A_m \\
B_1, & B_2, & \dotsc, & B_m \end{array}
\Big| \; \lambda \biggr)}_{q}\\
\shoveleft \qquad \qquad
=
\frac{(-1)^{t+s+1}}{q-1}
\sum_{j=0}^{q-2}
q^{-\mathfrak{s}(0)+\mathfrak{s}(-j)}
\prod_{i=1}^{t} g(T^{jp_i})
\prod_{i=1}^{s} g(T^{-jq_i})\\
\times
\prod_{a \in S_a^{'}} \frac{g(T^{j+a(q-1)})}{g(T^{a(q-1)})}
\prod_{b \in S_b^{'}} \frac{g(T^{-j-b(q-1)})}{g(T^{-b(q-1)})}
\cdot 
T^j((-1)^{m+\delta} M^{-1} \lambda).
\end{multline}
So, we can use (\ref{def_HypFnFF_split}) as the definition of $_mF_m(\{T^{a_i (q-1)}\}; \{T^{b_i(q-1)}\} \mid \lambda \,)_q$ if $(S_a, S_b)$ are defined over $\mathbb{Q}$,
 and then its domain is all $q$, such that $q$ is relatively prime to $d_1$, where $d_1$ is the least common denominator of of the elements in $S_a \cup S_b$, \textbf{and}, $q \equiv 1 \imod {d_2}$, where $d_2$ is the least common denominator of of the elements in $S_a^{'} \cup S_b^{'}$.
If $(\{a_i\}, \{b_i\})$ are defined over $\mathbb{Q}$, then, taking $S_a = \{a_i\}$ and $S_b=\{b_i\}$, we recover (\ref{def_HypFnFF_Q}).

If $(S_a, S_b)$ are defined over $\mathbb{Q}$, it is straightforward to show, using the same approach as in the proof of Theorem \ref{thm_GQ}, that 
\begin{multline}\label{def_G_split}
{_{m}G_{m}}
\biggl[ \begin{array}{cccc} a_1, & a_2, & \dotsc, & a_m \\[2pt]
 b_1, & b_2, & \dotsc, & b_m \end{array}
\Big| \; \lambda \; \biggr]_q
= 
\frac{-1}{q-1}  \sum_{j=0}^{q-2} 
(-1)^{j(m+\delta)}\;
q^{-\mathfrak{s}(0)+\mathfrak{s}(j)}\;
\bar{\omega}^j(M \cdot \lambda)\\
\times
\prod_{k=0}^{r-1} 
\prod_{i=1}^{t}  
\biggfp{\langle \tfrac{-j p_i}{q-1} p^k \rangle}
(-p)^{-\lfloor{ \tfrac{-j p_i}{q-1} p^k} \rfloor}
\prod_{i=1}^{s}  
\biggfp{\langle \tfrac{j q_i}{q-1} p^k \rangle}
(-p)^{-\lfloor{ \tfrac{j q_i}{q-1} p^k} \rfloor}\\
\times
\prod_{a \in S_a^{'}}
\frac{\biggfp{\langle (a -\frac{j}{q-1} )p^k \rangle}}{\biggfp{\langle a p^k \rangle}}
(-p)^{-\lfloor{\langle a p^k \rangle -\frac{j p^k}{q-1}}\rfloor}
\prod_{b \in S_b^{'}}
\frac{\biggfp{\langle (-b +\frac{j}{q-1}) p^k \rangle}}{\biggfp{\langle -b p^k\rangle}}
(-p)^{-\lfloor{\langle -b_i p^k\rangle +\frac{j p^k}{q-1}}\rfloor}.
\end{multline}
Again, letting $T= \bar{\omega}$ in (\ref{def_HypFnFF_split}), then using the Gross-Koblitz formula, Theorem \ref{thm_GrossKoblitz}, followed by the change of variable $j \to (q-1)-j$, we get, via (\ref{def_G_split}), that 
$_mF_m(\{T^{a_i (q-1)}\}; \{T^{b_i(q-1)}\} \mid \lambda \,)_q = {_{m}G_{m}}[\{a_i\}; \{b_i\} \mid \lambda^{-1} \,]_q$.
However, in this case, if the full $(\{a_i\}, \{b_i\})$ are not defined over $\mathbb{Q}$, then the domain of $_mF_m(\{T^{a_i (q-1)}\}; \{T^{b_i(q-1)}\} \mid \cdot \,)_q$ is smaller than that of 
${_{m}G_{m}}[\{a_i\}; \{b_i\} \mid \cdot \,]_q$.

This is the reason we use $G[\cdots]_q$. While $G[\cdots]_q$ will coincide with $F(\cdots)_q$ (up to inversion of the argument $\lambda$) on any domain on which $F(\cdots)_q$ is defined, $G[\cdots]_q$ will always be defined on the largest domain possible, i.e., when $q$ is relatively prime to $d$, where $d$ is the least common denominator of the elements in $\{a_i\} \cup \{b_i\}$.



\vspace{9pt}

\end{document}